\newtheorem{theorem}{Theorem}[section]
\newtheorem{definition}[theorem]{Definition}
\newtheorem{lemma}[theorem]{Lemma}
\newtheorem{proposition}[theorem]{Proposition}
\newtheorem{corollary}[theorem]{Corollary}
\newtheorem{problem}[theorem]{Problem}
\newcommand{\Rn}{\mathbb{R}^n}
\newcommand{\R}{\mathbb{R}}
\newcommand{\sphere}{S^{n-1}}
\newcommand{\ball}{B^n_2} 
\newcommand{\dom}{\mathrm{dom~}}
\newcommand{\gen}{\Psi_{\alpha}}
\newcommand{\CA}{\mathcal{C}^+_{\alpha}}
\newcommand{\Ja}{J}
\newcommand{\JD}{\delta J_{\alpha} }
\newcommand{\conv}{\mathrm{Conv}}
\newcommand{\dpsi}{D_{\psi}}
\newcommand{\dwpsi}{D_{\widetilde\psi}}
\newcommand\numberthis{\addtocounter{equation}{1}\tag{\theequation}}
\newcommand{\mres}{\mathbin{\vrule height 1.6ex depth 0pt width 0.13ex\vrule height 0.13ex depth 0pt width 1.3ex}}
\numberwithin{equation}{section}
\begin{document}
\title[A Minkowski problem for  $\alpha$-concave functions via optimal transport]{A Minkowski problem for  $\alpha$-concave functions via optimal transport} 
\author[Li]{Xiao Li} \author[Nguyen]{Nguyen Dac Khoi Nguyen}   \author[Ye]{Deping Ye}
\date{\today}

\address{Xiao Li, School of Mathematical Science, Chongqing Normal University, Chongqing, 401331, China}
\email{lxlixiaolx@163.com}
\address{Nguyen Dac Khoi Nguyen, Department of Mathematics and Statistics, Memorial University of Newfoundland, St.
John’s, Newfoundland, A1C 5S7, Canada}
\email{ndknguyen@mun.ca}
\address{Deping Ye, Department of Mathematics and Statistics, Memorial University of Newfoundland, St.
John’s, Newfoundland, A1C 5S7, Canada}
\email{deping.ye@mun.ca}

\makeatletter
\newcommand*\bigcdot{\mathpalette\bigcdot@{0.7}}
\newcommand*\bigcdot@[2]{\mathbin{\vcenter{\hbox{\scalebox{#2}{$\m@th#1\bullet$}}}}}
\makeatother

 \let\thefootnote\relax
\footnote{Key words: $\alpha$-concave function, $\alpha$-concave measure, Surface area measure, Minkowski problems, Optimal transport} 

\begin{abstract} The notions of the Euclidean surface area measure and the spherical surface area measure of $\alpha$-concave functions in $\Rn$,  with $-\frac{1}{n}<\alpha<0$, are introduced via a first variation of the total mass functional with respect to the $\alpha$-sum operation. Subsequently, these notions are extended to  those for $\alpha$-concave measures. We then study the Minkowski problem associated with the Euclidean surface area measures of $\alpha$-concave measures via optimal transport.

\vskip 2mm \noindent Mathematics Subject Classification (2020):  26B25 (primary),  52A40, 52A41, 35G20, 31B99.
\end{abstract}
\maketitle

\section{Introduction}
To begin our discussion, let $-\infty< \alpha< +\infty$ be a fixed constant. A function $f:\Rn\to [0,+\infty)$ is said to be \textit{$\alpha$-concave} if $\mathrm{Supp}(f)$, the support  of $f$, is convex, and for any $x,y\in \mathrm{Supp}(f)$ and $\lambda\in [0,1]$,
\begin{align}
   f(\lambda x+(1-\lambda)y)&\geq \left( \lambda f(x)^{\alpha}+(1-\lambda)f(y)^{\alpha}\right)^{\frac{1}{\alpha}}, \ \ \mathrm{for} \ \ \alpha\neq 0; \label{a-concave-function-1} \\ f\left(\lambda x+(1-\lambda)y\right) &\geq f(x)^\lambda f(y)^{1-\lambda}, \ \ \mathrm{for} \ \ \alpha =0. \label{log-concave-function-1} 
\end{align}
The notion of $\alpha$-concave functions may be referred to, e.g., Avriel \cite{A72}, Borell \cite{B74,B75}, and Brascamp and Lieb \cite{BL76}. Functions that satisfy \eqref {log-concave-function-1} are called \textit{log-concave functions} because their logarithms are concave.     Note that if $f$  is $\alpha_1$-concave, then it is also $\alpha_2$-concave for all $\alpha_2<\alpha_{1}$, see e.g., \cite{BL76}. In particular, any $\alpha_{1}$-concave function for any $\alpha_{1}\geq0$ must be $\alpha_{2}$-concave for any $\alpha_{2}\leq 0$. It can be checked that the characteristic function $\mathbf{1}_K$ of any convex body (i.e., a compact convex set in $\Rn$ with nonempty interior) $K$ is $\alpha$-concave for any $\alpha\leq0$, where
\begin{eqnarray*}
\mathbf{1}_K(x)=\left\{\begin{array}{ccc}
1  & \text{if}&  x\in K,  \\
0  & \text{if}&  x\not\in  K.
\end{array} \right.
\end{eqnarray*}
In this way, $\alpha$-concave functions can be viewed as functional generalizations of convex bodies.

In the literature, a great deal of work has been done to lift the geometric theory on convex bodies to an analogous theory for log-concave functions (see \cite{AAGCJV19,AMJV16,AMJV17,AKM04, AKSW12,CFGLSW16,CW14,CW15,CF13,FYZ24,FM06,FM08,HLXZ24,IN22,KM05,Leh09,LiSW19,LiSW19-2,Rot23,Uli24} among others). However, only a few works have been conducted for $\alpha$-concave functions. In 2013, Rotem \cite{Rot13} defined the support function of $\alpha$-concave functions  and also introduced the concept of mean width for this class. Milman and Rotem \cite{MR13} proposed a natural addition operation $\oplus_{\alpha}$ on the class of $\alpha$-concave functions and defined the mixed integral, a generalization of the mixed volumes of convex bodies. Other contributions for $\alpha$-concave functions can be found in e.g., \cite{AKM04, AKSW12,CW14,CFGLSW16,RX23}.

Our focus in this paper is to develop the notion of surface area measures of $\alpha$-concave functions in $\Rn$ for $-\frac{1}{n}<\alpha <0$ and to solve their related Minkowski problems. The concept of surface area measures of convex bodies is central in convex geometry due to its connections with other important topics (e.g., the mixed volume, Minkowski inequality, Minkowski's problem, etc.), see more information in the book by Schneider \cite{Sch13} and the references in it. The surface area measure $S_K(\cdot)$ of a convex body $K$ can be obtained via the following variational formula:
$$\lim _{t \rightarrow 0^{+}} \frac{|K+t L|-|K|}{t}=\int_{\sphere} h_L d S_K,$$
where $\sphere$ is the unit sphere,  $|K|$ is the volume of $K$, $h_{L}$ is the support function of a convex body $L$, and $K+tL$, for $t>0$, is the Minkowski addition defined by
$$K+tL:=\{x+ty:x\in K,y\in L\}.$$

There has been growing interest in recent years in developing analogous variational formulas for log-concave functions. For two log-concave functions $f=e^{-\varphi}$ and $g=e^{-\psi}$, the first variation of $f$ along $g$ in terms of the Asplund sum is defined by $$\delta J(f, g)=\lim_{t\rightarrow 0^+} \int_{\Rn} \frac{(f\oplus t\bigcdot g)(x)-f(x)}{t}\,dx $$ where the  Asplund sum    $f\oplus t\bigcdot g$ is given by $e^{-(\varphi^*+t\psi^*)^*}$, with $\varphi^*$ denoting the Legendre transform of $\varphi$ (see \eqref{legendre-tran}). Colesanti and Fragal\`a initiated the study in \cite{CF13}, where they derived the variational formula for log-concave functions under certain regularity and growth assumptions. In \cite{Rot22}, Rotem weakened the assumptions made in \cite{CF13} to essential continuity, and later, in \cite{Rot23} he successfully removed these assumptions entirely and derived a general variational formula for the total mass of log-concave functions. Rotem's approach involves calculating an explicit formula for the variation of the total mass of log-concave functions in the direction of a scaled characteristic function and using it to control variation along a bounded function. 
This approach was used again in \cite{HLXZ24} by Huang, Liu, Xi, and Zhao to derive the variational formula for the so-called $q$-th moment of a log-concave function $f$. However, they required an assumption on the rate of convergence of $f$ at the origin $o$ to control the position of $o$, which was successfully removed by Ulivelli in \cite{Uli24}. Similar approach to those in \cite{ HLXZ24, Rot23} 
was applied to the Orlicz-moment of log-concave functions in \cite{FYZZ25} to derive the so-called dual Orlicz curvature measures of log-concave functions.

 In a recent paper, Fang, Ye, and Zhang \cite{FYZ24} obtained a variational formula for the Riesz $\alpha$-energy, which is the functional counterpart of the chord integral, a key object in integral geometry. They effectively eliminated the rate of convergence assumption and obtained a formula that applies to log-concave functions without regularity assumptions. Their conditions can be described  in a simple form  as follows: given two proper, convex and lower semi-continuous
 convex functions  $\varphi, \psi:\Rn\rightarrow \R\cup\{+\infty\}$, there exist constants $\beta_{1}> 0$ and $\beta_{2}\in\R$,  such that, the origin $o$ belongs to $\mathrm{int}\big(\overline {\dom \varphi}\big)$ and  
  \begin{equation}\label{1.1-introduction}
 -\infty<\inf \psi^*\leq   \psi^*\leq\beta_1\varphi^*+\beta_2,\end{equation} where $\dom \varphi=\{x\in \Rn: \varphi(x)<\infty\}$. This relation is natural from the point of view of variational formulas for convex bodies. For example, in \cite{HLYZ16}, Huang, Lutwak, Yang, and Zhang computed the variational formula for the $q$-th dual volume of $K$ in the direction of some convex body $L$. In that case, $K$ has a positive support function, which is a special case of (\ref{1.1-introduction}), if $\varphi^{*} = h_{K}$ and $\psi^{*}=h_{L}$ on $\sphere$. If translation invariance is expected,   \eqref{1.1-introduction} can be weakened to  \begin{equation}\label{1.1-introduction-weak}
 (\dom \psi) \cap \mathrm{int}\big(\beta_1  \overline {\dom \varphi}\big) \neq\emptyset  \ \ \mathrm{and}\ \ \psi^*\leq
\beta_1\varphi^*+\beta_2.  \end{equation} The condition \eqref{1.1-introduction-weak} will be used in our main result, Theorem \ref{thm10}.

In this paper, we will focus on $\alpha$-concave functions with  $\alpha\in (-\frac{1}{n}, 0)$. In this case, from \eqref{a-concave-function-1},  $f^\alpha$ is convex if   $f:\Rn\to[0,+\infty)$ is $\alpha$-concave. Hence, there exists a unique convex function $\varphi: \Rn\to\R\cup\{+\infty\}$ such that the $\alpha$-concave function $f$ can be written in the form: 
$$f(x) = (1-\alpha \varphi(x))^\frac{1}{\alpha}.$$
The convex function $\varphi$ is called the \textit{base} of $f$.   To have $f$ well-defined, it is necessary to have  $\varphi>\frac{1}{\alpha}$.  Let $\conv(\Rn)$ be the set of proper, convex and lower semi-continuous functions $\varphi:\mathbb{R}^n\rightarrow \mathbb{R}\cup\{+\infty\}$. Define $\CA(\Rn)$ to be the set of $\alpha$-concave functions $f = (1-\alpha\varphi)^{\frac{1}{\alpha}},$ where $\varphi \in \conv(\Rn)$ is nonnegative and  coercive, i.e.,
$$\liminf_{\left |x\right|\to +\infty}\frac{\varphi(x)}{\left |x\right|} > 0.$$
Given two $\alpha$-concave functions $f = (1-\alpha\varphi)^\frac{1}{\alpha}$ and  $g = (1-\alpha\psi)^\frac{1}{\alpha}$,  the $\alpha$-combination of $f$ and $g$, with parameter $t$, is defined in \cite{MR13} by
$$f\oplus_{\alpha} t\bigcdot_\alpha g = \big(1-\alpha(\varphi^*+t\psi^*)^*\big)^\frac{1}{\alpha}.$$  
To motivate the notion of surface area measure, we will compute the first variation of  the total mass at $f$  along $g$ in terms of the $\alpha$-sum;
$$ \JD  (f,g):=\lim_{t\to 0^+}\frac{\Ja(f\oplus_{\alpha} t\bigcdot_\alpha g)-\Ja(f)}{t},$$where $J(f) = \int_{\mathbb{R}^{n}}f\,dx$. We shall find the explicit integral expression for $\JD  (f,g)$
following the approach used in   \cite{FYZ24} under the conditions given in \eqref{1.1-introduction} (and hence \eqref{1.1-introduction-weak} because the translation invariance is expected). As one can see from Lemma \ref{Lem2.2}, such an integral expression for $\JD(f, g)$ is arguably meaningful when $-\frac{1}{n}<\alpha<0$. Unlike log-concave functions, the base function of a general $\alpha$-concave function should be strictly larger than $\frac{1}{\alpha}$, and this creates technical difficulties while computing the variational formula. Consequently, in order to avoid singularities and other technical difficulties, our calculation for $\JD(f, g)$ will be done under the assumption that $f=(1-\alpha\varphi)^\frac{1}{\alpha}$ such that  $\varphi\geq 0$.

 Let $\partial E$ be the boundary of $E,$ $\mathrm{int}(E)$ denote the interior of $E$, $K_f$ be the support of $f$ and $\mathcal{H}^{n-1}$ denote the $(n-1)$-dimensional Hausdorff measure. Our first main result is stated in Theorem \ref{MainTheorem1} as follows.

 \vskip 2mm\noindent {\bf Theorem \ref{MainTheorem1}.} {\em 
   Let $-\frac{1}{n}<\alpha < 0$, and $f=(1-\alpha\varphi)^{\frac{1}{\alpha}}\in \CA(\Rn) $. Assume that $g = (1-\alpha\psi)^{\frac{1}{\alpha}}\in \mathcal{C}_{\alpha}^{+}(\mathbb{R}^{n})$, where $\psi\in \conv(\Rn)$, and there exist constants $\beta_{1}> 0$ and $\beta_{2}\in\R$  satisfying \eqref{1.1-introduction-weak}. Then,  
\begin{equation}\label{Var-Formula}
\JD(f,g)=\int_{\Rn}\psi^*\left(\nabla \varphi(x)\right) f(x)^{1-\alpha}\,dx + \int_{\partial {K_f}}{h_{K_g}(\nu_{K_f}(x))}f(x)\,d\mathcal{H}^{n-1}(x),
\end{equation}
where $h_{K_g}$ is the support function of $K_g$, $\nu_{K_f}$  is the  Gauss map of $K_f$, and $\nabla \varphi$ denotes the gradient of $\varphi$.}

Note that our variational formula for $\alpha$-concave functions is not the first in the literature. In special cases where $\varphi$ has compact domain and $\psi$ satisfies certain conditions, our result overlaps with Theorem 3.15 in \cite{Uli24}. However, our condition \eqref{1.1-introduction-weak} is more natural when dealing with convex perturbations. The technique used in \cite[Theorem 3.15]{Uli24} involves approximating convex functions by convex bodies through the Gaussian-symmetric distance, which is entirely different from our method.

Formula (\ref{Var-Formula}) naturally defines the following pair of Borel measures: \begin{align}\Big((\nabla\varphi)_{\sharp}(f^{1-\alpha}\,dx),\ \ (\nu_{K_{f}})_{\sharp}\big(f\,d\mathcal{H}^{n-1}|_{\partial K_{f}}\big)\Big). \label{SAM}\end{align} Here, the former one is the push-forward Borel measure (on $\Rn$) of $f^{1-\alpha}\,dx$ under the map $\nabla \varphi,$ while the  latter one is  the push-forward Borel measure (on the unit sphere $S^{n-1}$)  of $f\,d\mathcal{H}^{n-1}|_{\partial K_{f}}$ under the map $\nu_{K_f}$. Under certain regularity conditions for $f$, say the essential continuity (see Section \ref{necessary-1} or  \cite{EK15} for its definition),  one has $f|_{\partial K_{f}}(x)=0$ for $\mathcal{H}^{n-1}$-almost every $x$ and hence the second measure in \eqref{SAM} is a zero measure. The first measure in \eqref{SAM} is called the \textit{Euclidean surface area measure} of $f$, and is more important for our purpose in this paper. To prove  (\ref{Var-Formula}), we first explicitly calculate $\JD(f, f)$, then condition (\ref{1.1-introduction}) will be used to control the variation of $f$ in the direction of $g$, and finally, the translation invariance will be used to replace \eqref{1.1-introduction} by \eqref{1.1-introduction-weak}. Our proof relies heavily on the generalized dominated convergence theorem and the variational formula \cite[Lemma 5.3]{FYZ24} for the radial functions of unbounded closed convex sets in terms of Minkowski addition, extending from the work for convex bodies by Huang, Lutwak, Yang and Zhang \cite{HLYZ16}.

Having defined the surface area measures in (\ref{SAM}), it is natural to study the associated Minkowski problems, which can be stated as follows.
\vskip 2mm \noindent {\bf Problem \ref{Mink-double-1}.} {\em
  Let $ -\frac{1}{n}<\alpha< 0$, and let $\mu$ and $\nu$ be two Borel measures defined on $\Rn$ and $\sphere$, respectively. Find the necessary and/or sufficient conditions on $\mu$ and $\nu$, such that there exists an $\alpha$-concave function $f$ satisfying 
$$(\mu, \nu)=\Big((\nabla\varphi)_{\sharp}(f^{1-\alpha}\,dx),\ \ (\nu_{K_{f}})_{\sharp}\big(f\,d\mathcal{H}^{n-1}|_{\partial K_{f}}\big)\Big) .$$   }

In this paper, we will concentrate on the case when the second measure in (\ref{SAM}) is the zero measure, where Problem \ref{Mink-double-1} becomes Problem \ref{problem-E-FMP}. 
\vskip 2mm \noindent {\bf Problem \ref{problem-E-FMP}.} {\em Let $ -\frac{1}{n}<\alpha< 0$ and $\mu$  be a Borel measure  defined on $\Rn$. Find the necessary and/or sufficient conditions on $\mu$, such that there exists an $\alpha$-concave function $f$ satisfying 
$$\mu =  (\nabla\varphi)_{\sharp}(f^{1-\alpha}\,dx)\quad \text{and}\quad 0=(\nu_{K_{f}})_{\sharp}\big(f\,d\mathcal{H}^{n-1}|_{\partial K_{f}}\big).$$}

Assuming sufficient regularity on the unknown convex function $\varphi$, to find solutions to Problem \ref{problem-E-FMP} is reduced to solve the following Monge-Amp\'{e}re equation: 
 \begin{align*}
h(\nabla\varphi(y)){\rm det}(\nabla^2\varphi(y))= \big(1-\alpha \varphi(y)\big)^{\frac{1-\alpha}{\alpha}},  
\end{align*}where $h$ is some smooth function in $\Rn$ and ${\rm det}(\nabla^2\varphi(y))$ denotes the determinant of the Hessian matrix of $\varphi$ at $y$.

Note that the classical Minkowski problem (see e.g. \cite{Min1897, Min1903}) is a question about characterizing the surface area measures of convex bodies defined on the unit sphere. The Minkowski type problems take this idea and extend it by considering different kinds of measures associated with convex bodies or functional generalizations of them. The variational method is the most common tool to find solutions to various Minkowski type problems. It was initially used by Alexandrov \cite{Ale38} to solve the classical problem, and has since been extended to Minkowski type problems for convex bodies (see \cite{BBCY19,BLYZ13,BLYZZ20, CLZ19,GHWXY19, HLYZ16,HLYZ18, HXZ21, Lut93, LXYZ23, Zha17, Zhu15} among others). The variational approach was first used to solve the Minkowski problem for log-concave functions by  Cordero-Erausquin and Klartag \cite{EK15}, and has also been applied to solve other Minkowski type problems for log-concave functions in e.g. \cite{FR25, FXY20+, FYZ24, FYZZ25, HLXZ24, Rot22}. 

Optimal transport, on the other hand, provides a machinery to attack problems arising in many areas. It is impossible for us to list all the extraordinary results, so we refer readers to the excellent books by Figalli and Glaudo \cite{FG21}, Santamborgio \cite{San15}, and Villani \cite{Vil03, Vil08}. Of the most importance for our purpose is the wonderful work by Santamborgio \cite{ San16}, where the author  used optimal transport to solve Minkowski type problems for log-concave functions. In another paper \cite{HK21},  Huynh and Santamborgio used the same approach to deal with the $q$-moment measure for convex functions. Underlying the works in  \cite{HK21, San16} is the   celebrated Brenier’s theorem, see e.g. \cite{Bre91, GM96, Mc01}. We shall use optimal transport to deal with the related Minkowski problem for $\alpha$-concave measures in Section \ref{section-solution}. 

Comparing log-concave functions to $\alpha$-conave functions, one can notice the essential difference between them: the convex function  $\varphi_1$  for a log-concave function $f_1=e^{-\varphi_1}$ can  be arbitrarily far below zero; however, this is not the case for $\alpha$-concave functions. As mentioned above, the base function $\varphi$ for an $\alpha$-concave function $f=(1-\alpha \varphi)^{\frac{1}{\alpha}}$ must be  greater than $\frac{1}{\alpha}$. This difference brings significant challenges to solving the associated Minkowski problem for $\alpha$-concave functions using the variational approach. To address this issue and to use the feasible approach by optimal transport, we need to extend the notion of surface area measures from $\alpha$-concave functions to $\alpha$-concave measures.   In brief, an $\alpha$-concave measure is of the form 
\begin{align}\label{a-con-measure-1}\varrho(x) = \rho(x)\,dx + \varrho^{s},
\end{align}
where $\rho=  \left( 1-\alpha\varphi\right)^{\frac{1}{\alpha}}$ for some convex function $\varphi\geq \frac{1}{\alpha}$, and the singular part $\varrho^{s}$ is mutually singular with respect to the Lebesgue measure and is concentrated on $\left\{\varphi =  \frac{1}{\alpha}\right\}$.  Hence, the singular part $\varrho^{s}$ is supported in the set $\{\rho=+\infty\}$.

The notion of the surface area measures of an $\alpha$-concave function $f=(1-\alpha\varphi)^{\frac{1}{\alpha}}$ will be extended to that of $\alpha$-concave measures as follows. By $\partial \varphi$, we mean  the subgradient of $\varphi$ (see \eqref{subg-def-1}). By $\mathrm{Supp}(\pi)$ we mean the support of $\pi$, i.e., the largest closed subset of $\Rn$ for which every open neighbourhood of any point of the set has positive measure with respect to $\pi$.
 
 \vskip 2mm \noindent {\bf Definition \ref{def-measure-measure}.} {\em 
    Let $\varrho$ given in \eqref{a-con-measure-1} be a finite  $\alpha$-concave measure. One says that $\mu$ is a    \textit{Euclidean surface area measure}    of $\varrho$,  if there is a measure $\pi$ on $\mathbb{R}^{n}\times \mathbb{R}^{n}$, whose marginals are $\rho^{1-\alpha}\,dx+\varrho^s$ and $\mu$:$$\pi(A\times \mathbb{R}^{n}) = \big(\rho^{1-\alpha}\,dx+\varrho^s\big)(A)\ \ \mathrm{and} \ \ \pi(\mathbb{R}^{n}\times B) = \mu(B)$$  for any $\big(\rho^{1-\alpha}\,dx+\varrho^s\big)$-measurable set $A$ and  $\mu$-measurable set $B$; and $$\mathrm{Supp}(\pi)\subset \mathrm{Graph}(\partial \varphi):=\{(x,y)\in \mathbb{R}^{n}\times \mathbb{R}^{n}:y\in \partial \varphi(x)\},$$ i.e.,  $ 
y\in \partial \varphi(x) 
$  for $\pi$-almost every $(x,y)$. The spherical surface area of $\varrho$ is defined by $$(\nu_{K_{\rho}})_{\sharp}\big(\rho\,d\mathcal{H}^{n-1}|_{\partial K_{\rho}}\big).
$$}

\vspace{1mm} The extended Minkowski problem for $\alpha$-concave measures is stated as follows.

\vskip 2mm \noindent {\bf Problem \ref{problem-FMP-measure}.} {\em 
 Let $-\frac{1}{n}<\alpha<0$ and let $\mu$  be a Borel measure on $\Rn$ and $\nu$ be a Borel measure on $S^{n-1}$. Does  there exist an $\alpha$-concave measure $\varrho$, given by \eqref{a-con-measure}, such that $\mu$ and $\nu$ are the Euclidean and the spherical surface area measures of $\varrho$, respectively?   } 
  
\vspace{1mm} The extended version of Problem \ref{problem-E-FMP} can be stated as follows.
\vskip 2mm \noindent {\bf Problem \ref{problem-E-FMP-measure}.} {\em     Let $-\frac{1}{n}<\alpha<0$ and $\mu$ be a Borel measure on $\mathbb{R}^{n}$. Does there exists an $\alpha$-concave measure $\varrho$ such that $\mu$ is the Euclidean surface area measure of $\varrho$ and the spherical surface area measure of $\varrho$ is the zero measure?}

 \vspace{1mm}
 As our measures now may no longer be absolutely continuous with respect to the Lebesgue measure, they do not meet a necessary condition of Brenier’s theorem. Instead, we employ an optimality criterion due to Knott and Smith \cite{KS84}, which, although weaker than Brenier's theorem, remains standard in optimal transport theory. Our main result in Section \ref{section-solution} is summaried in Theorem \ref{sol-measure-measure}, which is stated here for readers' convenience. 

 \vskip 2mm \noindent {\bf Theorem \ref{sol-measure-measure}.} {\em 
Let $-\frac{1}{n}<\alpha<0$ and $\mu$ be a probability measure on $\Rn$ with finite first moment such that  the barycenter of $\mu$ is the origin $o$ and $\mu$ is not supported in any hyperplane. Then, there exists an $\alpha$-concave measure $$\bar\varrho = (1-\alpha\varphi_{0})^{\frac{1}{\alpha}}\,dx + \varrho^{s}_{0},$$ such that $\mu$ is the Euclidean surface area measure of $\bar{\varrho}$ and the spherical surface area measure of $\bar\varrho$ is the zero measure.}
 
 \vspace{1mm} In general, however, $\mu$ may not be uniquely determined.  Notably, for almost every $x\in  \left\{\frac{1}{\alpha}<\varphi <+\infty \right\}$, we have $y = \nabla \varphi(x)$. When $\inf \varphi=\frac{1}{\alpha}$  and $\varphi$ is differentiable at 
  $x\in \left\{\varphi=  \frac{1}{\alpha}\right\}$, we have $\partial \varphi(x) = \{o\}$,  and hence  $
\mu = (\nabla\varphi)_{\sharp}(\rho^{1-\alpha}\,dx+\varrho^s).
$  In particular, if $\varphi>\frac{1}{\alpha}$ on $\Rn,$ then $
\mu = (\nabla\varphi)_{\sharp} (\rho^{1-\alpha}\,dx)
$ is exactly the Euclidean surface area measure of the $\alpha$-concave function $\rho=(1-\alpha \varphi)^{\frac{1}{\alpha}}$ defined by the first measure in \eqref{SAM}, and the extended Minkowski problem (i.e., Problem \ref{problem-E-FMP-measure}) reduces to the Euclidean functional Minkowski problem for $\alpha$-concave functions (i.e.,  Problem \ref{problem-E-FMP}). Thus, if $\inf\varphi>\frac{1}{\alpha},$ Theorem \ref{sol-measure-measure} provides a solution to Problem \ref{problem-E-FMP}. However, it is not clear when the solution $\varphi$ given in Theorem \ref{sol-measure-measure} does  satisfy $\inf \varphi >\frac{1}{\alpha}$, and we leave this investigation for future studies. 

\section{Preliminaries and notations} In this section, we will introduce some basic background needed for later context.  We refer readers to \cite{Roc70,Sch13} for more background in convex analysis, and to \cite{FG21,Vil03,Vil08} for topics related to optimal transport. 

\subsection{\texorpdfstring{Convex sets, convex functions and $\alpha$-concave functions}{Convex sets, convex functions and alpha-concave functions}}
Let $\mathbb{N}$, $\mathbb{Z}$, and $\mathbb{Q}$ denote the sets of positive integers, integers, and rational numbers, respectively. Let $\Rn$ be the Euclidean space with dimension $n\in \mathbb{N}$.  Let $\mathcal{L}^n$ and $\mathcal{H}^{n-1}$ be the $n$-dimensional Lebesgue measure and $(n-1)$-dimensional Hausdorff measure, respectively. By $o$, $|x|$  and $\langle x,  y\rangle$, we mean the origin in $\Rn$, the Euclidean norm of $x\in \Rn$  and, respectively, the inner product of $x, y\in \Rn$. Let $\sphere:=\{x\in\Rn:|x|=1\}$ and $\ball:=\{x\in\Rn:|x|\leq1\}$ be the unit sphere and unit ball in $\Rn$.  
Let $\omega_n$ denote the volume of $\ball$.

Let $E\subset \Rn$ be a nonempty set. By $\partial E,\overline{E}$ and $\mathrm{int}(E)$, we mean the boundary, closure and the interior of $E$, respectively. Convex sets play a central role in our study. The set $E$ is said to be convex if, for any $\lambda\in [0,1]$ and $x,y\in E$, we also have $\lambda x+(1-\lambda)y\in E$. If in addition $E$ is compact and has nonempty interior, we call $E$ a convex body.  

When $E\subset \Rn$ is a closed convex set, denote by  $\nu_E: \partial E\rightarrow \sphere$ the  Gauss map  of $E$, mapping the subsets of $\partial E$ to subsets of $\sphere.$   
Note that $\nu_E(x)$ is a singleton set for $\mathcal{H}^{n-1}$-almost all $x\in \partial E$. Let $aE=\{ax: x\in E\}$ for $a>0$. Define the  Minkowski sum of two closed convex sets $aE$ and $bF$ by
$$aE+bF:= \Big\{a x+b y: x\in E,y\in F\Big\}.$$ 
The support function $h_E$ of a closed convex set $E$ is defined by $$h_E(x) := \sup\Big\{\langle x, y\rangle : y \in E\Big\}.$$

A function $\varphi:\Rn\rightarrow \R\cup\{+\infty\}$ is said to be convex if $$\varphi(\lambda x+(1-\lambda)y)\leq \lambda \varphi(x)+(1-\lambda)\varphi(y)$$ for any $\lambda\in [0, 1]$ and $x, y\in \Rn.$ By $\dom \varphi$ we mean the effective domain of $\varphi$, given by $$\dom  \varphi:=\Big\{x\in\Rn:\varphi(x)<+\infty\Big\}.$$ Clearly, if $\varphi$ is a convex function, then $\dom \varphi$ of is a convex set. To enhance readability, we occasionally use $D_{\varphi}$ to mean $\overline{\mathrm{dom}\ \varphi}$. A convex function $\varphi$ is said to be proper if  $\dom \varphi\neq \emptyset$. The subgradient of $\varphi$ at a point $x_0\in\dom \varphi$ is the set
\begin{align}\label{subg-def-1} \partial\varphi(x_0)=\{y\in\Rn:\varphi(x)\geq\varphi(x_0)+\langle y, x-x_0\rangle \ \ \mathrm{for \ any}\ x\in\Rn\}.\end{align}
A well-known fact is that every convex function is differentiable almost everywhere in $\rm int (\dom \varphi)$. When $\varphi$ is differentiable at $x_0\in \rm int (\dom \varphi)$, we use $\nabla\varphi(x_0)$ for the gradient of $\varphi$ at $x_0$.
A convex function $\varphi: \Rn\rightarrow \R\cup\{+\infty\}$ is  coercive if
$$\liminf_{\left |x\right|\to +\infty}\frac{\varphi(x)}{\left |x\right|} > 0.$$ 
According to \cite[Lemma 2.5]{CF13}, if  $\varphi$ is a coercive convex function, then  there are constants $a>0$ and $b\in \R$ such that \begin{align}\label{coercive} \varphi(x)\geq a|x|+b\ \ \mathrm{for}\ \ x\in \Rn. \end{align}  
Let $\conv(\Rn)$ be the set of proper, convex, and lower semi-continuous functions $\varphi:\mathbb{R}^n\rightarrow \mathbb{R}\cup\{+\infty\}$. A natural duality of a function (not necessarily convex) $\varphi$ is the  Legendre transform, which takes the following formulation:  
\begin{align}\label{legendre-tran}\varphi^*(y):=\sup_{x\in\Rn} \Big\{\langle x, y\rangle -\varphi(x)\Big\} \ \ \mathrm{for}\ \  y\in\Rn.\end{align}
The following properties for the Legendre transform hold: for $\lambda>0$ and $\beta\in\R$,
\begin{eqnarray}\label{property}
(\lambda\varphi)^*(y)=\lambda\varphi^*\left(\frac{y}{\lambda}\right)\quad\mathrm{and}\quad  (\varphi-\beta)^*(y)=\varphi^*(y)+\beta.
\end{eqnarray} Clearly, if $\varphi$ is a proper convex function, then $\varphi^*(y)>-\infty$ for any $y\in\Rn$. It can be checked that, for $\varphi: \Rn\rightarrow \R\cup\{+\infty\}$ (not necessarily convex),  $\varphi^*$ is always convex and lower semi-continuous.  Moreover, $\varphi^{*}\leq \psi^*$ if $\varphi\geq \psi$. Let $\varphi^{**}=(\varphi^*)^*$, and then $\varphi^{**}\leq \varphi$ with equality if and only if $\varphi$ is convex and lower semi-continuous. In particular, if $\varphi$ is differentiable at $x\in \rm int (\dom \varphi)$, then
\begin{eqnarray}\label{basicequ}
\varphi^*(\nabla \varphi(x))+\varphi(x)= \langle x,\nabla \varphi(x)\rangle.
\end{eqnarray}
We need the following lemma about the first variation of Legendre transform.
\begin{lemma}\label{Berman's formula} \cite[Proposition 2.1]{Rot22}
Let $\varphi,g:\Rn\to\R\cup\{+\infty\}$ be lower semi-continuous functions. Assume that $g$ is bounded from below and  $g(o),\varphi(o)<+\infty$. Then
$$
\frac{d}{dt}\Big |_{t=0^+}(\varphi+tg)^*(y)=-g(\nabla\varphi^*(y))
$$
at any point $y\in \Rn$ where $\varphi^*$ is differentiable.
\end{lemma}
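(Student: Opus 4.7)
The plan is to sandwich the difference quotient $t^{-1}[(\varphi+tg)^*(y)-\varphi^*(y)]$ between matching one-sided bounds that both tend to $-g(x_0)$, where $x_0 := \nabla \varphi^*(y)$. Differentiability of $\varphi^*$ at $y$ forces $\partial \varphi^*(y) = \{x_0\}$, and in the convex, lower semi-continuous setting (as in $\conv(\Rn)$), the Fenchel--Young equality $\langle x_0, y\rangle = \varphi(x_0) + \varphi^*(y)$ is available. Inserting $x_0$ as a test point in the defining supremum of $(\varphi+tg)^*$ gives the lower bound
\[(\varphi+tg)^*(y) \;\geq\; \langle x_0, y\rangle - \varphi(x_0) - tg(x_0) \;=\; \varphi^*(y) - tg(x_0),\]
whence $\liminf_{t\to 0^+} t^{-1}[(\varphi+tg)^*(y) - \varphi^*(y)] \geq -g(x_0)$. (If $g(x_0) = +\infty$, this step is vacuous; the upper bound below will independently force the one-sided derivative to be $-\infty$.)

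For the upper bound, I would pick for each small $t>0$ an approximate maximizer $x_t$ satisfying
\[(\varphi+tg)^*(y) \;\leq\; \langle x_t, y\rangle - \varphi(x_t) - tg(x_t) + t^2,\]
and combine with the trivial $\langle x_t, y\rangle - \varphi(x_t) \leq \varphi^*(y)$ to arrive at
\[\frac{(\varphi+tg)^*(y) - \varphi^*(y)}{t} \;\leq\; -g(x_t) + t.\]
Chaining this with the lower bound forces $\langle x_t, y\rangle - \varphi(x_t) \to \varphi^*(y)$, so $(x_t)$ is a maximizing sequence for $\varphi^*(y)$. Upper semi-continuity of $x \mapsto \langle x, y\rangle - \varphi(x)$ (a consequence of lsc of $\varphi$) then forces every cluster point $\bar x$ of $(x_t)$ to satisfy $\langle \bar x, y\rangle - \varphi(\bar x) = \varphi^*(y)$, equivalently $\bar x \in \partial \varphi^*(y) = \{x_0\}$. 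Once the full convergence $x_t \to x_0$ is in place, lower semi-continuity of $g$ yields $\liminf_{t\to 0^+} g(x_t) \geq g(x_0)$, and the matching upper bound $\limsup \leq -g(x_0)$ follows.

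The main obstacle is proving that $(x_t)$ stays bounded, which I would handle by contradiction. If some subsequence has $|x_{t_k}| \to \infty$ with $x_{t_k}/|x_{t_k}| \to u \in \sphere$, then for every $\varepsilon > 0$,
\[\varphi^*(y + \varepsilon u) \;\geq\; \langle x_{t_k}, y\rangle - \varphi(x_{t_k}) + \varepsilon \langle x_{t_k}, u\rangle \;\longrightarrow\; +\infty,\]
contradicting the fact that differentiability of $\varphi^*$ at $y$ requires $\varphi^*$ to be finite on a neighbourhood of $y$. The ancillary hypotheses $\inf g > -\infty$ and $g(o), \varphi(o) < +\infty$ play the supporting role of keeping all quantities above finite and of ensuring $(\varphi+tg)^*(y) \to \varphi^*(y)$ as $t \to 0^+$, which is precisely what powers the argument that $(x_t)$ is a maximizing sequence for the Legendre transform.
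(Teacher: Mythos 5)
The paper offers no argument of its own for this lemma: it is imported verbatim from \cite[Proposition 2.1]{Rot22}, so your proposal has to stand on its own. Its architecture is sound and close in spirit to the standard proof: lower bound by inserting $x_0=\nabla\varphi^*(y)$ into the supremum, upper bound via near-maximizers $x_t$ together with the proof that $x_t\to x_0$ (your blow-up argument, using that differentiability of the convex function $\varphi^*$ at $y$ forces $y\in\mathrm{int}(\mathrm{dom}\,\varphi^*)$, is correct), and then lower semi-continuity of $g$.

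There are, however, two concrete gaps relative to the statement as given. First, the lemma assumes only that $\varphi$ is lower semi-continuous, not convex, while your lower bound justifies the Fenchel--Young equality $\varphi(x_0)+\varphi^*(y)=\langle x_0,y\rangle$ by appealing to the convex setting; for non-convex $\varphi$ one only gets $\varphi^{**}(x_0)+\varphi^*(y)=\langle x_0,y\rangle$, and if $\varphi(x_0)>\varphi^{**}(x_0)$ the inserted test point yields a strictly weaker bound. The equality is in fact true under the stated hypotheses, but it needs a proof: any maximizing sequence for $\varphi^*(y)=\sup_x\{\langle x,y\rangle-\varphi(x)\}$ is bounded (by exactly your blow-up argument), its cluster points attain the supremum by lower semi-continuity of $\varphi$, and attainment at $\bar x$ gives $\varphi^{**}(\bar x)=\varphi(\bar x)$ and hence $\bar x\in\partial\varphi^*(y)=\{x_0\}$ via the biconjugate; so the supremum is attained at $x_0$ and $\varphi(x_0)<\infty$, which is what your lower bound needs. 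Second, the case $g(x_0)=+\infty$ is asserted to follow from ``the upper bound alone,'' but to conclude $-g(x_t)\to-\infty$ you need $x_t\to x_0$, and your argument that $(x_t)$ is a maximizing sequence chained the upper bound with the lower bound, which presupposed $g(x_0)<\infty$. This is repairable: if along some $t_k\downarrow 0$ the difference quotients were bounded below by $-M$, then $g(x_{t_k})\le M+t_k$ and $\langle x_{t_k},y\rangle-\varphi(x_{t_k})\ge \varphi^*(y)-t_k(M+C+t_k)\to\varphi^*(y)$ (with $g\ge -C$), so $x_{t_k}\to x_0$ and lower semi-continuity of $g$ contradicts $g(x_0)=+\infty$; but as written the step is only asserted. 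A minor related slip: your closing remark that $(\varphi+tg)^*(y)\to\varphi^*(y)$ is guaranteed only when $g$ is finite at (near-)maximizers, e.g.\ when $g(x_0)<\infty$. With these two repairs the proof is complete, and it covers in particular the convex instances ($\varphi^*$, $\psi^*$) to which the paper actually applies the lemma.
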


Fix $\alpha\in (-\infty,0)$. Let $\gen: (\frac{1}{\alpha}, +\infty]\rightarrow [0, +\infty)$ be a function given by \begin{align}\label{generate-function-alpha} \gen(t)=(1-\alpha t)^{\frac{1}{\alpha}},\qquad t\in \Big(\frac{1}{\alpha},+\infty\Big)\end{align} and $\gen(+\infty)=\lim_{t\rightarrow +\infty}\gen(t)=0.$  We call $f:\Rn\to[0,+\infty)$ an \textit{$\alpha$-concave function} if $f=\gen(\varphi)$ for some convex function $\varphi: \Rn\rightarrow (\frac{1}{\alpha},+\infty]$. We will call $\varphi$ and $\gen$ the \textit{base function} and the \textit{generator function} of $f$, respectively. That is, if $f$ is an $\alpha$-concave function, then for any $x\in \Rn$,
\begin{align} \label{a-concave-base} f(x)=\Psi_{\alpha}(\varphi(x)) = (1-\alpha \varphi(x))^\frac{1}{\alpha}.\end{align}  Denote by  $\CA(\Rn)$ the set of all $\alpha$-concave functions whose base functions are  nonnegative, coercive, and in $\mathrm{Conv}(\Rn)$. It is clear that  $f^\alpha$ is a convex function and$$\varphi=\frac{1-f^{\alpha}}{\alpha}.$$  Let  $K_f:=\mathrm{Supp}(f) = \overline{\dom\varphi}$ represent the support of $f$, which is closed and convex. Therefore $\nu_{K_f}$ is well-defined $\mathcal{H}^{n-1}$-almost everywhere on $\partial K_f$. 

Let $\alpha\in (-\infty, 0)$, and  $f, g\in \CA(\Rn)$ be two $\alpha$-concave functions with nonnegative base functions $\varphi$ and $\psi$. Denote by $f\oplus_{\alpha} g$ the  \textit{$\alpha$-sum} of $f$ and $g$, formulated by
\begin{equation}\label{alpha-addition}
f\oplus_{\alpha} g := \Big(1-\alpha(\varphi^*+\psi^*)^*\Big)^{\frac{1}{\alpha}}=\gen\Big((\varphi^*+\psi^*)^*\Big).
\end{equation} For$t>0$, let $t\bigcdot_\alpha f$ be the \textit{$\alpha$-scalar multiplication} of $f$ by $t$, given by 
\begin{equation}\label{alpha-muti}
t\bigcdot_\alpha f:=\Big(1-\alpha (t\varphi^*)^*\Big)^{\frac{1}{\alpha}}=\gen\Big((t\varphi^*)^*\Big).
\end{equation}
Putting them together, we can define the following combination
\begin{align}
f\oplus_{\alpha}t\bigcdot_{\alpha}g=\Psi_{\alpha}((\varphi^{*}+t\psi^{*})^{*}).\label{a-combination}
\end{align}
When $\alpha=0$, the $\alpha$-concave functions become log-concave functions, and hence the $0$-sum of log-concave functions is the well-known Asplund sum of log-concave functions. Moreover, it can be checked that the $\alpha$-sum and the $\alpha$-scalar multiplication also generalize the Minkowski combinations for convex sets through their characteristic functions. Let \begin{align*}
    \Ja(f)=\int_{\Rn} f(x)\,dx
\end{align*}   be the \textit{total mass} of  $f\in \CA(\Rn)$. 

The following lemma shows that the variational formulas in Theorems \ref{thm8} and \ref{thm10} are meaningful when $-\frac{1}{n}<\alpha<0$. By default, we always let $f$ and $\varphi$ satisfy \eqref{a-concave-base}.  
\begin{lemma}\label{Lem2.2}
    Let $\alpha < 0$, $f=(1-\alpha \varphi)^{\frac{1}{\alpha}} \in \CA(\Rn)$,  $0\leq l<  n$ and $p\geq 0$. If $-\frac{1}{n-l+p}<\alpha$, then
    \begin{align} \int_{\Rn}|x|^p\big(1-\alpha \varphi(x)\big)^{\frac{1}{\alpha}-l}\,dx < +\infty. \label{finite -p}\end{align}
In particular, when $-\frac{1}{n}<\alpha<0$,
\begin{align} \label{finite -p=0}\int_{\Rn}f(x)\,dx < +\infty\quad \mathrm{and} \quad\int_{\Rn}f(x)^{1-\alpha}\,dx < +\infty.\end{align}
\end{lemma}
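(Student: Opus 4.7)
The plan is to reduce the claim to the convergence of a one-dimensional integral by combining the coercivity of the base function $\varphi$ with a polar coordinate change, and carefully tracking the signs so that a lower bound on $\varphi$ translates into an upper bound on the integrand through the negative exponent $\frac{1}{\alpha}-l$.

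First, I will exploit the standing hypothesis that $f\in\CA(\Rn)$, which forces $\varphi$ to be nonnegative and coercive. By \eqref{coercive}, there exist constants $a>0$ and $b\in\R$ with $\varphi(x)\geq a|x|+b$ for all $x\in\Rn$. Since $\alpha<0$, we have $-\alpha>0$, and combining this with $\varphi\geq 0$ yields
\begin{equation*}
1-\alpha\varphi(x) \;\geq\; 1+(-\alpha)\max\{0,\,a|x|+b\}.
\end{equation*}
In particular, $1-\alpha\varphi(x)\geq 1$ everywhere, and there exist $R>0$ and $C_1,C_2>0$ (depending only on $a,b,\alpha$) such that $1-\alpha\varphi(x)\geq C_1+C_2|x|$ for every $|x|\geq R$. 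Because the exponent $\frac{1}{\alpha}-l$ is strictly negative (as $\alpha<0$ and $l\geq 0$), the map $t\mapsto t^{\frac{1}{\alpha}-l}$ is decreasing on $(0,\infty)$, so these lower bounds on $1-\alpha\varphi$ immediately convert into pointwise upper bounds on $(1-\alpha\varphi)^{\frac{1}{\alpha}-l}$.

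Next, I split the integral in \eqref{finite -p} into the contributions from $B_R$ and $\Rn\setminus B_R$. On $B_R$, the integrand is dominated by $R^p\cdot 1$, giving a finite contribution bounded by $R^p\,|B_R|$. On $\Rn\setminus B_R$, the upper bound combined with polar coordinates gives
\begin{equation*}
\int_{\Rn\setminus B_R}|x|^p\big(1-\alpha\varphi(x)\big)^{\frac{1}{\alpha}-l}\,dx \;\leq\; n\omega_n\int_R^{+\infty} r^{p+n-1}\bigl(C_1+C_2 r\bigr)^{\frac{1}{\alpha}-l}\,dr.
\end{equation*}
The integrand is asymptotic to a constant multiple of $r^{p+n-1+\frac{1}{\alpha}-l}$ as $r\to+\infty$, so integrability at infinity holds precisely when $p+n-1+\frac{1}{\alpha}-l<-1$, i.e.\ when $\frac{1}{\alpha}<-(n-l+p)$. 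Since $\alpha<0$, this is equivalent to $\alpha>-\frac{1}{n-l+p}$, which is exactly the assumed condition; this proves \eqref{finite -p}.

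For the two instances in \eqref{finite -p=0}, the choice $(l,p)=(0,0)$ gives $\int f\,dx<+\infty$ under the threshold $\alpha>-\frac{1}{n}$, and the choice $(l,p)=(1,0)$ yields $\int f^{1-\alpha}\,dx=\int(1-\alpha\varphi)^{\frac{1}{\alpha}-1}\,dx<+\infty$ under the (strictly weaker) threshold $\alpha>-\frac{1}{n-1}$; both are covered by the single hypothesis $-\frac{1}{n}<\alpha<0$, since $-\frac{1}{n}>-\frac{1}{n-1}$. No step is technically deep: the only point requiring care is the sign bookkeeping, since coercivity yields a \emph{lower} bound on $\varphi$ while the desired finiteness requires an \emph{upper} bound on $(1-\alpha\varphi)^{\frac{1}{\alpha}-l}$, and it is precisely the negative exponent $\frac{1}{\alpha}-l$ that makes these two compatible.
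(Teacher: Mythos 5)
Your proposal is correct and follows essentially the same route as the paper: split $\Rn$ into a ball and its complement, use $\varphi\geq 0$ to bound the integrand by $|x|^p$ on the ball, use the coercivity bound \eqref{coercive} to get a linear-in-$|x|$ lower bound on $1-\alpha\varphi$ outside (the paper writes it as $-\frac{\alpha a}{2}|x|$ for $|x|>r_0$), and then conclude by polar coordinates since the exponent condition $n+p+\frac{1}{\alpha}-l<0$ is exactly $\alpha>-\frac{1}{n-l+p}$. The specializations $(l,p)=(0,0)$ and $(1,0)$ for \eqref{finite -p=0} also match the paper.
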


\begin{proof}
    From \eqref{coercive}, there exist constants $a$ and $b$, with $a>0$ and $b\in\R$,  such that $\varphi(x)\geq a\left |x\right|+b$ holds  for any $x\in \Rn$. 
Let $r_0=\max\big\{1,\frac{2(1-\alpha b)}{\alpha a}\big\}$. Clearly, if $|x|>r_0$,  
\begin{equation}\label{radial}
1-\alpha\varphi(x)\geq1-\alpha a|x|-\alpha b\geq -\frac{\alpha a}{2}|x|.
\end{equation}    Note that 
    \begin{align*} 
    \int_{\Rn}|x|^p\big(1-\alpha \varphi(x)\big)^{\frac{1}{\alpha}-l}\,dx
     &=\int_{\{x\in\Rn:|x|\leq r_0\}}|x|^{p}(1-\alpha \varphi(x))^{\frac{1}{\alpha}-l}\,dx\\
    &\quad+\int_{\{x\in\Rn:|x|> r_0\}}|x|^{p}(1-\alpha \varphi(x))^{\frac{1}{\alpha}-l}\,dx\\
    &=: I_1+I_2.
     \end{align*}
  From   $\varphi\geq 0$ and the polar coordinates, we have \begin{align*}
I_1 \leq \int_{\{x\in\Rn:|x|\leq r_0\}}|x|^{p}\,dx 
=\int_{\sphere}\int_{0}^{r_0}r^{n-1+p}\,dr\,du 
=\frac{n}{n+p}\omega_n {r_0}^{n+p}.
\end{align*}
It follows from  \eqref{radial} and the polar coordinates that \begin{align*}
I_2&\leq \Big(\!\!-\!\frac{\alpha a}{2}\Big)^{\frac{1}{\alpha}-l}\int_{\{x\in\Rn:|x|> r_0\}}|x|^{p+\frac{1}{\alpha}-l}\,dx\\
&=\Big(\!\!-\!\frac{\alpha a}{2}\Big)^{\frac{1}{\alpha}-l} \int_{\sphere}\int_{r_0}^{+\infty}r^{n-1+p+\frac{1}{\alpha}-l}\,dr\,du\\
&=-\Big(\!\!-\!\frac{\alpha a}{2}\Big)^{\frac{1}{\alpha}-l} \Big(\frac{n\omega_n }{n+p+\frac{1}{\alpha}-l}\Big) {r_0}^{n+p+\frac{1}{\alpha}-l},
\end{align*}
where we use $n+p+\frac{1}{\alpha}-l<0$ if 
 $-\frac{1}{n-l+p}<\alpha$.
Therefore
$$\int_{\Rn}|x|^p\big(1-\alpha \varphi(x)\big)^{\frac{1}{\alpha}-l}\,dx < +\infty.$$ This proves inequality \eqref{finite -p}. 

For \eqref{finite -p=0}: let  $p=0$. If $-\frac{1}{n}<\alpha<0$, then $-\frac{1}{n-l}\leq -\frac{1}{n}<\alpha<0 $ and 
     $$\int_{\Rn}(1-\alpha \varphi(x))^{\frac{1}{\alpha}-l}\,dx < +\infty.$$ When $l=0$, we get the first inequality in \eqref{finite -p=0}; while if $l=1,$ the second inquality in \eqref{finite -p=0} holds. \end{proof}

\subsection{Optimal Transport} Let $\varrho$ and $\mu$  be two probability measures on $\Rn$. We call $\pi$ a  \textit{transference plan} of $\varrho$ and $\mu$, if $\pi$ is a measure on $\Rn\times \Rn$ such that, for all measurable sets $A, B\subset \Rn$,  $\pi(A\times \Rn) = \varrho(A)$ and $\pi(\Rn\times B) = \mu(B)$. Denote by $\Pi(\varrho,\mu)$ the set of transference plans of $\varrho$ and $\mu$.  Let $c: \Rn\times \Rn\to[0,+\infty]$ be a cost function. A well-known problem in optimal transport is the Kantorovich problem, which aims to find minimizers for
\begin{align}
    \min\left\{\int_{\Rn\times \Rn}c(x,y)\,d\pi(x,y):\pi\in\Pi(\varrho,\mu)\right\}. \label{KP-problem}
\end{align}
      This problem admits solutions if the cost function $c(\cdot, \cdot)$ is assumed to be lower semi-continuous.   The dual problem for the Kantorovich problem is \begin{align}
\max\left\{\int_{\Rn}\varphi \,d\varrho+\int_{\Rn}\psi \,d\mu: \ \varphi(x)+\psi(y)\leq c(x,y)\right\}. \label{dual-KP} \end{align} 
Note that the extremal values of the Kantorovich problem (i.e., \eqref{KP-problem}) and its dual (i.e., \eqref{dual-KP}) are the same.


A measure $\varrho$ on $\Rn$ has finite $p$-th moment if $\int_{\Rn}|x|^pd\varrho(x)<+\infty$. Denote by $\mathcal{P}_{p}(\Rn)$ the set of probability measures on $\Rn$ with finite $p$-th moment. Suppose that $T$ is a map between two measure spaces $(\mathbb{R}^{n},\varrho)$ and $(\mathbb{R}^{n},\mu)$. We say that $T$ pushes $\varrho$ forward to $\mu$, denoted by $T_{\sharp}\varrho = \mu$, if $\mu(B) = \varrho(T^{-1}(B))$ for any $\mu$-measurable set $B$.  If $\varrho, \mu \in \mathcal{P}_{2}(\Rn)$, a distance between $\varrho$ and $\mu$ can be defined as follows:  
\begin{align*}
W_{2}(\varrho,\mu)= \inf_{T}\left\{\left(\int_{\Rn}\left |x-T(x)\right|^{2}\,d\varrho(x)  \right)^{1/2}:T_{\#}\varrho = \mu\right\}, 
\end{align*} which will be called the  $2$-Wasserstein distance between $\mu$ and $\varrho.$ Define the  \textit{maximal correlation functional} by $$
\mathcal{T}(\varrho,\mu):= \frac{1}{2}\int_{\Rn}\left |x\right|^{2}d\varrho(x)+\frac{1}{2}\int_{\Rn}\left |y\right|^{2}d\mu(y) - \frac{1}{2}W^{2}_{2}(\varrho,\mu). $$
Note that the following integral is finite: $$\int_{\Rn}\left |y\right|^{2}d\mu(y)<\infty. $$ Consequently, $2\mathcal{T}(\varrho,\mu)$ differs from  $\int\left |x\right|^{2}d\varrho - W^{2}_{2}(\varrho,\mu)$ by a constant if we fix $\mu$. Hereafter, we shall use $\mathcal{T}(\varrho,\mu)$ rather than $W_{2}(\varrho,\mu)$ because  $\mathcal{T}(\varrho,\mu)$  is well-defined in $\mathcal{P}_{1}(\Rn)$ by the formula
\begin{align}\label{max-Big-Tau}
\mathcal{T}(\varrho,\mu) :=\sup\left\{\int_{\Rn\times \Rn}\langle x,  y\rangle\mathrm{~d}\pi(x, y)~:~\pi\in\Pi(\varrho,\mu)\right\}. 
\end{align} 
Finding the maximizer for the optimization problem \eqref{max-Big-Tau} is equivalent to solving the Kantorovich problem (i.e., \eqref{KP-problem}) with $c(x,y)=-\langle x,  y\rangle$; it is also  equivalent to the problem \eqref{KP-problem} with quadratic cost (i.e., $c(x,y) = \frac{\left |x-y\right|^{2}}{2}$) if $\varrho,\mu\in \mathcal{P}_{2}(\Rn)$. A dual formulation for $\mathcal{T}$ can be formulated by (see \cite[Theorem 2.5.6]{FG21}):
\begin{align} \label{def-T-2}
\mathcal{T}(\varrho,\mu):=\inf\left\{\int_{\mathbb{R}^{n}}\varphi \,d\varrho+\int_{\mathbb{R}^{n}}\varphi^{*} \,d\mu:\varphi\text{ is convex and lower semi-continuous}\right\}.
\end{align} The following fact is standard in optimal transport theory, see e.g. \cite[Theorem 4.1 and Theorem 5.10]{Vil08}.
\begin{theorem}[\bf{Knott-Smith optimality criterion}]\label{[Vil08, Theorem 5.10]}
    Let $\varrho$ and $\mu$ be two probability measures on $\Rn$ such that $\mathcal{T}(\varrho,\mu) < +\infty$. Then the solution $\pi$ to \eqref{KP-problem} exists. Furthurmore,
    $$\mathrm{Supp}(\pi)\subset \mathrm{Graph}(\partial \varphi),$$
 where $\varphi$ is the convex, lower semicontinous function such that
 $$\mathcal{T}(\varrho,\mu)=\int_{\mathbb{R}^{n}} \varphi \,d \varrho+\int_{\mathbb{R}^{n}} \varphi^* d \mu.$$
Here $\mathrm{Supp(\pi)}$ is the largest closed subset of $\Rn$ for which every open neighbourhood of any point of the set has positive measure with respect to $\pi$.
 \end{theorem}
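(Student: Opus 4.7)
The plan is to establish existence of an optimal transport plan, then invoke Kantorovich duality to produce the convex potential $\varphi$, and finally read off the support inclusion from the equality case in the Fenchel-Young inequality.

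First I would prove existence of a maximizer $\pi$ for \eqref{max-Big-Tau}. Since the marginals $\varrho$ and $\mu$ are prescribed probability measures, $\Pi(\varrho,\mu)$ is tight by Prokhorov's theorem and closed under narrow convergence. Along a maximizing sequence $\pi_k$, extract a narrowly convergent subsequence with limit $\pi_\infty\in\Pi(\varrho,\mu)$. To pass to the limit in $\int\langle x,y\rangle\,d\pi_k$, I would split the bilinear form into positive and negative parts and truncate by $\langle x,y\rangle\wedge N$; the assumption $\mathcal T(\varrho,\mu)<+\infty$ together with $\varrho,\mu\in\mathcal P_1(\Rn)$ yields uniform integrability (the ``bad'' tail is uniformly controlled by $|x|\,\mathbf{1}_{|x|>R}+|y|\,\mathbf{1}_{|y|>R}$, integrable against each marginal), so that $\int\langle x,y\rangle\,d\pi_\infty=\mathcal T(\varrho,\mu)$.

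Next I would develop the dual side. Kantorovich--Rockafellar duality applied to the lower semicontinuous cost $c(x,y)=-\langle x,y\rangle$ gives equality of the primal supremum in \eqref{max-Big-Tau} and the dual infimum \eqref{def-T-2}. To obtain an actual dual optimizer, take an infimizing sequence $\varphi_k\in\mathrm{Conv}(\Rn)$ and $c$-transform (Legendre transform) twice, replacing each $\varphi_k$ by its biconjugate; this preserves lower semicontinuity and convexity while not increasing the value. Normalizing, for instance by fixing $\varphi_k(x_0)=0$ at a point in the support of $\varrho$, one has local uniform Lipschitz bounds on $\varphi_k$ on compact sets intersecting $\mathrm{dom}\,\varphi_k$, and a standard Blaschke-type compactness argument (or an Arzel\`a--Ascoli-plus-epigraph argument) extracts a subsequence converging to a convex lower semicontinuous $\varphi$ achieving the infimum.

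Finally, the support inclusion is immediate from complementary slackness. The Fenchel--Young inequality gives
\begin{equation*}
\varphi(x)+\varphi^*(y)\geq\langle x,y\rangle\qquad\text{for all }x,y\in\Rn,
\end{equation*}
with equality iff $y\in\partial\varphi(x)$. Integrating the nonnegative integrand against the optimal $\pi$ and using that primal equals dual,
\begin{equation*}
\int\bigl[\varphi(x)+\varphi^*(y)-\langle x,y\rangle\bigr]\,d\pi(x,y)=0,
\end{equation*}
so the integrand vanishes $\pi$-a.e., i.e.\ $y\in\partial\varphi(x)$ for $\pi$-a.e.\ $(x,y)$. Because $\varphi$ is convex and lower semicontinuous, $\mathrm{Graph}(\partial\varphi)$ is closed (if $(x_k,y_k)\to(x,y)$ with $\varphi(z)\geq\varphi(x_k)+\langle y_k,z-x_k\rangle$, take $\liminf$ and use lower semicontinuity at $x$), so the a.e.\ statement upgrades to $\mathrm{Supp}(\pi)\subset\mathrm{Graph}(\partial\varphi)$.

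The main obstacle is the duality step: establishing simultaneously the absence of a duality gap \emph{and} the attainment of the dual infimum by a bona fide convex lower semicontinuous $\varphi$. The no-gap part is a Hahn--Banach/minimax argument on $C_b(\Rn\times\Rn)$, but the finite first moment setting (rather than second moment) means the quadratic trick $\frac{|x-y|^2}{2}=\frac{|x|^2+|y|^2}{2}-\langle x,y\rangle$ is unavailable, and care is needed to ensure that $\varphi$ and $\varphi^*$ are $\varrho$- and $\mu$-integrable rather than merely bounded below.
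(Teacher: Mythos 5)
The paper itself offers no proof of this theorem: it is quoted as a standard fact due to Knott and Smith, with the reader referred to Villani's Theorems 4.1 and 5.10, so you are reconstructing a textbook argument rather than paralleling anything in the text. Your overall architecture (existence of an optimal plan, absence of a duality gap, attainment of the dual infimum, then Fenchel--Young complementary slackness) is the right one, and your final step is sound: integrating the nonnegative quantity $\varphi(x)+\varphi^*(y)-\langle x,y\rangle$ against the optimal $\pi$ and using primal $=$ dual forces it to vanish $\pi$-a.e., and since $\mathrm{Graph}(\partial\varphi)$ is closed (your liminf/lower semicontinuity argument is correct), the complement is an open $\pi$-null set, which gives $\mathrm{Supp}(\pi)\subset\mathrm{Graph}(\partial\varphi)$.

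There are, however, two genuine gaps. First, in the existence step the uniform integrability claim is false: $\langle x,y\rangle$ behaves like $|x|\,|y|$ and is not controlled by $|x|\mathbf{1}_{\{|x|>R\}}+|y|\mathbf{1}_{\{|y|>R\}}$; a sequence of plans concentrating mass near the diagonal with $|x|=|y|\to\infty$ defeats any bound that is linear in the marginals, and with only first moments $\int\langle x,y\rangle\,d\pi$ need not even be well defined for every $\pi\in\Pi(\varrho,\mu)$. The standard repair is to pick any dual-feasible convex lower semicontinuous $\varphi_1$ with $\int\varphi_1\,d\varrho+\int\varphi_1^*\,d\mu<+\infty$ (available since $\mathcal{T}(\varrho,\mu)<+\infty$ via \eqref{def-T-2}); then $0\leq \varphi_1(x)+\varphi_1^*(y)-\langle x,y\rangle$ is lower semicontinuous and its integral over any plan equals a constant minus $\int\langle x,y\rangle\,d\pi$, so narrow lower semicontinuity of integrals of nonnegative lsc integrands yields exactly the upper semicontinuity you need along the maximizing sequence. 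Second, the attainment of the dual infimum --- the existence of the optimizing $\varphi$, which the theorem's phrasing presupposes --- is only gestured at: normalizing $\varphi_k(x_0)=0$ does not by itself give local boundedness or local Lipschitz bounds (the $\varphi_k$ need not be finite on any common neighbourhood), and one must simultaneously keep $\int\varphi_k^*\,d\mu$ under control; this is precisely the delicate point in the finite-first-moment setting, it is where the cited proofs (Knott--Smith, Villani Theorem 5.10, Figalli--Glaudo Theorem 2.5.6) do their real work, and you flag it as the main obstacle without closing it. As written the proposal is therefore an outline of the correct strategy, not a complete proof.
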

We will need the following facts about the maximal correlation functional, which can be found in \cite[Proposition 3.1]{San16}.
\begin{proposition}\label{[San16,Prop3.1]} Let $\mu\in \mathcal{P}_1(\Rn)$ be a fixed measure whose barycenter is $o$. Then:

\vskip 1mm \noindent (i) For every $\varrho\in \mathcal{P}_1(\Rn)$, one has $\mathcal{T}(\varrho,\mu)\geq 0$.
       \vskip 1mm \noindent (ii)  If $\tilde{\varrho}(x) = \varrho(x+z)$ for some $z\in\Rn$, then $\mathcal{T}(\varrho,\mu) = \mathcal{T}(\tilde\varrho,\mu)$.
      \vskip 1mm \noindent (iii)  If $\varrho_n$ has barycenter at $o$ for all $n$ and converges weakly to $\varrho$, then $$\liminf_{n\to\infty} \mathcal{T}(\varrho_n,\mu) \geq\mathcal{T}(\varrho,\mu).$$
     
\end{proposition}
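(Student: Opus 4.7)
The plan is to exploit the primal supremum formulation \eqref{max-Big-Tau} for all three parts, since the hypothesis on the barycenter of $\mu$ enters naturally through $\int y\,d\mu(y)=o$.

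For (i), I would test the supremum against the product plan $\pi=\varrho\otimes\mu\in\Pi(\varrho,\mu)$: by Fubini,
\[
\int_{\Rn\times\Rn}\langle x,y\rangle\,d\pi(x,y)=\int_{\Rn}\Big\langle x,\int_{\Rn}y\,d\mu(y)\Big\rangle\,d\varrho(x)=0,
\]
so $\mathcal{T}(\varrho,\mu)\geq 0$.

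For (ii), since $\tilde\varrho$ is the translate of $\varrho$ by $-z$, the map $\Phi:(x,y)\mapsto(x-z,y)$ induces a bijection $\pi\mapsto\Phi_{\sharp}\pi$ from $\Pi(\varrho,\mu)$ onto $\Pi(\tilde\varrho,\mu)$. A change of variables yields
\[
\int\langle x,y\rangle\,d(\Phi_{\sharp}\pi)=\int\langle x-z,y\rangle\,d\pi=\int\langle x,y\rangle\,d\pi-\Big\langle z,\int y\,d\mu(y)\Big\rangle,
\]
whose last term vanishes by the barycenter assumption. Taking suprema over $\pi$ on both sides gives $\mathcal{T}(\tilde\varrho,\mu)=\mathcal{T}(\varrho,\mu)$.

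Part (iii) is the main obstacle. My strategy is, given any plan $\pi\in\Pi(\varrho,\mu)$, to construct $\pi_n\in\Pi(\varrho_n,\mu)$ with
\[
\lim_n\int\langle x,y\rangle\,d\pi_n=\int\langle x,y\rangle\,d\pi;
\]
since $\mathcal{T}(\varrho_n,\mu)\geq\int\langle x,y\rangle\,d\pi_n$, taking a supremum over $\pi$ then yields the liminf inequality. The construction is by the gluing lemma: pick $\gamma_n\in\Pi(\varrho_n,\varrho)$, disintegrate $\gamma_n$ and $\pi$ along their $\varrho$-marginals, and glue to produce a measure $\sigma_n$ on $\Rn\times\Rn\times\Rn$ whose $(1,2)$-marginal is $\gamma_n$ and whose $(2,3)$-marginal is $\pi$; then $\pi_n$ is the $(1,3)$-marginal of $\sigma_n$. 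Using $\langle x_1,x_3\rangle=\langle x_2,x_3\rangle+\langle x_1-x_2,x_3\rangle$, the main piece reproduces $\int\langle x,y\rangle\,d\pi$, so the problem reduces to showing the error $\int|x_1-x_2|\,|x_3|\,d\sigma_n$ tends to $0$. This is the hard point: the cost is unbounded and requires uniform integrability. I would truncate $|x_3|$ at a level $R$, pass to the limit $n\to\infty$ for the truncated part (exploiting narrow convergence of $\varrho_n$, whose barycenters at $o$ ensure tightness of first moments and allow one to choose $\gamma_n$ concentrated near the diagonal as $n\to\infty$), and finally let $R\to\infty$ using the fact that $\mu\in\mathcal{P}_1$ renders the tail contribution in $|x_3|$ uniformly small.
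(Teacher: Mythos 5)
Parts (i) and (ii) of your proposal are correct, and they are the standard arguments: testing the supremum \eqref{max-Big-Tau} with the product plan $\varrho\otimes\mu$, and pushing plans forward under $(x,y)\mapsto(x-z,y)$, with the barycenter hypothesis on $\mu$ killing the extra term in each case. (Note the paper does not prove this proposition at all; it quotes it from \cite[Proposition 3.1]{San16}, so there is no in-paper argument to compare with.)

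Part (iii), however, contains a genuine gap. Your scheme reduces the liminf inequality to showing that the error $\int|x_1-x_2|\,|x_3|\,d\sigma_n$ vanishes, and for the truncated piece this requires couplings $\gamma_n\in\Pi(\varrho_n,\varrho)$ with $\int|x_1-x_2|\,d\gamma_n\to 0$, i.e.\ $W_1(\varrho_n,\varrho)\to 0$. You justify this by claiming that the barycenters at $o$ ``ensure tightness of first moments,'' but that is false: weak convergence together with barycenter $o$ gives neither uniform integrability of $|x|$ nor convergence of first moments. For instance, $\varrho_n=(1-\tfrac1n)\delta_o+\tfrac1{2n}\big(\delta_{n e_1}+\delta_{-n e_1}\big)$ converges weakly to $\delta_o$ and has barycenter $o$, yet $\int|x|\,d\varrho_n=1$ and $W_1(\varrho_n,\delta_o)=1$ for every $n$ (replacing $\pm n e_1$ by $\pm n^2 e_1$ even makes the first moments blow up), so no couplings ``concentrated near the diagonal'' exist in the sense you need. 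The tail term is also unjustified: $\int|x_1-x_2|\,|x_3|\mathbf 1_{\{|x_3|>R\}}\,d\sigma_n$ cannot be controlled by marginal quantities, since $|x_1-x_2|$ is unbounded and the chosen plan $\pi$ need not satisfy $\int|x|\,|y|\,d\pi<\infty$; the assumption $\mu\in\mathcal P_1(\Rn)$ alone does not make this contribution uniformly small. Because mass of $\varrho_n$ can escape to infinity while the barycenter stays at $o$, an argument that transports a fixed plan $\pi\in\Pi(\varrho,\mu)$ onto $\varrho_n$ with vanishing error cannot work as stated; the lower semicontinuity has to be obtained by a different mechanism, which is precisely the content of \cite[Proposition 3.1]{San16} that the paper invokes.
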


The following lemma compares the first moment of $\varrho$ and $\mathcal{T}(\varrho, \mu)$. We follow the approach in \cite[Proposition 3.2]{San16}, where the case $\varrho$ is absolutely continuous with respect to $\mathcal{L}^{n}$, denoted by $\varrho \ll \mathcal{L}^{n}$, is treated.
\begin{proposition}\label{[San16,Prop3.2]} 
 Let $ \mu\in\mathcal{P}_1(\Rn)$ be such that the support of $\mu$ is not concentrated on a hyperplane. Suppose that the barycenter of $\varrho \in\mathcal{P}_1(\Rn)$ is $o$. Then, there exists a constant $c=c_{\mu}>0$, such that   $$\mathcal{T}(\varrho,\mu)\geq c\int_{\mathbb{R}^{n}}\left|x\right|d\varrho(x).$$
\end{proposition}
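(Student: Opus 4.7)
The plan is to exploit the primal characterization $\mathcal{T}(\varrho,\mu)=\sup_{\pi\in\Pi(\varrho,\mu)}\int\langle x,y\rangle\,d\pi$ from \eqref{max-Big-Tau}, building for each admissible $\varrho$ an explicit transport plan whose cost is at least $c_\mu\int|x|\,d\varrho$. The barycenter condition $b_\varrho=o$ will be used twice: once to kill a constant-direction contribution in the cost, and once to guarantee non-negativity of an auxiliary corrector measure. The dual formulation \eqref{def-T-2} would only give upper bounds on $\mathcal{T}$, so I avoid it. Note in particular that this approach is insensitive to whether $\varrho$ is absolutely continuous, bypassing the assumption needed for Brenier's theorem used in \cite{San16}.

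The first step extracts geometric information from the non-degeneracy of $\mathrm{supp}(\mu)$. Since $\mathrm{supp}(\mu)$ is not contained in any hyperplane, its affine hull equals $\Rn$, so one can pick $n+1$ affinely independent points $y_0,\dots,y_n\in\mathrm{supp}(\mu)$. The simplex $S=\mathrm{conv}(y_0,\dots,y_n)$ has non-empty interior; fix $p\in\mathrm{int}(S)$ and $r>0$ with $B(p,r)\subset S$. Every $\theta\in S^{n-1}$ then admits unique continuous barycentric coordinates $\lambda_0(\theta),\dots,\lambda_n(\theta)\geq 0$, $\sum_i\lambda_i(\theta)=1$, determined by $p+r\theta=\sum_i\lambda_i(\theta)\,y_i$. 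Setting $\lambda_i(x):=\lambda_i(x/|x|)$ for $x\neq o$ (and arbitrarily at $o$) yields measurable functions on $\Rn$.

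Next I would construct the plan. Choose $\delta\in(0,r)$ so small that the balls $B_i:=B(y_i,\delta)$ are pairwise disjoint; because $y_i\in\mathrm{supp}(\mu)$, each $m_i:=\mu(B_i)>0$, hence $\epsilon_0:=\min_i m_i>0$. Let $\nu_i:=(\mu\mres B_i)/m_i$ and $\bar\lambda_i:=\int\lambda_i(x)\,d\varrho(x)$. I would define
\[
d\pi(x,y):=\epsilon_0\sum_i\lambda_i(x)\,d\nu_i(y)\,d\varrho(x)+(1-\epsilon_0)\,d\varrho(x)\,d\mu'(y),
\]
where the corrector $\mu'$ is determined by $(1-\epsilon_0)\mu':=\mu-\epsilon_0\sum_i\bar\lambda_i\nu_i$. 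That $\mu'$ is a genuine probability measure is the delicate point: $\sum_i\bar\lambda_i=1$ gives total mass $1-\epsilon_0$ on the right-hand side, and on each $B_i$ one has $(1-\epsilon_0)\mu'|_{B_i}=\mu|_{B_i}(1-\epsilon_0\bar\lambda_i/m_i)\geq 0$ since $\epsilon_0\bar\lambda_i\leq\epsilon_0\leq m_i$; outside $\cup_iB_i$ the formula reduces to $\mu/(1-\epsilon_0)$. A direct check shows that $\pi$ has marginals $\varrho$ and $\mu$.

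Finally, evaluating the cost, the product-part contributes $(1-\epsilon_0)\langle b_\varrho,b_{\mu'}\rangle=0$ thanks to $b_\varrho=o$, while the first part equals $\epsilon_0\int\bigl\langle x,\sum_i\lambda_i(x)\bar\nu_i\bigr\rangle d\varrho(x)$. Since $|\bar\nu_i-y_i|\leq\delta$, we get $\sum_i\lambda_i(x)\bar\nu_i=p+r\,x/|x|+E(x)$ with $|E(x)|\leq\delta$; integrating and again using $b_\varrho=o$ to annihilate the $\int\langle x,p\rangle\,d\varrho$ term yields
\[
\mathcal{T}(\varrho,\mu)\;\geq\;\int\langle x,y\rangle\,d\pi\;\geq\;\epsilon_0(r-\delta)\int|x|\,d\varrho(x).
\]
Taking $\delta=r/2$ gives the conclusion with $c=c_\mu:=\epsilon_0 r/2>0$, depending only on $\mu$ through the choice of the $y_i$'s, $p$, $r$, $\delta$, and $\epsilon_0$. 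The main obstacle is the non-negativity bookkeeping for the corrector $\mu'$: this is precisely where the fact that each $y_i$ lies in $\mathrm{supp}(\mu)$, rather than merely in $\mathrm{conv}(\mathrm{supp}(\mu))$, enters in an essential way.
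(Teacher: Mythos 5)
Your construction is correct, and it reaches the conclusion by a genuinely different route than the paper, even though both arguments bound $\mathcal{T}(\varrho,\mu)$ from below via \eqref{max-Big-Tau} by exhibiting an explicit transference plan. The paper first reduces the first moment to $2n\sup_{\xi}\int\langle x,\xi\rangle_{+}\,d\varrho$, maximizes this over $\xi\in S^{n-1}$, splits $\Rn$ into pieces $A^{+}$, $A^{-}$ (with $v^{-}=-v^{+}$ coming from the barycenter condition), splits $\mu$ by a median hyperplane $\{\langle y,\xi^{+}\rangle=\ell_{0}\}$ into $\mu^{\pm}$ of matching masses, and couples by products; this yields the intrinsic constant $c_{\mu}=\frac{1}{2n}\inf\{\int|\langle y,e\rangle-l|\,d\mu:e\in S^{n-1},\,l\in\R\}$, whose positivity is exactly the non-degeneracy of $\mu$. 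You instead exploit the non-degeneracy by extracting $n+1$ affinely independent points of $\mathrm{Supp}(\mu)$ and transporting a fixed fraction $\epsilon_{0}$ of $\varrho$ to small balls around the simplex vertices with direction-dependent barycentric weights, sending the rest by a product coupling with a corrector $\mu'$; the barycenter condition kills both the product term and the $\langle x,p\rangle$ term, giving $c=\epsilon_{0}(r-\delta)$. Your approach avoids the maximization of $\Phi$, the iterated splitting into $A_{i}^{\pm}$, and the median/cumulative-distribution argument, at the price of a constant that depends on ad hoc choices ($y_{i}$, $p$, $r$, $\delta$) rather than the cleaner infimum the paper obtains; both constructions work equally well for singular $\varrho$, which is the point of this proposition. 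Two trivial touch-ups: you should fix $\delta\leq r/2$ \emph{and} small enough that the balls $B(y_{i},\delta)$ are disjoint (your closing ``take $\delta=r/2$'' may conflict with disjointness; with $\delta\leq r/2$ the bound $\epsilon_{0}(r-\delta)\geq\epsilon_{0}r/2$ still holds, with $\epsilon_{0}$ computed for that $\delta$), and at $x=o$ the values $\lambda_{i}(o)$ should be any fixed nonnegative numbers summing to $1$ so that the first marginal identity holds when $\varrho(\{o\})>0$; neither affects the estimate.
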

\begin{proof}
    Let $\{e_{i}\}$ be the standard basis of $\Rn$. It can be checked that 
    \begin{align*}\left |x\right| &= \bigg( \sum_{i=1}^{n}\left\langle x,e_{i}\right\rangle^{2}\bigg)^{\frac{1}{2}}\leq \sum_{i=1}^{n}\left |\left\langle x,e_{i}\right\rangle\right|= \sum_{i=1}^{n}[\left\langle x,e_{i}\right\rangle_{+}+\left\langle x,-e_{i}\right\rangle_{+}],\end{align*}
where $a_{+} = \max\{0,a\}$. Hence, the first moment of $\varrho$ is bounded above by 
    \begin{equation}\int_{\Rn} \left |x\right|d\varrho \leq \int _{\Rn}  \sum_{i=1}^n\left[\left\langle x,e_{i}\right\rangle_{+}+\left\langle x,-e_{i}\right\rangle_{+}\right] \,d\varrho\leq 2n \sup _{\xi \in S^{n-1}} \int_{\mathbb{R}^{n}}\left\langle x, \xi\right\rangle_{+}\mathrm{d} \varrho(x).\label{BoundOfFirstMoment}\end{equation}

    Now we prove the existence of $\xi_{0}\in \sphere$ depending on $\varrho$,  which maximizes the function \begin{align}
         \Phi: \xi \mapsto \int_{\Rn} \left\langle x, \xi \right\rangle_{+} \mathrm{d} \varrho(x) \label{definition-Phi}
    \end{align} on  $\sphere$. Note that $\xi \mapsto \left\langle x, \xi \right\rangle_{+}$ is a continuous function. Moreover, $\left\langle x, \xi \right\rangle_{+} \leq \left |x\right|$ and $\int_{\Rn} |x|  \,d\varrho<\infty$. By the dominated convergence theorem, for any sequence $\xi_{n}\in S^{n-1}$ converging to $\xi_{0}$,
\begin{align*}\lim_{n\to\infty}\int_{\mathbb{R}^{n}} \!\! \left\langle x,\xi _{n}\right\rangle _{+}\,d\varrho(x) = \!\! \int_{\mathbb{R}^{n}}  \lim_{x\to\infty}\left\langle x,\xi_{n}\right\rangle _{+}\,d\varrho(x) = \!\! \int_{\mathbb{R}^{n}} \!\! \left\langle x, \lim_{n\to \infty}  \xi_{n}\right\rangle _{+}\,d\varrho(x) = \!\! \int_{\mathbb{R}^{n}}\!\! \left\langle x,\xi_{0}\right\rangle_{+} \,d\varrho(x).\end{align*} Thus,  $\Phi$ is continuous on $S^{n-1}$, and there exists $\xi_{0}\in \sphere$ which  maximizes $\Phi$ over $\sphere$, due to the compactness of  $S^{n-1}$.

Let $A_{0}^{+} := \{x: \! \langle x, \xi_{0}\rangle \! >0\}$, $A_{0}:=\{x: \! \langle x, \xi_{0}\rangle \! =0\}$, and  $A_{0}^{-}:=\{x: \! \langle x, \xi_{0} \rangle \!<0\}$.   
We define $A_i$, $A_i^+$ and $A_i^-$ iteratively for $i=1, 2, \cdots, n-1$, as follows:  choose any $\xi_i\in A_{i-1} \cap \sphere$, and  let  $A_{i}^{+}:=\!\{x\!\in\! A_{i-1}:\!\langle x, \xi_{i}\rangle\! >0\}$, $A_{i} :=\! \{x\!\in\! A_{i-1}:\!\langle x, \xi_{i}\rangle =0\}$, and $ A_{i}^{-}:=\! \{x\!\in\! A_{i-1}:\!\langle x, \xi_{i}\rangle \! <0\}. $   Let 
$$A^{+}:=\bigcup_{i=0}^{n-1}A_{i}^{+} \ \ \mathrm{and} \ \  A^{-}:= \bigcup_{i=0}^{n-1}A_{i}^{-}.$$ Note that $A^+\cap A^{-}=\emptyset$, $A^+\cup A^{-}=\Rn\setminus\{o\}$, and $A^{-}=-A^{+}.$

Let $m^{+}:= \varrho(A^{+})+\frac{1}{2}\varrho(\{o\})$ and $m^{-}:=\varrho(A^{-})+\frac{1}{2}\varrho(\{0\})$, which gives   $m^{+}+m^{-}=1$. Without loss of generality, we assume that $\varrho$ is not the Dirac delta measure $\delta_{o}$ supported at $\{o\}$.  Define $\xi^{+}:= \frac{v^{+}}{|v^{+}|},$ where $$v^{+}= \int_{A^{+}} x\, \,d \varrho \neq o $$ due to the assumptions about $\varrho$. Clearly, \begin{align} 
v^{-}=\int _{A^{-}} x\,d\varrho=-v^{+}.\label{def-v--}\end{align}

We now claim the existence of   $\ell_{0}\in \R$ such that  $$\mu(\{x: \left\langle x, \xi^+  \right\rangle >\ell_{0}\}) \leq  m^{+} \ \ \mathrm{and} \ \ \mu(\{x: \left\langle x, \xi^+ \right\rangle <\ell_{0}\}) \leq  m^{-}.$$ To this end,  by \cite[Theorem 1.2.1]{Dur19}, the cumulative distribution function $$F(\ell) := \mu\{x:\left\langle x, \xi^{+} \right\rangle\leq \ell\} $$ is right-continuous and non-decreasing. Let $\ell_0=\sup \{t: \ F(t)\leq m^-\}$. Then, $$\mu\{x:\left\langle x, \xi^+ \right\rangle< \ell_0 \} =\lim_{t\rightarrow \ell_0^-}  F(t)\leq m^-. $$ Moreover, for any $t>\ell_0$, we get $F(t)>m^-.$ Due to the right-continuity of $F(\cdot)$, we get $$F(\ell_0)=\lim_{t\rightarrow \ell_0^+} F(t)\geq m^-.$$ This further gives $$\mu\{x:\left\langle x, \xi^+ \right\rangle >\ell_0\}=1-\mu\{x:\left\langle x, \xi^+ \right\rangle\leq \ell_{0}\}=1-F(\ell_0)\leq 1-m^-=m^+. $$ 
 Let $\mu=\mu^{+}+\mu^{-}$, where $\mu^{\pm}$ are nonnegative measures such that $ \mu^{\pm}(\mathbb{R}^{n}) = m^{\pm}$, and
$$
\mathrm{Supp}(\mu^{+})\subset \{x:\left\langle x, \xi^{+}\right\rangle\geq \ell_{0}\}\ \ \mathrm{and} \ \  \mathrm{Supp}(\mu^{-})\subset \{x:\left\langle x, \xi^{+}\right\rangle\leq \ell_{0}\}.
$$
Consider the following coupling $\gamma\in \Pi(\varrho,\mu)$ of $\varrho$ and $\mu$:
$$
\gamma:= \frac{1}{m^{+}}\Big( \varrho\mres A^{+}+\frac{1}{2} \varrho \mres \{o\}\Big)\otimes \mu^{+}+\frac{1}{m^{-}}\Big( \varrho\mres A^{-}+\frac{1}{2}\varrho\mres \{o\}\Big)\otimes \mu^{-},
$$ where $\varrho \mres E$ is the restriction of $\varrho $ to $E,$ and $\nu\otimes \mu$ is the product measure of $\nu$ and $\mu.$ Thus 
\begin{align*}\int_{\Rn\times \Rn} \left\langle x,y\right\rangle d\gamma 
&= \frac{1}{m^{+}}\bigg\langle  \int_{\Rn}  x \,d \Big( \varrho\mres A^{+}+\frac{1}{2} \varrho \mres \{o\}\Big),\int_{\Rn}  y \,d\mu^{+}(y)\bigg\rangle\\&\ \ \    +\frac{1}{m^{-}}\bigg\langle \int_{\Rn}  xd\Big( \varrho\mres A^{-}+\frac{1}{2}\varrho\mres \{o\}\Big),\int_{\Rn}  y \,d\mu^{-}(y)\bigg\rangle
\\&= \frac{1}{m^{+}}\bigg\langle \int_{A^{+}} x \,d\varrho,\int_{\Rn}  y \,d\mu^{+} \bigg\rangle+\frac{1}{m^{-}}\bigg\langle \int_{A^{-}}x\,d\varrho,\int_{\Rn} y\,d\mu^{-}\bigg\rangle\\&=\frac{1}{m^{+}}\bigg\langle v^+,\int_{\Rn} y \,d\mu^{+} \bigg\rangle+\frac{1}{m^{-}}\bigg\langle v^{-},\int_{\Rn} y\,d\mu^{-}\bigg\rangle.\end{align*}
Recall that $v^+=-v^{-}\neq o$ by \eqref{def-v--}.  Thus, with $\xi^+=\frac{v^+}{|v^+|}$, we get,  
\begin{align} \int_{\Rn\times \Rn} \left\langle x,y\right\rangle d\gamma  &=\frac{|v^{+}|}{m^{+}}\int _{\Rn} \left\langle y,\xi^{+}\right\rangle \,d\mu^{+}(y)-\frac{|v^{+}|}{m^{-}}\int_{\Rn}  \left\langle y, \xi^{+}\right\rangle \,d\mu^{-}(y)\nonumber \\&=\frac{|v^{+}|}{m^{+}}  \int_{\Rn} \left( \left\langle y, \xi^{+}\right\rangle-\ell_0\right) d \mu^{+}(y)-\frac{|v^{+}|}{m^{-}}  \int_{\Rn} \left( \left\langle y, \xi^{+}\right\rangle-\ell_0\right) d \mu^{-}(y) \nonumber \\&=|v^{+}|\int_{\Rn} \left |\left\langle y,\xi^{+}\right\rangle-\ell_0\right|d\bigg( \frac{\mu^{+}}{m^{+}}+\frac{\mu^{-}}{m^{-}}\bigg)\nonumber  \\&\geq|v^{+}|\int_{\Rn}  \big|\left\langle y,\xi^{+}\right\rangle-\ell_0\big| \,d\mu, \label{estimate-Phi}\end{align} where we have used  $m^{\pm}\leq 1$. 
As the support of  $\mu$ is not concentrated on any hyperplane, we have $\int_{\Rn} \left|\left\langle y, \xi^{+}\right\rangle-\ell_0\right| d \mu>0$.  Moreover, 
\begin{align}
\int_{\Rn} \left\langle x,\xi_{0}\right\rangle_{+} \,d \varrho(x)&=\int_{A_0^{+}}\left\langle x,\xi_{0}\right\rangle \,d \varrho(x)\nonumber\\&=\bigg\langle \int_{A_{0}^{+}}x \,d\varrho(x),\xi_{0}\bigg\rangle\leq \left|\int_{A_0^{+}} x\,d\varrho(x)\right| \nonumber \\ & \leq \left |\int_{A_{0}^{+}}x\,d\varrho(x)+\int_{A^{+}\setminus A_{0}^{+}}x\,d\varrho(x)\right| \nonumber \\ & = \left |\int_{A ^{+}}x\,d\varrho(x) \right|  = |v^{+}|<\infty.    \label{upper bound-T}
\end{align} The second inequality is by the fact that $\int_{A_0^{+}} x\,d\varrho$ is oriented as $\xi_{0}$, as otherwise let $\xi_{0}'$ be the orientation of $\int_{A_0^{+}} x\,d\varrho$, then 
\begin{align*}\int_{\mathbb{R}^{n}}\left\langle x,\xi_{0}\right\rangle_{+}\,d\varrho(x) &= \bigg\langle \int_{A_{0}^{+}}x \,d\varrho(x),\xi_{0}\bigg\rangle<\bigg\langle \int_{A_{0}^{+}}x\,d\varrho(x),\xi_{0}'\bigg\rangle \\&= \int_{A_{0}^{+}}\left\langle x,\xi_{0}'\right\rangle \,d\varrho(x) \leq \int_{\{x:\left\langle x,\xi_{0}'\right\rangle>0\}}\left\langle x,\xi_{0}'\right\rangle \,d\varrho(x),\end{align*} a contraction to the optimality of $\xi_0$ for $\Phi(\cdot)$ defined in \eqref{definition-Phi}. Additionally, another reason for the second inequality in \eqref{upper bound-T} is $$\int_{A^{+} \backslash A_0^{+}} x\,d\varrho \in A^{+}\setminus A_{0}^{+}\in \xi_{0}^{\perp}.$$ 

 The desired result is an immediate consequence of the combination of \eqref{max-Big-Tau}, \eqref{BoundOfFirstMoment},  \eqref{estimate-Phi} and \eqref{upper bound-T}:
 \begin{align*}\mathcal{T}(\varrho,\mu)&\geq \int_{\Rn\times \Rn}\langle x,y\rangle\,d\gamma(x,y) \\&\geq |v^{+}|\int_{\mathbb{R}^{n}}\big|\langle y,\xi^{+}\rangle-\ell_{0}|\,d\mu(y)
 \\&\geq \left( \int_{\mathbb{R}^{n}}\big|\langle y,\xi^{+}\rangle-\ell_{0}|\,d\mu(y)\right)\int_{\mathbb{R}^{n}}\langle x,\xi_{0}\rangle_{+}\,d\varrho(x) \\&\geq\frac{1}{2n}\left( \int_{\mathbb{R}^{n}}\big|\langle y,\xi^{+}\rangle-\ell_{0}|\,d\mu(y)\right)\int_{\Rn}|x|\,d\varrho(x)
 \\
 &\geq c\int_{\mathbb{R}^{n}}|x|\,d\varrho(x)
 \end{align*}
 where $c=c_{\mu}$ is given by the positive number $\frac{1}{2n}\inf\{\int_{\mathbb{R}^{n}}|\left\langle y,e\right\rangle-l|\,d\mu(y):e\in S^{n-1},l\in\mathbb{R}\}$.
 \end{proof}

\section{The variational formula}\label{Section:3} 
In this section, we will compute the integral expression for $\JD(f,g)$, which is defined as follows.

\begin{definition}\label{Definition-first-variation} Let  $-\frac{1}{n}<\alpha < 0$, $f\in \CA(\Rn)$, and $g$ be an $\alpha$-concave function. Define $\JD(f,g)$ as the first variation of the total mass $\Ja(f)$ along $g$, in terms of the combination \eqref{a-combination}, by: 
    \begin{align} 
        \JD  (f,g):=\lim_{t\to 0^+}\frac{\Ja(f\oplus_{\alpha} t\bigcdot_\alpha g)-\Ja(f)}{t}, \label{def-1st-variation}
    \end{align} provided that the above limit exists and is finite. 
\end{definition}
 It can be  easily checked from \eqref{a-combination} and \eqref{def-1st-variation} that, for any $\beta>0$, 

 \begin{align} 
        \JD  (f, \beta\bigcdot_{\alpha} g):&=\lim_{t\to 0^+}\frac{\Ja(f\oplus_{\alpha} t\bigcdot_\alpha (\beta\bigcdot_{\alpha} g))-\Ja(f)}{t} \nonumber \\&=\beta \lim_{t\to 0^+}\frac{\Ja(f\oplus_{\alpha} (\beta t)\bigcdot_\alpha  g)-\Ja(f)}{\beta t} \nonumber \\ &=\beta \cdot  \JD(f, g). \label{def-1st-variation-beta}
    \end{align}
In view of \eqref{def-1st-variation-beta}, we get, for any $\beta>0$,   \begin{align}\label{3.1}
        \JD(f,\beta\bigcdot_\alpha f)   = \beta \JD(f,f).
    \end{align}

We begin by calculating the variation of $f\in \CA(\Rn)$ with itself. Recall that $f\in \CA(\Rn)$ if $$f(x) = (1-\alpha \varphi(x))^\frac{1}{\alpha} =\gen(\varphi(x))$$ with $\gen(\cdot)$ the generator function given in \eqref{generate-function-alpha}, and the base function $\varphi$ is proper, nonnegative, lower semi-continuous, and  coercive.

\begin{proposition}\label{VarFormulaWithItself}
Let $-\frac{1}{n}<\alpha < 0$ and $f  \in \CA(\Rn)$. Then
 \begin{align}\label{3.1-no-beta}
        \JD(f,f)=  n\Ja(f) - \int_{\Rn}\varphi(x)f(x)^{1-\alpha}\,dx.
    \end{align} Moreover, $\JD(f,f)\in (-\infty,+\infty)$.
\end{proposition}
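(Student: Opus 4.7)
The plan is to reduce $J(f\oplus_\alpha t\bigcdot_\alpha f)$ to a one-parameter integral that can be differentiated under the integral sign via the dominated convergence theorem. The starting observation is that, by the identity $(\lambda\varphi)^*(y)=\lambda\varphi^*(y/\lambda)$ from \eqref{property} together with $\varphi^{**}=\varphi$ (as $\varphi\in\conv(\Rn)$), we get
\[
(\varphi^*+t\varphi^*)^*(x)=\bigl((1+t)\varphi^*\bigr)^*(x)=(1+t)\,\varphi\!\left(\tfrac{x}{1+t}\right).
\]
Substituting into \eqref{a-combination} and performing the change of variable $y=x/(1+t)$ yields the clean formula
\[
J_\alpha(f\oplus_\alpha t\bigcdot_\alpha f)=(1+t)^{n}\int_{\Rn}\bigl(1-\alpha(1+t)\varphi(y)\bigr)^{1/\alpha}dy.
\]
Call this function $F(1+t)$; the identity $\JD(f,f)=F'(1^+)$ reduces the task to computing a one-sided derivative of the product of $s^{n}$ and an integral depending on $s$.

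Next I would split the incremental quotient
\[
\frac{F(s)-F(1)}{s-1}=s^{n}\int_{\Rn}\frac{G(s,y)-G(1,y)}{s-1}dy+\frac{s^{n}-1}{s-1}\int_{\Rn}G(1,y)dy,
\]
where $G(s,y)=(1-\alpha s\varphi(y))^{1/\alpha}$. The second term trivially tends to $nJ_\alpha(f)$ as $s\to 1^+$. For the first term, the mean value theorem gives a pointwise bound by $|\partial_s G(\xi,y)|=\varphi(y)(1-\alpha\xi\varphi(y))^{1/\alpha-1}$ for some $\xi\in(1,s)$. Since $\varphi\ge 0$ and $\alpha<0$, the function $s\mapsto(1-\alpha s\varphi)^{1/\alpha-1}$ is decreasing for $s\ge 1$ (because the exponent $1/\alpha-1$ is negative), so this is dominated by $\varphi(y)(1-\alpha\varphi(y))^{1/\alpha-1}=\varphi(y)f(y)^{1-\alpha}$.

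The crucial integrability step is to dominate this further: because $\varphi\ge 0$ and $\alpha<0$, one has $|\alpha|\varphi\le 1-\alpha\varphi=f^{\alpha}$, whence
\[
\varphi(y)f(y)^{1-\alpha}\le\frac{1}{|\alpha|}f(y)^{\alpha}f(y)^{1-\alpha}=\frac{1}{|\alpha|}f(y),
\]
which is integrable by \eqref{finite -p=0} in Lemma \ref{Lem2.2}. This bound is the one step I expect to be the mild technical heart of the argument: it simultaneously provides a dominating function for DCT and shows that the right-hand side of \eqref{3.1-no-beta} is finite. Dominated convergence then sends the first term to $-\int_{\Rn}\varphi(y)f(y)^{1-\alpha}dy$, and combining with the second term yields the stated formula. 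Finiteness of $\JD(f,f)$ follows from $J_\alpha(f)<\infty$ and $0\le\int_{\Rn}\varphi f^{1-\alpha}dx\le J_\alpha(f)/|\alpha|<\infty$.
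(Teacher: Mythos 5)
Your proposal is correct and follows essentially the same route as the paper: the identity $((1+t)\varphi^*)^*(x)=(1+t)\varphi(x/(1+t))$ plus the change of variables, the same splitting of the difference quotient into an $n\Ja(f)$ part and an integral part, the mean value theorem with monotonicity in $s$, the key domination $\varphi f^{1-\alpha}\le \frac{1}{|\alpha|}f$ (the paper's $\frac{\varphi}{1-\alpha\varphi}\le-\frac{1}{\alpha}$), and dominated convergence via Lemma \ref{Lem2.2}. The only cosmetic difference is your regrouping of the quotient, which lets the $\frac{s^n-1}{s-1}$ factor multiply the fixed integral $\Ja(f)$ and thus avoids the paper's separate monotone-convergence step for $\lim_{t\to0^+}\int(1-\alpha(t+1)\varphi)^{1/\alpha}\,dx$.
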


\begin{proof}
Let $\overline{f}_{t}:= f\oplus_{\alpha} t\bigcdot_\alpha  f$ for $t>0$. From \eqref{a-combination}, we have
\begin{align*}\overline{f}_{t} &= \gen\Big((\varphi^*+ t\varphi^*)^*\Big)=\gen\Big(\big((1+t)\varphi^*\big)^*\Big).  \end{align*} It follows from \eqref{property} that 
    \begin{align*}\overline{f}_{t}(x) &=  \gen\bigg(\!(t+1)\varphi\Big(\frac{x}{t+1}\Big)\!\bigg).\end{align*}
By \eqref{generate-function-alpha} and the substitution $x=(t+1)\widetilde{x}$, we get 
    \begin{equation}\label{t-beta}
        \Ja(\overline{f}_{t})  =\int_{\Rn}\Big( 1-\alpha (t+1)\varphi\Big( \frac{x}{t+1}\Big)\Big)^{\frac{1}{\alpha}}\,dx = (t+1)^{n}\int_{\Rn}\big(1-\alpha(t+1)\varphi(\widetilde{x})\big)^{\frac{1}{\alpha}}d\widetilde{x}.
    \end{equation} 
As $\varphi\geq 0$, the function $(1-\alpha(t+1)\varphi)^{\frac{1}{\alpha}}$ is increasing as $t\downarrow 0$. Employing the monotone convergence theorem, we have  \begin{equation}\label{mono} \lim_{t\to 0^{+}}\int_{\Rn}(1-\alpha(t+1)\varphi(x))^{\frac{1}{\alpha}}\,dx =\Ja(f). \end{equation} Consequently, we can have   \begin{equation}\label{3.2}
        \begin{aligned}[b]
            \JD(f, f) &= \lim_{t\to 0^{+}}\int_{\Rn}\frac{\overline{f}_{t}(x)-f(x)}{t}\,dx\\
            &= \lim_{t\to 0^{+}}\frac{(t+1)^{n}-1}{t} \lim_{t\to 0^{+}}\int_{\Rn}(1-\alpha(t+1)\varphi(x))^{\frac{1}{\alpha}}\,dx \\
            &\quad + \lim_{t\to 0^{+}}\int_{\Rn}\frac{(1-\alpha(t+1)\varphi(x))^{\frac{1}{\alpha}}-(1-\alpha\varphi(x))^{\frac{1}{\alpha}}}{t}\,dx\\
            &= n \Ja(f)+\lim_{t\to 0^{+}}\int_{\Rn}\frac{(1-\alpha(t+1)\varphi(x))^{\frac{1}{\alpha}}-(1-\alpha\varphi(x))^{\frac{1}{\alpha}}}{t}\,dx.
        \end{aligned}
    \end{equation} 
Let $0\leq t\leq 1$. By the mean value theorem,    for $x\in\dom\varphi$ there exists   $s_x\in(0,t)$, such that 
\begin{align}\label{mean}
\frac{(1-\alpha\varphi(x))^{\frac{1}{\alpha}}-(1-\alpha(t+1)\varphi(x))^{\frac{1}{\alpha}}}{t}
&= \varphi(x)(1\!-\!\alpha(s_x+ 1)\varphi(x))^{\frac{1-\alpha}{\alpha}}\nonumber \\ & \leq \varphi(x)(1-\alpha\varphi(x))^{\frac{1-\alpha}{\alpha}}.  
\end{align}   On the other hand,
since $\frac{\tau}{1-\alpha \tau}$ increases to $-\frac{1}{\alpha}$ as $\tau\in[0,\infty)$, for any $x\in \dom\varphi$, we have
\begin{align} \label{comare-a} \frac{\varphi(x)}{1-\alpha\varphi(x)}\leq -\frac{1}{\alpha}.\end{align} Both \eqref{mean} and \eqref{comare-a} also hold for $x\notin \dom(\varphi)$, in which the left hand side of \eqref{mean} is $0$, and the left hand side of \eqref{comare-a} is viewed as $\lim_{\tau\rightarrow +\infty}\frac{\tau}{1-\alpha\tau}=-\frac{1}{\alpha}$, due to $\varphi(x)=+\infty$.  Therefore, for any $x\in \Rn,$
\begin{equation}\label{minusalpha}
\varphi(x)(1-\alpha\varphi(x))^{\frac{1}{\alpha}-1}=\frac{\varphi(x)}{1-\alpha\varphi(x)}\big(1-\alpha\varphi(x)\big)^{\frac{1}{\alpha}}\leq -\frac{1}{\alpha}\big(1-\alpha\varphi(x)\big)^{\frac{1}{\alpha}}.
\end{equation} Combining \eqref{mean} and \eqref{minusalpha}, we have
\begin{align} \label{upper-b-int} 0\leq\frac{(1-\alpha\varphi(x))^{\frac{1}{\alpha}}-(1-\alpha(t+1)\varphi(x))^{\frac{1}{\alpha}}}{t}
\leq-\frac{f(x)}{\alpha}.\end{align} 
It follows from  Lemma \ref{Lem2.2} and  the dominated convergence theorem that
 \begin{align}
\int_{\Rn}\varphi(x)f(x)^{1-\alpha}\,dx&=-\int_{\Rn}\lim_{t\to 0^{+}}\frac{(1-\alpha(t+1)\varphi(x))^{\frac{1}{\alpha}}-(1-\alpha\varphi(x))^{\frac{1}{\alpha}}}{t}\,dx\nonumber\\ &=-\lim_{t\to 0^{+}}\int_{\Rn}\frac{(1-\alpha(t+1)\varphi(x))^{\frac{1}{\alpha}}-(1-\alpha\varphi(x))^{\frac{1}{\alpha}}}{t}\,dx.  \label{3.3}
    \end{align} Hence, formula \eqref{3.1-no-beta} follows from \eqref{3.2} and \eqref{3.3}. The finiteness of $\JD(f,f)$ is a direct consequence of Lemma \ref{Lem2.2} (in particular, $\Ja(f)\in [0, \infty)$ and \eqref{minusalpha}.
\end{proof}
 
\vskip 2mm 
\begin{theorem}\label{prop5}
    Let $-\frac{1}{n}<\alpha < 0$, $\beta_{1}\geq  0$ and $\beta_{2}\in\R$ be constants.  Let $f \in \CA(\Rn)$ with $o\in \mathrm{int}(K_f)$, and \begin{align} 
            \widehat{f_{t}}   = \big( 1-\alpha [(1+\beta_{1}t)\varphi^*]^*+ \alpha\beta_{2}t\big)^{\frac{1}{\alpha}}, \label{define-f-hat-1}
        \end{align} for $t>0$ small enough.       
        Then,  the following formula holds: 
    \begin{equation}\label{3.4}
       \mathcal{I}_{\alpha}(f)=  \lim_{t\to 0^{+}}\int_{\Rn}\frac{\widehat{{f}_{t}}(x)-f(x)}{t}\,dx  =\beta_{2}\int_{\Rn}f(x)^{1-\alpha}\,dx+\beta_{1}\JD(f,f).
    \end{equation}
    Moreover, $ \mathcal{I}_{\alpha}(f) \in (-\infty,+\infty)$.
\end{theorem}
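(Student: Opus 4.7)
The plan is to reduce $\widehat{f_t}$ to a concrete integrand via the scaling identity \eqref{property} for the Legendre transform, perform a dilation change of variables so that the perturbation enters only through a one-parameter family of bases, and then pass the limit under the integral sign using a dominated convergence argument supplied by Lemma \ref{Lem2.2}, the pointwise estimate \eqref{minusalpha}, and Proposition \ref{VarFormulaWithItself}. Specifically, I would first apply $(\lambda \varphi^*)^*(x) = \lambda\varphi(x/\lambda)$ (which is valid because $\varphi\in\conv(\Rn)$) with $\lambda = 1+\beta_1 t$, and substitute $x = (1+\beta_1 t)y$ to obtain
\begin{equation*}
\int_{\Rn}\widehat{f_t}(x)\,dx = (1+\beta_1 t)^n\!\int_{\Rn}\! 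G_t(y)\,dy,\qquad G_t(y):=\big(1-\alpha(1+\beta_1 t)\varphi(y)+\alpha\beta_2 t\big)^{1/\alpha}.
\end{equation*}

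Splitting the difference quotient as
\begin{equation*}
\frac{\int\widehat{f_t}\,dx-\Ja(f)}{t} = \frac{(1+\beta_1 t)^n-1}{t}\int_{\Rn}G_t\,dy+\int_{\Rn}\frac{G_t(y)-f(y)}{t}\,dy
\end{equation*}
separates the dilation effect (whose prefactor tends to $n\beta_1$) from the genuine variation of the base. Pointwise, $G_t\to f$ and a direct differentiation gives
\begin{equation*}
\tfrac{d}{dt}\big|_{t=0^+}G_t(y) = \tfrac{1}{\alpha}(1-\alpha\varphi(y))^{1/\alpha-1}(\alpha\beta_2-\alpha\beta_1\varphi(y)) = f(y)^{1-\alpha}\big(\beta_2-\beta_1\varphi(y)\big).
\end{equation*}

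The main obstacle is producing integrable dominating functions uniformly in small $t$. Using $\varphi\geq 0$, $\alpha<0$, $\beta_1\geq 0$, the only wrong-signed term inside the base of $G_t$ is $\alpha\beta_2 t$, and only in the regime $\beta_2>0$. Restricting to $t\in(0,t_0]$ with $t_0\leq 1/(2|\alpha|\beta_2)$ (when $\beta_2>0$) and using $1-\alpha\varphi\geq 1$, I would establish the uniform floor
\begin{equation*}
1-\alpha(1+\beta_1 s)\varphi(y)+\alpha\beta_2 s \geq \tfrac{1}{2}\big(1-\alpha\varphi(y)\big)\qquad\text{for all }s\in[0,t_0],\ y\in\Rn.
\end{equation*}
Since $1/\alpha<0$ and $1/\alpha-1<0$, this yields $G_t(y)\leq C f(y)$ and, via the mean value theorem applied to $s\mapsto G_s(y)$, the crucial bound
\begin{equation*}
\Big|\frac{G_t(y)-f(y)}{t}\Big| \leq C f(y)^{1-\alpha}\big(|\beta_2|+\beta_1\varphi(y)\big)
\end{equation*}
for a constant $C=C(\alpha,\beta_2)$. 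Integrability of $f$ and $f^{1-\alpha}$ comes from Lemma \ref{Lem2.2}, while $\varphi f^{1-\alpha}\leq-\tfrac{1}{\alpha}f\in L^1(\Rn)$ follows from \eqref{minusalpha} together with Lemma \ref{Lem2.2}.

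Dominated convergence then delivers
\begin{equation*}
\mathcal{I}_\alpha(f) = n\beta_1\Ja(f)+\beta_2\int_{\Rn}f^{1-\alpha}dy-\beta_1\int_{\Rn}\varphi f^{1-\alpha}dy,
\end{equation*}
and Proposition \ref{VarFormulaWithItself} identifies $n\Ja(f)-\int \varphi f^{1-\alpha}dy = \JD(f,f)$, giving \eqref{3.4}. Finiteness of $\mathcal{I}_\alpha(f)$ is immediate from the integrability estimates above together with $\JD(f,f)\in(-\infty,+\infty)$ from Proposition \ref{VarFormulaWithItself}. The genuinely delicate point throughout is the uniform lower bound on the perturbed base in the $\beta_2>0$ regime, where the sign of $\alpha\beta_2 t$ is adverse; that is controlled by the explicit smallness condition on $t_0$ described above.
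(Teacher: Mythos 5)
Your argument is correct, and it takes a genuinely different route from the paper's. The paper writes $\widehat{f_t}-f=(\widehat{f_t}-\widetilde{f_t})+(\widetilde{f_t}-f)$ with $\widetilde{f_t}=f\oplus_{\alpha}(\beta_1 t)\bigcdot_{\alpha}f$: the $\beta_2$-piece is controlled by the convexity sandwich \eqref{basic-convex-pro} for $g(\lambda)=(1+\lambda)^{1/\alpha}$ together with the \emph{generalized} dominated convergence theorem (with monotone-convergence-controlled dominating families), the pointwise limit of $\widehat{f_t}$ is obtained from \eqref{continuity-star} (this is where $o\in\mathrm{int}(K_f)$ enters), and the $\beta_1$-piece is dispatched by the homogeneity identity \eqref{3.1} and Proposition \ref{VarFormulaWithItself}. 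You instead pull the dilation out of $\widehat{f_t}$ directly via \eqref{property} and the substitution $x=(1+\beta_1 t)y$, treat $\beta_1$ and $\beta_2$ simultaneously through the explicit family $G_t$, and close the argument with a single mean-value-theorem bound $\big|\tfrac{G_t-f}{t}\big|\leq C f^{1-\alpha}(|\beta_2|+\beta_1\varphi)$, whose integrability is exactly Lemma \ref{Lem2.2} plus \eqref{minusalpha}; Proposition \ref{VarFormulaWithItself} is then used only to recognize $n\Ja(f)-\int\varphi f^{1-\alpha}\,dx=\JD(f,f)$. Your uniform floor $1-\alpha(1+\beta_1 s)\varphi+\alpha\beta_2 s\geq\tfrac12(1-\alpha\varphi)$ for small $t$ is the right substitute for the paper's bound \eqref{control-beta-2} and is verified correctly (using $\beta_1\geq 0$, $\varphi\geq 0$, $1-\alpha\varphi\geq 1$); the only point worth stating explicitly is that for $y\notin\dom\varphi$ both $G_t(y)$ and $f(y)$ vanish, so the difference quotient is $0$ there and the MVT is only needed on $\dom\varphi$ (the paper makes the analogous remark). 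What each approach buys: yours is more elementary and self-contained (classical DCT, one transparent dominating function, and in fact it never uses the hypothesis $o\in\mathrm{int}(K_f)$, since the pointwise convergence of $G_t$ is immediate from the explicit formula), while the paper's sandwich/generalized-DCT formulation is set up so that the same estimates can be reused later (Lemmas \ref{lem6}--\ref{lem7}, Theorem \ref{thm8}), where the perturbation $\psi^*$ is no longer a multiple of $\varphi^*$ and no such explicit change of variables is available.
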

\begin{proof}  Note that $\varphi\geq 0$. Then, $\inf \varphi$ exists and is nonnegative. It can be checked that, due to   $-\alpha \inf \varphi\geq 0$ and $\beta_1\geq 0$,
    \begin{align*}
    -\alpha [(1+\beta_{1}t)\varphi^*]^*(x)+\alpha\beta_{2}t
    &=\bigg(\!\alpha\beta_{2}-\alpha\beta_{1}\varphi\Big(\frac{x}{1+\beta_{1}t}\Big)\!\bigg)t - \alpha\varphi\Big(\frac{x}{1+\beta_{1}t}\Big)\\
    &\geq \Big(\alpha\beta_{2}-\alpha\beta_{1}\inf\varphi\Big)t - \alpha\inf\varphi. 
    \end{align*}  Thus,   there exists $t_0$ (independent of $x\in \Rn$), such that for all $0<t<t_0$ and any $\beta_2$, we have  
    \begin{align} \label{control-beta-2}
      -\alpha [(1+\beta_{1}t)\varphi^*]^*+ \alpha\beta_{2}t\geq -\frac{1}{2}.  
    \end{align} 

  For $x\in \Rn$, let  \begin{align} s&=s(x,t):=-\alpha[(1+\beta_{1}t)\varphi^*]^*(x), \label{definition-s-x}  \\  
\tau &=\tau(x,t): =\alpha \beta_{2}t-\alpha[(1+\beta_{1}t)\varphi^*]^*(x). \label{definition-t-x}\end{align} It can be easily checked from \eqref{control-beta-2} that $s\geq -\frac{1}{2}$ and $\tau\geq-\frac{1}{2}$ for $0<t<t_0$.    Let $g(\lambda):=(1+\lambda)^\frac{1}{\alpha}$. Then,  for $0<t<t_0$,  
\begin{align}\label{relation-g-tilde} \widetilde{f_t}(x)&= \left( 1-\alpha [(\beta_{1}t+1)\varphi^*]^*(x)\right)^{\frac{1}{\alpha}}=g(s),  \\ \widehat{f_{t}}(x)& = \big( 1-\alpha [(1+\beta_{1}t)\varphi^*]^*(x)+ \alpha\beta_{2}t\big)^{\frac{1}{\alpha}}=g(\tau) \label{define-f-hat} \end{align} are both well-defined functions.  

Without loss of generality, we always let $0<t<t_0.$    
 The convexity of $g$ in $(-1,+\infty)$ implies
\begin{align} \label{basic-convex-pro} g'(s)(\tau-s)\leq g(\tau)-g(s)\leq g'(\tau)(\tau-s).\end{align}  Thus, for any $x\in \Rn$ and for $t>0$,   by \eqref{relation-g-tilde},  \eqref{define-f-hat} and  \eqref{basic-convex-pro}, we have, \begin{align*}
      \frac{g'(s)(\tau-s)}{t}\leq\frac{\widehat{f_t}(x)-\widetilde{f_t}(x)}{t}  & =\frac{g(\tau)-g(s) }{t} \leq  \frac{g'(\tau)(\tau-s)}{t}. 
\end{align*}   
It follows from  \eqref{definition-s-x} and \eqref{definition-t-x} that 
\begin{align}\label{bound-left-1}
    \beta_{2}\Big(1-\alpha [(1+\beta_{1}t)\varphi^*]^*(x)\Big)^{\frac{1}{\alpha}-1} &\leq \frac{\widehat{f_t}(x)-\widetilde{f_t}(x)}{t} \nonumber \\ &\leq\!\beta_{2}\Big(1-\alpha [(1+\beta_{1}t)\varphi^*]^*(x)+ \alpha\beta_{2}t\Big)^{\frac{1}{\alpha}-1}.
\end{align} 
According to the lower semi-continuity of $\varphi$ with  $o\in \mathrm{int}(\dom\varphi)$ and \cite[Lemma 1.6.11]{Sch13}, we have, for $x\in \overline{\dom \varphi}$, 
    \begin{align}\label{continuity-star}
    \lim_{t\to0^{+}}((1+t\beta_1)\varphi^*)^*(x)
    =\lim_{t\to0^{+}}\Big((1+\beta_1t)\varphi\Big(\frac{x}{1+\beta_1t}\Big)\Big)=\varphi(x).
    \end{align}  Taking the limit  in \eqref{bound-left-1} as $t\rightarrow 0^+$, we get \begin{align}\label{bound-left-limit-1}
     \lim_{t\rightarrow 0^+} \frac{\widehat{f_t}(x)-\widetilde{f_t}(x)}{t} =\beta_{2}\big(1-\alpha \varphi(x)\big)^{\frac{1}{\alpha}-1}=\beta_2 f^{1-\alpha}.
\end{align}

Following the proofs of \eqref{t-beta} and \eqref{mono}, we can get the following by using the monotone convergence theorem: 
\begin{align}\label{leftdominated} 
       \lim_{t\to 0^+}\!\int_{\Rn}\!\!\Big(\!1\!-\!\alpha [(1\!+\!\beta_{1}t)\varphi^*]^*(x)\!\Big)^{\frac{1}{\alpha}-1}\!\,dx 
       &= \!\! \int_{\Rn}\lim_{t\to 0^+}\!\Big(\!1\!-\!\alpha [(1+\beta_{1}t)\varphi^*]^*(x)\!\Big)^{\frac{1}{\alpha}-1}\!\,dx\nonumber\\
       &=\int_{\Rn}\Big(1-\alpha \varphi(x)\Big)^{\frac{1}{\alpha}-1}\,dx.
       \end{align}
Let  $0<t_1<t_0$ be a constant small enough such that  $2|\alpha\beta_2|t_1\leq 1$ (in the case when $\beta_2=0$, we just let $t_1=t_0$).  For all $0<t<t_1$, we have  
\begin{align*}
0&\leq \Big(1-\alpha [(1+\beta_{1}t)\varphi^*]^*(x)+ \alpha\beta_{2}t\Big)^{\frac{1}{\alpha}-1}\\
&\leq
\Big(1-\alpha [(1+\beta_{1}t)\varphi^*]^*(x)-
\frac{1}{2}\Big)^{\frac{1}{\alpha}-1}\\ &=\Big(
\frac{1}{2}-\alpha [(1+\beta_{1}t)\varphi^*]^*(x)\Big)^{\frac{1}{\alpha}-1}:=h_t(x).
\end{align*} Again, following the proofs of \eqref{t-beta} and \eqref{mono},  by using the monotone convergence theorem, we can get the following:
       \begin{align*}\label{3.6} 
       \lim_{t\to 0^+}\int_{\Rn}h_t(x)\,dx = \int_{\Rn}\lim_{t\to 0^+}h_t(x)\,dx<+\infty.
       \end{align*} It follows from  the generalized  dominated convergence theorem that \begin{align*}
           \lim_{t\to 0^+}\!\int_{\Rn}\!\!\! \Big(\!1\!-\!\alpha [(1\!+\!\beta_{1}t)\varphi^*]^*\!+\! \alpha\beta_{2}t\!\Big)^{\frac{1}{\alpha}-1}\!\! dx \!=\! \!\int_{\Rn}\lim_{t\to 0^+}\!\Big(\!1\!-\!\alpha [(1\!+\!\beta_{1}t)\varphi^*]^*\!+\! \alpha\beta_{2}t\!\Big)^{\frac{1}{\alpha}-1}\!\!dx.
       \end{align*}   Together with  \eqref{bound-left-1},   \eqref{bound-left-limit-1}, \eqref{leftdominated} and  the generalized  dominated convergence theorem, we get   \begin{align*}
     \lim_{t\to 0^{+}}\int_{\Rn}\frac{\widehat{{f}_{t}}(x)-\widetilde {{f}_{t}}(x)}{t}\,dx  =\int_{\Rn}\lim_{t\to 0^{+}}\frac{\widehat{{f}_{t}}(x)-\widetilde {{f}_{t}}(x)}{t}\,dx
=\beta_{2}\int_{\Rn}f(x)^{1-\alpha}\,dx.
    \end{align*}   
 Together with (\ref{3.1}), we get 
    \begin{align*}
        \lim_{t\to 0^{+}}\int_{\Rn}\frac{\widehat{{f}_{t}}(x)-f(x)}{t}\,dx  
        &=\lim_{t\to 0^{+}}\int_{\Rn}\frac{\widehat{{f}_{t}}(x)-\widetilde {{f}_{t}}(x)}{t}\,dx+\lim_{t\to 0^{+}}\int_{\Rn}\frac{\widetilde {{f}_{t}}(x)-f(x)}{t}\,dx\\ &=\beta_{2}\int_{\Rn}f(x)^{1-\alpha}\,dx+\beta_{1}\JD(f,f).
    \end{align*} This concludes the proof of \eqref{3.4}. The fact that  $\mathcal{I}_{\alpha}(f)\in (-\infty,+\infty)$ follows immediately from   Lemma \ref{Lem2.2} and Proposition \ref{VarFormulaWithItself}.  \end{proof}
The following corollary is a direct consequence of  Definition \ref{Definition-first-variation} and Theorem \ref{prop5}.  

    \begin{corollary} 
    Let $-\frac{1}{n}<\alpha < 0$ and $f \in \CA(\Rn)$ with $o\in \mathrm{int}(K_f)$. Let $\beta_{1}>  0$ and $\beta_{2}\in\R$ be constants  such that the function  \begin{align*} \widehat{f} := \big(1-\alpha (\beta_{1}\varphi^*+\beta_{2} )^* \big)^{\frac{1}{\alpha}} 
\end{align*} 
    is well-defined.   Then,  the following formula holds: 
    \begin{equation*} 
        \JD(f,\widehat{f}) = \beta_{1}\JD(f,f) + \beta_{2}\int_{\Rn}f(x)^{1-\alpha}\,dx.
    \end{equation*}
    Moreover, $\JD(f,\widehat{f})\in (-\infty,+\infty)$.
\end{corollary}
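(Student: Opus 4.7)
The plan is to recognize $\widehat{f}$ as an $\alpha$-concave function whose base has a particularly simple Legendre transform, and then show that the $\alpha$-combination $f\oplus_{\alpha} t\bigcdot_{\alpha}\widehat{f}$ coincides for small $t>0$ with the function $\widehat{f_{t}}$ introduced in Theorem \ref{prop5}. Once that identification is made, the corollary becomes an immediate rewriting of \eqref{3.4} in terms of the first variation.

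First I would set $\widehat{\psi}:=(\beta_{1}\varphi^{*}+\beta_{2})^{*}$, so that $\widehat{f}=\Psi_{\alpha}(\widehat{\psi})$. Because $\beta_{1}>0$ and $\varphi^{*}$ is convex and lower semi-continuous, the function $\beta_{1}\varphi^{*}+\beta_{2}$ is convex and lower semi-continuous as well; the hypothesis that $\widehat{f}$ is well-defined forces $\widehat{\psi}$ to be proper, hence $\beta_{1}\varphi^{*}+\beta_{2}$ is proper. By biconjugation this gives $\widehat{\psi}^{*}=(\beta_{1}\varphi^{*}+\beta_{2})^{**}=\beta_{1}\varphi^{*}+\beta_{2}$. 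Plugging into the definition \eqref{a-combination} of the $\alpha$-combination and using the shift identity $(\eta-c)^{*}=\eta^{*}+c$ from \eqref{property}, I obtain
\[
f\oplus_{\alpha} t\bigcdot_{\alpha}\widehat{f}=\Psi_{\alpha}\bigl((\varphi^{*}+t\widehat{\psi}^{*})^{*}\bigr)=\Psi_{\alpha}\bigl(((1+t\beta_{1})\varphi^{*}+t\beta_{2})^{*}\bigr)=\Psi_{\alpha}\bigl([(1+t\beta_{1})\varphi^{*}]^{*}-t\beta_{2}\bigr),
\]
which is precisely $\widehat{f_{t}}$ in \eqref{define-f-hat-1}.

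Given this identification, the definition \eqref{def-1st-variation} of $\JD$ yields
\[
\JD(f,\widehat{f})=\lim_{t\to 0^{+}}\int_{\Rn}\frac{\widehat{f_{t}}(x)-f(x)}{t}\,dx=\mathcal{I}_{\alpha}(f),
\]
and Theorem \ref{prop5} evaluates this limit as $\beta_{1}\JD(f,f)+\beta_{2}\int_{\Rn}f^{1-\alpha}\,dx$; the same theorem delivers finiteness of $\mathcal{I}_{\alpha}(f)$, hence of $\JD(f,\widehat{f})$. The only place where one has to be careful is the biconjugation step: it requires propriety of $\beta_{1}\varphi^{*}+\beta_{2}$, which follows from the standing assumption that $\widehat{f}$ is well-defined (so that $(\beta_{1}\varphi^{*}+\beta_{2})^{*}>\tfrac{1}{\alpha}$ on a nonempty set), together with the convexity and lower semi-continuity that come for free. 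Beyond this, the argument is a direct unwinding of definitions, so no substantive obstacle is anticipated.
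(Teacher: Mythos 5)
Your proposal is correct and matches the paper's intended argument: the paper states this corollary as a direct consequence of Definition \ref{Definition-first-variation} and Theorem \ref{prop5}, and the content of that is exactly your identification $f\oplus_{\alpha}t\bigcdot_{\alpha}\widehat{f}=\widehat{f_t}$ via $\widehat{\psi}^{*}=(\beta_1\varphi^{*}+\beta_2)^{**}=\beta_1\varphi^{*}+\beta_2$ and the shift rule \eqref{property}, after which \eqref{3.4} gives the formula and the finiteness.
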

The following lemma is needed.
\begin{lemma}\label{lem6}
Let $-\frac{1}{n}<\alpha < 0$, $\beta_{1}\geq  0$ and $\beta_{2}\in\R$  be given constants. Assume that  $f \in \CA(\Rn)$ with $o\in \mathrm{int}(K_f)$. For $t>0$ small enough, let   $\widehat{f_t}$ be defined as in \eqref{define-f-hat-1}.   Then, the following formula holds: 
    \begin{equation}\label{3.7}
        \lim_{t\to 0^{+}}\int_{S^{n-1}}E_{t}(u)\,du = \int_{S^{n-1}}\lim_{t\to 0^{+}} E_{t}(u)\,du,
    \end{equation}
where, for $u\in \sphere$, \begin{align}\label{etu}
E_{t}(u) = \int_{0}^{+\infty}\frac{\widehat{f_{t}}(ru)-f(ru)}{t}r^{n-1}\,dr.
\end{align}
\end{lemma}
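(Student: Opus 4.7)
The statement is an interchange of the $t\to 0^+$ limit with integration over $\sphere$, and the natural tool is the dominated convergence theorem.  The plan is first to construct a $t$-independent $L^{1}(\sphere)$ majorant for $|E_t(u)|$, second to verify pointwise existence of $\lim_{t\to 0^+}E_t(u)$ at $\mathcal{H}^{n-1}$-a.e. $u\in \sphere$, and finally to conclude by dominated convergence on $\sphere$.

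For the majorant, I would reuse the decomposition from Theorem \ref{prop5}: write $E_t(u)=P_t(u)+Q_t(u)$ according to the intermediate function $\widetilde{f_t}(x)=(1-\alpha[(1+\beta_1 t)\varphi^*]^*(x))^{1/\alpha}$, with $P_t$ carrying the difference $\widehat{f_t}-\widetilde{f_t}$ and $Q_t$ carrying $\widetilde{f_t}-f$.  The estimate \eqref{bound-left-1} gives $|(\widehat{f_t}-\widetilde{f_t})/t|\le|\beta_2|h_t$, where by the perspective identity $h_t(x)=(1/2-\alpha(1+\beta_1 t)\varphi(x/(1+\beta_1 t)))^{1/\alpha-1}$.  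The coercivity bound \eqref{coercive} together with $\varphi\ge 0$ yields $(1+\beta_1 t)\varphi(x/(1+\beta_1 t))\ge a|x|+c$ uniformly for $t\in(0,t_0)$, from which $h_t$ is dominated for $|x|$ large by a multiple of $(1+|x|)^{1/\alpha-1}$, which is integrable on $\Rn$ in the range $-\tfrac{1}{n}<\alpha<0$.  For $Q_t$, the substitution $r=(1+\beta_1 t)r'$ inside the radial integral transforms the $\widetilde{f_t}$-term and produces the split
$$Q_t(u)=\frac{(1+\beta_1 t)^n-1}{t}\int_0^\infty(1-\alpha(1+\beta_1 t)\varphi(ru))^{1/\alpha}r^{n-1}\,dr+\int_0^\infty\frac{(1-\alpha(1+\beta_1 t)\varphi(ru))^{1/\alpha}-f(ru)}{t}r^{n-1}\,dr.$$
The first summand is bounded by a constant multiple of $\int_0^\infty f(ru)r^{n-1}\,dr$, since $(1-\alpha(1+\beta_1 t)\varphi)^{1/\alpha}\le f$ (using $\varphi\ge 0$ and $\alpha<0$) and the prefactor is bounded on $(0,t_0)$; the second is bounded by $(-\beta_1/\alpha)\int_0^\infty f(ru)r^{n-1}\,dr$ via the mean-value estimate already used in Proposition \ref{VarFormulaWithItself}, combined with the elementary inequality $\varphi f^{1-\alpha}\le -f/\alpha$ of \eqref{minusalpha}.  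Both upper bounds lie in $L^1(\sphere)$ by Lemma \ref{Lem2.2}.

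The pointwise limit $\lim_{t\to 0^+}E_t(u)$ at $\mathcal{H}^{n-1}$-a.e. $u\in\sphere$ then follows by applying ordinary dominated convergence on the radial variable with the same majorants, using that $\varphi$ is differentiable Lebesgue-a.e. on $\mathrm{int}(\dom\varphi)$.  With pointwise convergence and a uniform $L^1(\sphere)$ majorant in hand, dominated convergence on $\sphere$ yields the claimed identity.  The main obstacle is the construction of uniform-in-$t$ integrable majorants: once the perspective identity $[(1+\beta_1 t)\varphi^*]^*(x)=(1+\beta_1 t)\varphi(x/(1+\beta_1 t))$ is in play, coercivity handles $P_t$ and the substitution $r=(1+\beta_1 t)r'$ reduces $Q_t$ to integrability properties of $f$ already provided by Lemma \ref{Lem2.2}.
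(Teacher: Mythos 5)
Your proposal is correct, but it takes a genuinely different route from the paper. The paper never constructs a majorant on the sphere for this lemma: it obtains the pointwise limit $\lim_{t\to 0^+}E_t(u)$ by running the argument of Theorem \ref{prop5} along each ray, integrates in polar coordinates and identifies $\int_{\sphere}\lim_{t\to 0^+}E_t(u)\,du$ with $\beta_1\JD(f,f)+\beta_2\int_{\Rn}f^{1-\alpha}\,dx$ via \eqref{3.1-no-beta}, and then notes that $\lim_{t\to 0^+}\int_{\sphere}E_t(u)\,du=\lim_{t\to 0^+}\int_{\Rn}\frac{\widehat{f_t}(x)-f(x)}{t}\,dx$ equals the same number by Theorem \ref{prop5}; the interchange \eqref{3.7} is thus a byproduct of computing both sides explicitly. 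You instead prove the interchange directly by dominated convergence on $\sphere$, which requires a uniform-in-$t$ majorant; your construction is sound: the bound $|\widehat{f_t}-\widetilde{f_t}|/t\le|\beta_2|h_t$ from \eqref{bound-left-1}, with $h_t$ dominated via coercivity by a radial profile comparable to $(1+|x|)^{\frac{1}{\alpha}-1}$ (integrable since $-\frac{1}{n}<\alpha$ forces $\alpha>-\frac{1}{n-1}$), and the substitution $r=(1+\beta_1t)r'$ reducing the $(\widetilde{f_t}-f)/t$ part to bounds of the form $C f(ru)r^{n-1}$ through \eqref{mean}, \eqref{minusalpha} and Lemma \ref{Lem2.2}. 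Your route is more self-contained (it does not lean on already knowing the value of the full-space limit) at the cost of the majorant bookkeeping; the paper's is shorter given Theorem \ref{prop5}.

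One caution on your pointwise-limit step: the limit $\lim_{t\to 0^+}E_t(u)$ must be computed through the same decomposition and change of variables (or with the boundary layer handled separately), not by naive dominated convergence in $r$ applied to the original integrand $\frac{\widehat{f_t}(ru)-f(ru)}{t}r^{n-1}$. For $u\in\Omega_f$ with $f(\rho_{K_f}(u)u)>0$, on the shell $\rho_{K_f}(u)<r<(1+\beta_1t)\rho_{K_f}(u)$ that integrand equals $\widehat{f_t}(ru)r^{n-1}/t$, is of size $1/t$, and admits no $t$-independent integrable dominating function; its contribution persists in the limit as the boundary term $\beta_1\rho_{K_f}(u)^{n}f(\rho_{K_f}(u)u)$ (compare \eqref{radialdiff} and \eqref{finite-u}). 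Your substituted form of $Q_t$ captures this automatically, and in particular the a.e.\ differentiability of $\varphi$ you invoke is not actually needed at this stage; what matters is precisely that the substitution keeps the boundary contribution inside the estimates.
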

\begin{proof}  By repeating the proof of Theorem \ref{prop5} verbatim, we get  
    \begin{align} \label{etulim}         
        \lim_{t\to 0^{+}}\!E_{t}(u)  &\!=\! \lim_{t\to 0^{+}}\int_{0}^{+\infty}\frac{\widehat{f_{t}}(ru)-f(ru)}{t}r^{n-1}\,dr\nonumber\\
          &\!=\! \beta_{1}\!\!  \int_{0}^{+\infty}\!\!\! \Big(nf(ru)\! -\!\varphi(ru)f(ru)^{1-\alpha}\Big)r^{n-1}dr\!   +\! \beta_{2}\!\! \int_{0}^{+\infty}\!\!\! f(ru)^{1-\alpha}r^{n-1}dr. 
        \end{align} Consequently, the  polar coordinates formula yields 
        \begin{align*}
         \int_{S^{n-1}}\lim_{t\to 0^{+}}E_{t}(u)\,du &= \beta_{1}\int_{S^{n-1}}\int_{0}^{+\infty}\!\!\! \Big(nf(ru)\! -\!\varphi(ru)f(ru)^{1-\alpha}\Big)r^{n-1}\,dr\,du\nonumber\\
         &\quad\ \ + \beta_{2}\int_{S^{n-1}}\!\int_{0}^{+\infty}\!\!\! f(ru)^{1-\alpha}r^{n-1}\,dr\,du\nonumber\\
        &=\beta_{1}\left(n\int_{\Rn}f(x)\,dx\! -\! \int_{\Rn}\varphi(x)f(x)^{1-\alpha}\,dx\right) + \beta_{2}\int_{\Rn}f(x)^{1-\alpha}\,dx \nonumber \\ 
     &=\beta_{1}\JD(f,f) + \beta_{2}\int_{\Rn}f(x)^{1-\alpha}\,dx, \label{3.81}
        \end{align*} where the last equality follows from \eqref{3.1-no-beta}.  
    Due to the polar coordinates formula and Theorem \ref{prop5}, we have
        \begin{align*}
            \lim_{t\to 0^{+}}\int_{S^{n-1}}E_{t}(u)\,du &=\lim_{t\to 0^{+}}\int_{S^{n-1}}\int_{0}^{+\infty}\frac{\widehat{f_{t}}(ru)-f(ru)}{t}r^{n-1}\,dr\,du\nonumber\\
            &=\lim_{t\to 0^{+}}\int_{\Rn}\frac{\widehat{f_{t}}(x)-f(x)}{t}\,dx\nonumber\\
            &=\beta_{1}\JD(f,f) + \beta_{2}\int_{\Rn}f(x)^{1-\alpha}\,dx\nonumber\\ &=\int_{S^{n-1}}\lim_{t\to 0^{+}}E_{t}(u)\,du. 
        \end{align*} This completes the proof of formula \eqref{3.7}. 
\end{proof}

Assume  $o\in \mathrm{int}(K_f)$. For $u\in \sphere$, let  \begin{align*}
    \rho_{K_f}(u) &= \sup\{t >0 : tu \in K_f\} \ \ \mathrm{and}\ \  \Omega_f =\{u\in S^{n-1}:\rho_{K_f}(u)<+\infty\}. 
\end{align*}

\begin{lemma}\label{lem7.1} Let $-\frac{1}{n}<\alpha < 0$, and $f=(1-\alpha\varphi)^{\frac{1}{\alpha}}\in \CA(\Rn) $ with $o\in \mathrm{int}(K_f)$. Assume that $\psi\in \conv(\Rn)$ and there exist constants $\beta_{1}> 0$ and $\beta_{2}\in\R$ such that
   \begin{equation}\label{3.11}
    -\infty<\inf\psi^*\leq\psi^*\leq
    \beta_1\varphi^*+\beta_2\quad\mathrm{on}\quad\Rn.
    \end{equation}  For $t>0$ small enough, let  \begin{align} f_{t}:=\big( 1-\alpha (\varphi^*+t\psi^*)^*\big)^{\frac{1}{\alpha}}. \label{define-f-1-1}  \end{align} 
 Recall that $\dpsi = \overline{\dom \varphi}$. Then, for almost every $u\in \Omega_f$, one has
    \begin{align*}
        \lim_{t\to0^+}\frac{\rho_{K_{f_t}}(u)-\rho_{K_f}(u)}t=\frac{h_{\dpsi}(\nu_{K_f}(\rho_{K_f}(u)u))}{\langle u,\nu_{K_f}(\rho_{K_f}(u)u)\rangle}.
    \end{align*}
\end{lemma}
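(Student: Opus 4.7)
The plan is to reduce this statement to the variational formula for the radial function of a Minkowski sum of (possibly unbounded) closed convex sets, namely \cite[Lemma 5.3]{FYZ24}, which extends the Huang--Lutwak--Yang--Zhang formula \cite{HLYZ16} to the unbounded setting. The crux is to identify the support $K_{f_t}$ of the perturbation $f_t$ with the Minkowski sum $K_f + t\dpsi$.

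First I would compute $(\varphi^*+t\psi^*)^*$ via infimal convolution. Since $\varphi, \psi \in \conv(\Rn)$ and $(t\psi^*)^*(z) = t\psi(z/t)$ by \eqref{property}, standard convex-analytic duality yields
\[
(\varphi^*+t\psi^*)^*(x) = \inf_{y + tw = x}\big\{\varphi(y) + t\psi(w)\big\},
\]
so that $\dom\big((\varphi^*+t\psi^*)^*\big) = \dom\varphi + t\dom\psi$. Taking closures and invoking elementary properties of Minkowski sums of closed convex sets, I would obtain
\[
K_{f_t} = \overline{\dom\varphi + t\dom\psi} = K_f + t\dpsi.
\]

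Second I would invoke \cite[Lemma 5.3]{FYZ24} with $K = K_f$ and $L = \dpsi$. Since $o \in \mathrm{int}(K_f)$ by assumption, and since we restrict to $u\in\Omega_f$ where $\rho_{K_f}(u) < +\infty$, the Gauss map $\nu_{K_f}$ is defined and singleton-valued at $\rho_{K_f}(u)u$ for $\mathcal{H}^{n-1}$-almost every such $u$. The cited formula then gives
\[
\lim_{t \to 0^+} \frac{\rho_{K_f + t\dpsi}(u) - \rho_{K_f}(u)}{t} = \frac{h_{\dpsi}(\nu_{K_f}(\rho_{K_f}(u)u))}{\langle u, \nu_{K_f}(\rho_{K_f}(u)u)\rangle},
\]
which combined with the identification $K_{f_t} = K_f + t\dpsi$ yields the desired conclusion.

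The main obstacle is the second equality $\overline{\dom\varphi + t\dom\psi} = K_f + t\dpsi$: verifying that closure commutes with the Minkowski sum and that $K_f + t\dpsi$ is itself closed, given that both sets may be unbounded. Here the growth condition \eqref{3.11} plays a decisive role: applying the Legendre transform to $\psi^* \leq \beta_1 \varphi^* + \beta_2$ and using \eqref{property} yields $\psi(x) \geq \beta_1 \varphi(x/\beta_1) - \beta_2$, forcing $\dom\psi \subseteq \beta_1 \dom\varphi$ and hence $\dpsi \subseteq \beta_1 K_f$. This containment, combined with the coercivity of $\varphi$ inherent to $f \in \CA(\Rn)$, constrains the recession cones of $K_f$ and $\dpsi$ sufficiently to ensure that their Minkowski sum is well-behaved. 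Moreover, by restricting to $u \in \Omega_f$, the rays emanating from $o$ in direction $u$ meet $\partial K_f$ (and $\partial K_{f_t}$ for small $t>0$) in finite time, sidestepping any global pathology caused by unboundedness and legitimizing the local differentiation of the radial function via \cite[Lemma 5.3]{FYZ24}.
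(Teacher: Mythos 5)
Your proposal is correct and follows essentially the same route as the paper, whose entire proof consists of invoking \cite[Lemma 5.3]{FYZ24} verbatim; your identification $K_{f_t}=\overline{\dom\varphi+t\,\dom\psi}=\overline{K_f+tD_\psi}$ via the infimal-convolution formula, together with $D_\psi\subset\beta_1K_f$ from \eqref{3.11}, is exactly the reduction the authors leave implicit. (The only nitpick is that $(\varphi^*+t\psi^*)^*$ equals the lower semi-continuous closure of the infimal convolution rather than the infimal convolution itself, but this changes neither the closure of the domain nor the radial functions, since $\rho_E=\rho_{\overline E}$ for convex $E$ with $o\in\mathrm{int}(E)$.)
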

\begin{proof}
    The proof follows from \cite[Lemma 5.3]{FYZ24} verbatim and is omitted.
\end{proof}
Indeed, condition \eqref{3.11} implies that $o\in \dom \psi$, due to fact that $\psi(o)=-\inf\psi^*<\infty.$  
Note that $E_{t}(u)$ in \eqref{etu} can be written as follows:
\begin{align} 
E_{t}(u)&= \int_{0}^{+\infty}\frac{\widehat{f_{t}}(ru)-f(ru)}{t}r^{n-1}\,dr   \ \ \mathrm{for} \ \ u\notin \Omega_f, \label{def-etu-notin} \\E_{t}(u)&= \int_{0}^{\rho_{K_{f}}(u)}\frac{\widehat{f_{t}}(ru)-f(ru)}{t}r^{n-1}\,dr \!+ \!\int_{\rho_{K_{f}}(u)}^{\rho_{K_{\widehat{f_{t}}}}(u)}\frac{\widehat{f_{t}}(ru)}{t}r^{n-1}\,dr \ \ \mathrm{for} \ \ u\in \Omega_f. \label{def-etu-in} 
\end{align} For convenience, we often only use the expression of \eqref{def-etu-in} for $E_t(\cdot)$; and when $u\notin \Omega_f$, it is understood as in \eqref{def-etu-notin}.

\begin{lemma}\label{lem7} 
Let $-\frac{1}{n}<\alpha < 0$, $\beta_{1} \geq  0$ and $\beta_{2}\in\R$  be given constants. Assume that  $f \in \CA(\Rn)$ with $o\in \mathrm{int}(K_f)$. For $t>0$ small enough, let   $\widehat{f_t}$ be given in \eqref{define-f-hat-1}.    Then, for almost every $u\in \sphere$, one has
   \begin{equation}\label{3.18}
       \lim_{t\to0^{+}}\int_{0}^{\rho_{K_{f}}(u)}\!\! \frac{\widehat{f_{t}}(ru)\!-\!f(ru)}{t}r^{n-1}\,dr=\!\!\int_{0}^{\rho_{K_{f}}(u)}\!\!\! \!\! \lim_{t\to 0^{+}}\frac{\widehat{f_{t}}(ru)\!-\! f(ru)}{t}r^{n-1}\,dr<+\infty.
    \end{equation} 
\end{lemma}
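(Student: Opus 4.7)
The plan is to apply the dominated convergence theorem on the (possibly bounded) integration interval $[0,\rho_{K_f}(u)]$ for each fixed $u$ belonging to a full-measure subset of $\sphere$. For $u \notin \Omega_f$ the interval is a half-line; the same argument works since the integrable majorant we construct decays sufficiently fast at infinity via $f^{1-\alpha}$.

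First I would establish pointwise convergence of the integrand. For a.e.\ $u$, Fubini ensures that $\varphi$ is differentiable at $ru$ for a.e.\ $r\in(0,\rho_{K_f}(u))$. At any such $x=ru$, Lemma \ref{Berman's formula} applied to $[(1+\beta_1 t)\varphi^*]^* = [\varphi^*+t\beta_1\varphi^*]^*$ together with the chain rule and $\Psi_\alpha'(s)=-(1-\alpha s)^{\frac{1}{\alpha}-1}$ yields
\begin{equation*}
\lim_{t\to 0^+}\frac{\widehat{f_{t}}(ru)-f(ru)}{t}=f(ru)^{1-\alpha}\bigl[\beta_{1}\varphi^{*}(\nabla\varphi(ru))+\beta_{2}\bigr],
\end{equation*}
and using \eqref{basicequ} this equals $f(ru)^{1-\alpha}[\beta_1(\langle ru,\nabla\varphi(ru)\rangle-\varphi(ru))+\beta_2]$.

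Next I would construct a $t$-uniform integrable majorant on $[0,\rho_{K_f}(u)]$ by decomposing
\begin{equation*}
\widehat{f_t}(x)-f(x)=\bigl[\widehat{f_t}(x)-\widetilde{f_t}(x)\bigr]+\bigl[\widetilde{f_t}(x)-f(x)\bigr],
\end{equation*}
where $\widetilde{f_t}=(1-\alpha[(1+\beta_1 t)\varphi^*]^*)^{1/\alpha}$. The first piece is bounded by $|\beta_2|$ times the quantity appearing on the right of \eqref{bound-left-1}; since the argument $-\alpha[(1+\beta_1t)\varphi^*]^*(x)+\alpha\beta_2 t$ is bounded below by $-1/2$ (cf.\ \eqref{control-beta-2}), this is uniformly bounded by a constant multiple of $(1-\alpha\varphi(x))^{\frac{1}{\alpha}-1}=f(x)^{1-\alpha}$ on $K_f$. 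For the second piece, the crucial point is that the perspective function $h(s):=(1+s)\varphi(x/(1+s))$ is convex in $s$ as the restriction of the jointly convex perspective of $\varphi$ to a line; hence the difference quotient $(h(\beta_1 t)-h(0))/t$ is monotone, squeezed between $\beta_1 h'(0^+)=-\beta_1\varphi^*(\nabla\varphi(x))$ and the explicit upper bound $\beta_1\varphi(0)$ obtained from the convexity of $\varphi$ applied to $x/(1+\beta_1 t)$. Combining this with the mean value inequality for the convex, decreasing function $\Psi_\alpha$ gives
\begin{equation*}
\bigl|\widetilde{f_t}(x)-f(x)\bigr|/t\ \leq\ C\,f(x)^{1-\alpha}\bigl[\beta_1\varphi(0)+\beta_1|\varphi^*(\nabla\varphi(x))|+|\beta_2|\bigr].
\end{equation*}

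Finally I would verify integrability of the majorant times $r^{n-1}$ on $[0,\rho_{K_f}(u)]$. Rewriting $\varphi^*(\nabla\varphi(x))=\langle x,\nabla\varphi(x)\rangle-\varphi(x)$ and using $|\nabla f|=f^{1-\alpha}|\nabla\varphi|$, the product $f^{1-\alpha}|\varphi^*(\nabla\varphi)|$ is dominated by terms $|x|\cdot f^{1-\alpha}|\nabla\varphi|$ and $\varphi\cdot f^{1-\alpha}$, each of which is integrable by Lemma \ref{Lem2.2} (using $-\tfrac{1}{n}<\alpha<0$). Dominated convergence then delivers the desired interchange, and the finiteness of the right-hand side is immediate from the same integrability estimates.

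The main obstacle is the potential blow-up of $\varphi(ru)$ and $\varphi^*(\nabla\varphi(ru))$ as $r\to\rho_{K_f}(u)^-$; the key balancing act is that the factor $f(ru)^{1-\alpha}=(1-\alpha\varphi(ru))^{\frac{1}{\alpha}-1}$ decays fast enough to absorb these blow-ups, and this absorption is precisely what the coercivity of $\varphi$ (encoded in \eqref{coercive}) and Lemma \ref{Lem2.2} are designed to exploit.
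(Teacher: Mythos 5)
Your overall strategy (a direct dominated convergence argument along each ray) is genuinely different from the paper's, but as written it has a real gap: the central domination inequality is false. In your bound for the second piece you apply the mean value theorem to $\Psi_{\alpha}$ at a point $\xi$ lying between $h(0)=\varphi(x)$ and $h(\beta_1 t)=(1+\beta_1 t)\varphi\big(x/(1+\beta_1 t)\big)$, and you then replace $|\Psi_{\alpha}'(\xi)|$ by $Cf(x)^{1-\alpha}$. Near $\partial K_f$ this substitution is not legitimate: $x/(1+\beta_1 t)$ sits well inside $K_f$, so $h(\beta_1 t)$ stays bounded while $\varphi(x)$ may blow up, and then $|\Psi_{\alpha}'(\xi)|$ exceeds $f(x)^{1-\alpha}$ by an unbounded factor. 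Concretely, take $n=1$, $\alpha=-\tfrac12$, $\beta_1=1$, $\beta_2=0$, and $\varphi(x)=\max\{\tfrac{1}{1-x}-1,\,-2x\}$ for $x<1$, $\varphi=+\infty$ for $x\ge 1$ (so $f\in\CA(\R)$, $o\in\mathrm{int}(K_f)$). At $x=1-\epsilon$ with $\epsilon\ll t$ one computes $\widetilde{f_t}(x)\approx 4t^{2}$ while $f(x)\approx 4\epsilon^{2}$, so $(\widetilde{f_t}(x)-f(x))/t\approx 4t$; hence $\sup_{0<t<t_1}(\widetilde{f_t}(x)-f(x))/t$ stays bounded below by roughly $4t_1>0$ as $\epsilon\to 0^{+}$, whereas your proposed majorant $Cf(x)^{1-\alpha}\big[\beta_1\varphi(0)+\beta_1|\varphi^{*}(\nabla\varphi(x))|+|\beta_2|\big]\approx C'(1-x)\to 0$. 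The same defect affects your first piece: from \eqref{control-beta-2} one only gets a constant bound (the base $1-\alpha[(1+\beta_1 t)\varphi^{*}]^{*}(x)+\alpha\beta_2 t$ is bounded below by $\tfrac12$, not below by $c\,(1-\alpha\varphi(x))$), which suffices on rays with $\rho_{K_f}(u)<+\infty$ but not on rays in $\Omega_f^{c}$, where you would have to compare with $f\big(x/(1+\beta_1 t)\big)^{1-\alpha}$ and invoke \eqref{coercive}--\eqref{radial}. So the interchange is not justified by the majorant you constructed, and it is not clear that any $t$-uniform integrable majorant can be produced without a separate, delicate analysis of a boundary layer near $\partial K_f$.

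For comparison, the paper sidesteps the majorant problem entirely: it already knows $\lim_{t\to 0^{+}}E_t(u)$ for almost every $u$ (the radial version of Theorem \ref{prop5}), it computes the limit of the extra piece $\int_{\rho_{K_f}(u)}^{\rho_{K_{\widehat{f_t}}}(u)}\widehat{f_t}(ru)r^{n-1}t^{-1}\,dr$ by the mean value theorem together with Lemma \ref{lem7.1}, obtaining the boundary term $\beta_1\rho_{K_f}(u)^{n}f(\rho_{K_f}(u)u)$, and it identifies this same term inside $\lim_{t}E_t(u)$ via the identity involving $\frac{d}{dr}\big(f(ru)r^{n}\big)$ and \eqref{basicequ}; subtracting gives exactly \eqref{3.18}, and the case $u\notin\Omega_f$ is handled by $f(ru)r^{n}\to 0$. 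Your computation of the pointwise limit agrees with the paper's \eqref{3.13}, but to complete your route you would need either a correct (merely locally integrable) majorant valid up to the boundary, or the paper's cancellation argument; as it stands the key domination step fails.
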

\begin{proof} Recall that $\widehat{f_{t}}$ is well defined if $0<t<t_0$ for some small fixed $t_{0}$. For $u\in \sphere$, $0<t<t_0$ and $r>0,$ let 
    $$\mathcal{E}_{t,u}(r):=\frac{\widehat{f_{t}}(ru)-f(ru)}{t}r^{n-1}.$$  
It can be checked from \eqref{continuity-star} that, for almost all $u\in \sphere$ and all $0<r<\rho_{K_f}(u)$, we have 
\begin{align}\label{widehat}
    \lim_{t\to0^{+}}\widehat{f_{t}}(ru)=f(ru).
    \end{align}
Note that $\widehat{f_{t}}$ in \eqref{define-f-hat-1} can be rewritten as follows:  \begin{align} \label{lemmadiff}
            \widehat{f_{t}}   = \big( 1-\alpha [(1+\beta_{1}t)\varphi^*]^*+ \alpha\beta_{2}t\big)^{\frac{1}{\alpha}} =\Big( 1-\alpha \big[\varphi^* +t (\beta_{1}\varphi^*+\beta_{2})\big]^* \Big)^{\frac{1}{\alpha}}.
        \end{align} 
When $\beta_1=0$, we have $\widehat{f_{t}} =\big( 1-\alpha \varphi +\alpha\beta_{2} t  \big)^{\frac{1}{\alpha}}$, and from \eqref{etulim} and \eqref{widehat}, we have
\begin{align*}
\int_{0}^{\rho_{K_{f}}(u)}\lim_{t\to0^{+}}\mathcal{E}_{t,u}(r)\,dr&=\int_{0}^{\rho_{K_{f}}(u)}\beta_{2}f(ru)^{1-\alpha}r^{n-1}\,dr\nonumber\\
&= \lim_{t\to 0^{+}} E_{t}(u)=\lim_{t\to0^{+}}\int_{0}^{\rho_{K_{f}}(u)}\mathcal{E}_{t,u}(r)\,dr.
\end{align*}

Next, assume that $\beta_1$ is positive. Applying Lemma \ref{Berman's formula} to $\varphi^*$ and $\beta_{1}\varphi^*+\beta_{2}$, it follows from  \eqref{widehat} and \eqref{lemmadiff} that
    \begin{align}\label{3.13}
            \lim_{t\to 0^+}\mathcal{E}_{t,u}(r)
            &=(\beta_{1}\varphi^*+\beta_{2})(\nabla \varphi(ru))(1-\alpha \varphi(ru))^{\frac{1}{\alpha}-1}r^{n-1}\nonumber\\
            &=\Big(\beta_{1}\varphi^*(\nabla \varphi(ru))+\beta_{2}\Big)f(ru)^{1-\alpha}r^{n-1}
        \end{align} holds for almost all $u\in \sphere$ and all $0<r<\rho_{K_f}(u).$ This further implies that   \begin{align}\label{omega-f}
    \int_{0}^{\rho_{K_{f}}(u)}\lim_{t\to0^{+}}\mathcal{E}_{t,u}(r)\,dr=\int_{0}^{\rho_{K_{f}}(u)}\Big(\beta_{1}\varphi^*(\nabla \varphi(ru))+\beta_{2}\Big)f(ru)^{1-\alpha}r^{n-1}\,dr.
   \end{align} 
    Let us now compute
$$\lim_{t\to0^{+}}\int_{0}^{\rho_{K_{f}}(u)}\mathcal{E}_{t,u}(r)\,dr.$$  It can be checked that, for almost all $u\in \sphere$ and all $0<r<\rho_{K_f}(u),$
    \begin{align*}
        \frac{d}{dr}\left( f(ru)r^{n}\right)&=\frac{d}{dr}\Big( (1-\alpha\varphi(ru))^{\frac{1}{\alpha}}r^{n}\Big) \nonumber  \\& =  \Big(nf(ru)\!-\!\left\langle ru,\nabla\varphi(ru)\right\rangle f(ru)^{1-\alpha}\Big)r^{n-1}. \end{align*} 
        Together with \eqref{basicequ}, for almost every $u\in \sphere$,  formula \eqref{etulim} can be written as 
    \begin{align}  \label{detulim}
    \lim_{t\to 0^{+}}\!\! E_{t}(u) 
     &\!=  \beta_{1}n\int_{0}^{\rho_{K_{f}}(u)}f(ru)r^{n-1}\,dr-\beta_{1}\int_{0}^{\rho_{K_{f}}(u)}\left\langle ru,\nabla\varphi(ru)\right\rangle f(ru)^{1-\alpha}r^{n-1}\,dr\nonumber\\
     &\quad +\int_{0}^{\rho_{K_{f}}(u)}\Big(\beta_{1}\varphi^*(\nabla \varphi(ru))+\beta_{2}\Big)f(ru)^{1-\alpha}r^{n-1}\,dr\nonumber\\
     &\!=  \beta_{1}\!\!\int_{0}^{\rho_{K_{f}}\!(u)}\!\!\frac{d}{dr}\left( f(ru)r^{n}\right)\!+\!\int_{0}^{\rho_{K_{f}}(u)}\!\!\Big(\beta_{1}\varphi^*(\nabla \varphi(ru))\!+\!\beta_{2}\Big)f(ru)^{1\!-\!\alpha}r^{n\!-\!1}\,dr.\end{align}
     
         First of all, let $u\in \Omega_f$. From \eqref{detulim}, we have\begin{align} \label{finite-u}     
        \lim_{t\to 0^{+}}\!E_{t}(u)  =  \beta_{1}\rho_{K_{f}}\!(u)^{n}\!f(\rho_{K_{f}}\!(u)u)
     \!+\!\!\int_{0}^{\rho_{K_{f}}\!(u)}\!\!\! \!\!\! \big(\beta_{1}\varphi^*(\nabla \varphi(ru))\!+\!\beta_{2}\big)f(ru)^{1\!-\!\alpha}r^{n\!-\!1}\,dr. \end{align}  
It is easy to check that ${\overline {\dom (\beta_1\varphi^*)^*}}= \beta_{1}{\overline {\dom \varphi}}=\beta_1K_f$.
    Applying the mean value theorem, \eqref{widehat} and Lemma \ref{lem7.1}, for almost every $u\in\Omega_f$, we have
    \begin{align}\label{radialdiff}
        \lim_{t\to 0^{+}}\int_{\rho_{K_{f}}(u)}^{\rho_{K_{\widehat{f}_{t}}}(u)}\frac{\widehat{f_{t}}(ru)}{t}r^{n-1}\,dr
        &= \lim_{t\to 0^{+}}\widehat{f_{t}}(r(t,u)u)r(t,u)^{n-1}\frac{h_{\beta_{1}K_{f}}(\nu_{K_{f}}(\rho_{K_{f}}(u)u))}{\left\langle u,\nu_{K_{f}}(\rho_{K_{f}}(u)u))\right\rangle}\nonumber\\
        &=\beta_{1}\rho_{K_{f}}(u)^{n}f(\rho_{K_{f}}(u)u),
    \end{align}
     where $r(t,u)\in[\rho_{K_f}(u),(1+t\beta_{1})\rho_{K_f}(u)]$ satisfies  $\lim_{t\to 0^{+}}r(t,u)=\rho_{K_f}(u).$   
    Hence, for almost every $u\in \Omega_{f}$, it follows from \eqref{def-etu-in} and \eqref{finite-u} that
    
         \begin{align} \label{3.15}
         \lim_{t\to0^{+}}\int_{0}^{\rho_{K_{f}}(u)}\mathcal{E}_{t,u}(r)\,dr &= \lim_{t\to 0^{+}}E_{t}(u)-\lim_{t\to 0^{+}}\int_{\rho_{K_{f}}(u)}^{\rho_{K_{\widehat{f}_{t}}}(u)}\frac{\widehat{f_{t}}(ru)}{t}r^{n-1}\,dr\nonumber\\
        &=\int_{0}^{\rho_{K_{f}}(u)}\Big(\beta_{1}\varphi^*(\nabla \varphi(ru))+\beta_{2}\Big)f(ru)^{1-\alpha}r^{n-1}\,dr.
    \end{align}

       Now let $u\notin\Omega_f$ which implies $\rho_{K_f}(u)=+\infty$. 
   From \eqref{radial}, we have, for $r>r_0$ ($r_0$ is a constant), $$f(ru)\leq \Big(\frac{-\alpha ar}{2}\Big)^{\frac{1}{\alpha}}.$$
    Due to  $-\frac{1}{n}<\alpha<0$, we have
    \begin{equation}\label{0}
        \lim_{r\to+\infty}f(ru)r^n \leq \lim_{r\to+\infty}\Big(\frac{-\alpha ar}{2}\Big)^{\frac{1}{\alpha}}r^n=0.
    \end{equation}
   It follows from \eqref{detulim} and (\ref{0}) that, for almost all $u\notin \Omega_{f}$, we have
 \begin{align*} 
    \lim_{t\to 0^{+}}E_{t}(u) &=\beta_{1}\int_{0}^{+\infty}\frac{d}{dr}\left( f(ru)r^{n}\right)+\int_{0}^{+\infty}\Big(\beta_{1}\varphi^*(\nabla \varphi(ru))+\beta_{2}\Big)f(ru)^{1-\alpha}r^{n-1}\,dr\nonumber\\
     &=\int_{0}^{+\infty}\Big(\beta_{1}\varphi^*(\nabla \varphi(ru))+\beta_{2}\Big)f(ru)^{1-\alpha}r^{n-1}\,dr.\end{align*}   Thanks to \eqref{def-etu-notin},  we obtain, for almost all $u\notin \Omega_{f}$,
   \begin{align}\label{3.19}
            \lim_{t\to 0^+} \int_{0}^{\rho_{K_{f}}(u)}\!\!\!\mathcal{E}_{t,u}(r)\,dr &=\! \lim_{t\to 0^+} \int_{0}^{+\infty}\mathcal{E}_{t,u}(r)\,dr=\lim_{t\to 0^+}E_t(u)\nonumber\\
            &= \int_{0}^{+\infty}\Big(\beta_{1}\varphi^*(\nabla \varphi(ru))+\beta_{2}\Big)f(ru)^{1-\alpha}r^{n-1}\,dr. 
        \end{align}   
        
        Combining \eqref{omega-f}, (\ref{3.15}), and (\ref{3.19}), we get, for $\beta_1>0$ and almost all $u\in \sphere$, 
    $$\lim_{t\to0^{+}}\int_{0}^{\rho_{K_{f}}(u)}\mathcal{E}_{t,u}(r)\,dr=\int_{0}^{\rho_{K_{f}}(u)}\lim_{t\to 0^{+}}\mathcal{E}_{t,u}(r)\,dr<+\infty.$$ This completes the proof of formula \eqref{3.18}. \end{proof}

We are now in the position to prove the following result. 
\begin{theorem}\label{thm8}
  Let $-\frac{1}{n}<\alpha < 0$, and $f=(1-\alpha\varphi)^{\frac{1}{\alpha}}\in \CA(\Rn) $ with $o\in \mathrm{int}(K_f)$. Assume that $\psi\in \conv(\Rn)$ and there exist constants $\beta_{1}> 0$ and $\beta_{2}\in\R$ satisfying \eqref{3.11}. Let $f_t$ be give in \eqref{define-f-1-1} for $t>0$ small enough. Then,
    \begin{equation}\label{main-formular}
        \lim_{t\to 0^{+}}\!\int_{\Rn}\!\! \frac{{{f}_{t}}(x)\!-\!f(x)}{t}\,dx=\! \int_{\Rn}\!\! \psi^*\!\left(\nabla \varphi(x)\right)\! f(x)^{1-\alpha}\,dx +   \int_{\partial {K_f}}\!\!\!  {h_{\dpsi}\!(\nu_{K_f}\!(x))}f(x)\,d\mathcal{H}^{n-1}(x).
    \end{equation}
\end{theorem}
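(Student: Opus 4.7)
The approach is to work in polar coordinates and sandwich the integrand between two functions of $\widehat{f_t}$-type, for which the interchange of limit and integral has already been handled in Lemmas \ref{lem6} and \ref{lem7}. Define $E_t(u)$ on $S^{n-1}$ by splitting the radial integral into an interior part on $[0,\rho_{K_f}(u)]$ and a boundary shell on $[\rho_{K_f}(u),\rho_{K_{f_t}}(u)]$, in complete analogy with \eqref{def-etu-notin}--\eqref{def-etu-in}. Then $\int_{\Rn}(f_t-f)/t\,dx = \int_{S^{n-1}} E_t(u)\,du$, and the proof reduces to (i) computing the pointwise limit $E(u):=\lim_{t\to 0^+}E_t(u)$, and (ii) justifying $\lim_t\int = \int\lim_t$.

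For (i), Berman's formula (Lemma \ref{Berman's formula}), applied with $\varphi^*$ in the role of $\varphi$ and $\psi^*$ in the role of $g$, gives $\frac{d}{dt}\big|_{t=0^+}(\varphi^*+t\psi^*)^*(x) = -\psi^*(\nabla\varphi(x))$ for a.e.\ $x\in\mathrm{int}(K_f)$. Its hypotheses follow from \eqref{3.11}: $\psi^*$ is bounded below, and $\psi^*(o),\varphi^*(o)<+\infty$ (the latter because $\varphi\geq 0$ forces $\varphi^*(o)\leq 0$, which then gives $\psi^*(o)\leq\beta_2$). A chain-rule computation using $(1-\alpha\varphi)^{1/\alpha-1}=f^{1-\alpha}$ yields the interior pointwise limit $\psi^*(\nabla\varphi(ru))f(ru)^{1-\alpha}$. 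For the boundary shell, Lemma \ref{lem7.1} supplies the first-order expansion of $\rho_{K_{f_t}}(u)-\rho_{K_f}(u)$, and a mean-value argument analogous to \eqref{radialdiff} produces on $\Omega_f$ the shell limit $f(\rho_{K_f}(u)u)\rho_{K_f}(u)^{n-1}h_{\dpsi}(\nu_{K_f}(\rho_{K_f}(u)u))/\langle u,\nu_{K_f}(\rho_{K_f}(u)u)\rangle$; on $S^{n-1}\setminus\Omega_f$ the shell contribution vanishes by the decay estimate \eqref{0}.

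For (ii), the main obstacle, condition \eqref{3.11} delivers the required sandwich. The upper bound $\psi^*\leq\beta_1\varphi^*+\beta_2$, combined with the order-reversing property of the Legendre transform and the sign $\alpha<0$, yields $f_t\leq\widehat{f_t}$ pointwise, with $\widehat{f_t}$ as in \eqref{define-f-hat-1}. Writing $c:=\inf\psi^*>-\infty$, the bound $\psi^*\geq c$ symmetrically gives $f_t\geq f_t^-:=(1-\alpha\varphi+\alpha tc)^{1/\alpha}$, which is precisely the $\widehat{f_t}$ corresponding to parameters $\beta_1=0$, $\beta_2=c$, so Lemmas \ref{lem6} and \ref{lem7} apply unchanged to both envelopes. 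Denoting the corresponding spherical integrands by $E_t^\pm$, we have $E_t^-\leq E_t\leq E_t^+$ on $S^{n-1}$, and both $\int_{S^{n-1}}E_t^\pm\,du$ converge to the integrals of their pointwise limits. The generalized dominated convergence theorem then transfers this convergence to $E_t$.

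Reassembling via polar coordinates, the interior part of $\int_{S^{n-1}}E(u)\,du$ becomes $\int_{\Rn}\psi^*(\nabla\varphi(x))f(x)^{1-\alpha}\,dx$, while the boundary part assembles into $\int_{\partial K_f}h_{\dpsi}(\nu_{K_f}(x))f(x)\,d\mathcal{H}^{n-1}(x)$ via the standard identity $d\mathcal{H}^{n-1}(\rho_{K_f}(u)u)=\rho_{K_f}(u)^{n-1}\langle u,\nu_{K_f}\rangle^{-1}\,du$, which completes the proof of \eqref{main-formular}. The delicate step throughout is the sandwich: one must carefully track how the Legendre transform and the negative exponent $1/\alpha$ reverse inequalities, and verify that $f_t^-$ genuinely fits the framework of Theorem \ref{prop5}. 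All remaining work is routine bookkeeping.
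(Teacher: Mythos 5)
Your proposal is correct and follows essentially the same route as the paper's proof: the same sandwich $(1-\alpha\varphi+\alpha t\inf\psi^*)^{1/\alpha}\le f_t\le\widehat{f_t}$ coming from \eqref{3.11}, the same use of Lemmas \ref{lem6} and \ref{lem7} (valid for $\beta_1\ge0$, so covering both envelopes) with the generalized dominated convergence theorem, Lemma \ref{Berman's formula} for the interior pointwise limit, and Lemma \ref{lem7.1} with a mean-value argument for the boundary shell, reassembled in polar coordinates. The only cosmetic difference is your justification of the vanishing shell term for $u\notin\Omega_f$: the paper simply notes that $\rho_{K_f}(u)=+\infty$ makes that radial interval empty, rather than invoking the decay estimate \eqref{0}, which is what is actually needed (as in Lemma \ref{lem7}) for the interior integral over $(0,+\infty)$.
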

   
\begin{proof} For $t>0$ small enough, we have \begin{align*}
         \lim_{t\to0^{+}}\int_{\Rn}\frac{f_{t}(x)-f(x)}{t}\,dx
         &= \lim_{t\to0^{+}}\int_{S^{n-1}}\!\int_{0}^{+\infty}\frac{f_{t}(ru)-f(ru)}{t}r^{n-1}\,dr\,du=\lim_{t\to0^{+}} \int_{S^{n-1}}F_{t}(u)\,du,
    \end{align*}  where, for $u\in \sphere$,   $$F_{t}(u):= \int_{0}^{+\infty}\frac{f_{t}(ru)-f(ru)}{t}r^{n-1}\,dr.$$ 
 
   For $u\in \sphere$, let  \begin{align*}
       G_t(u): =  \int_{0}^{+\infty}\frac{\big(1-\alpha \varphi+\alpha t \inf\psi^*\big)^{\frac{1}{\alpha}}(ru)-f(ru)}{t}r^{n-1}\,dr,
\end{align*} where $\inf\psi^* >-\infty$ due to \eqref{3.11}. 
It follows from \eqref{3.11} that $$\varphi^*+t\inf\psi^*\leq \varphi^*+t\psi^*\leq(1+t\beta_1)\varphi^*+t\beta_2.$$ This implies that, for $t>0$ small enough,\begin{align} \label{compare-3-functions}
\big(1-\alpha \varphi+\alpha t \inf\psi^*\big)^{\frac{1}{\alpha}}\leq f_t\leq \widehat{f_{t}}.\end{align} Together with \eqref{widehat}, we get 
     \begin{equation}\label{double1continuity}
     \lim_{t\to0^{+}}{f_{t}}(x)=f(x).
     \end{equation} For $t>0$ small enough, the support of  $\big(1-\alpha \varphi+\alpha t \inf\psi^*\big)^{\frac{1}{\alpha}}$ is equal to $K_f$.  Moreover,  
\begin{align}
 \label{compare-F-G-E}
    G_t\leq F_t\leq E_t\ \ \mathrm{and} \ \ K_f\subset K_{f_t}\subset K_{\widehat{f_{t}}}.
\end{align}   It follows from  Lemma \ref{lem6} and the generalized dominated convergence theorem that
    $$\lim_{t\to 0^{+}}\int_{S^{n-1}}F_{t}(u)\,du = \int_{S^{n-1}}\lim_{t\to 0^{+}}F_{t}(u)\,du.$$
We can rewrite $F_t(\cdot)$ as follows: for $u\in \sphere,$
    \begin{align} F_{t}(u)= \int_{0}^{\rho_{K_{f}}(u)}\frac{f_{t}(ru)-f(ru)}{t}r^{n-1}\,dr + \int_{\rho_{K_{f}}(u)}^{\rho_{K_{f_{t}}}(u)}\frac{f_{t}(ru)}{t}r^{n-1}\,dr,\label{rewrite-F-t}\end{align} where the second integral is treated as $0$ if $u\notin \Omega_f$ (i.e., $\rho_{K_f}(u)=+\infty$)  due to \eqref{compare-F-G-E}. That is, if $u\notin \Omega_{f}$,
     \begin{equation}\label{bounddiff1}
    \int_{\rho_{K_{f}}(u)}^{\rho_{K_{f_{t}}}(u)}\frac{f_{t}(ru)}{t}r^{n-1}\,dr=0.
    \end{equation} Similar to \eqref{radialdiff},  it can be checked from \eqref{double1continuity} that, for almost every $u\in\Omega_f$,   
    \begin{align}\label{bounddiff}
        \lim_{t\to 0^{+}}\int_{\rho_{K_{f}}(u)}^{\rho_{K_{f_{t}}}(u)}\frac{f_{t}(ru)}{t}r^{n-1}\,dr&= \lim_{t\to 0^{+}} f_{t}(r(t,u)u)r(t,u)^{n-1}\lim_{t\to 0^{+}}\frac{\rho_{K_{f_{t}}}(u)-\rho_{K_{f}}(u)}{t}\nonumber\\
        &= f(\rho_{K_{f}}(u)u)\rho_{K_{f}}(u)^{n-1}\frac{h_{\dpsi}(\nu_{K_{f}}(\rho_{K_{f}}(u)u))}{\left\langle u,\nu_{K_{f}}(\rho_{K_{f}}(u)u))\right\rangle},
    \end{align}
    where $r(t,u)\in[\rho_{K_f}(u),\rho_{K_{f_{t}}}(u)]$ such that $ \lim_{t\to 0^{+}}r(t,u)=\rho_{K_f}(u)$.
    
    Next, we will compute, for almost all  $u\in \sphere$:  
    $$\lim_{t\to 0^{+}}\int_{0}^{\rho_{K_{f}}(u)}\frac{f_{t}(ru)-f(ru)}{t}r^{n-1}\,dr.$$ 
    For $t>0$ small enough and $u\in \sphere$, let 
   \begin{align*} \mathcal{F}_{t,u}(r)&=\frac{f_{t}(ru)-f(ru)}{t}r^{n-1}, \\ \mathcal{G}_{t,u}(r)&=\frac{\big(1-\alpha \varphi+\alpha t \inf\psi^*\big)^{\frac{1}{\alpha}}(ru)-f(ru)}{t}r^{n-1}.\end{align*} 
    It follows from \eqref{compare-3-functions} that  
    $\mathcal{G}_{t,u}(r)\leq \mathcal{F}_{t,u}(r)\leq \mathcal{E}_{t,u}(r)$. Together with Lemma \ref{lem7} and the generalized dominated convergence theorem, we have  
     $$\lim_{t\to0^{+}}\int_{0}^{\rho_{K_{f}}(u)}\mathcal{F}_{t,u}(r)\,dr=\int_{0}^{\rho_{K_{f}}(u)}\lim_{t\to 0^{+}}\mathcal{F}_{t,u}(r)\,dr.$$
   Similar to the proof for \eqref{3.13}, by \eqref{double1continuity} and Lemma \ref{Berman's formula} (applying to $\varphi^*$ and $\psi^*$), we have
    \begin{align*}
    \lim_{t\to 0^+}\mathcal{F}_{t,u}(r) =\lim_{t\to 0^+}\frac{f_t(ru)-f(ru)}{t}r^{n-1}  =\psi^*(\nabla \varphi(ru))f(ru)^{1-\alpha}r^{n-1}, 
    \end{align*}  holds for almost all $u\in \sphere$ such that $ru\in \dom\varphi$. Therefore,
    \begin{equation}\label{ft}
    \lim_{t\to0^{+}}\int_{0}^{\rho_{K_{f}}(u)}\mathcal{F}_{t,u}(r)\,dr=\int_{0}^{\rho_{K_{f}}(u)}\psi^*(\nabla \varphi(ru))f(ru)^{1-\alpha}r^{n-1}\,dr.
    \end{equation} 
    
    Combining \eqref{rewrite-F-t}, \eqref{bounddiff1},  \eqref{bounddiff}, and \eqref{ft}, we have  
 \begin{align*}
    \lim_{t\to 0^{+}}\int_{S^{n-1}}F_{t}(u)\,du &= \int_{S^{n-1}}\lim_{t\to 0^{+}}F_{t}(u)\,du\\
    &=\int_{S^{n-1}}\int_{0}^{\rho_{K_{f}}(u)}\psi^*\left(\nabla \varphi(ru)\right) f(ru)^{1-\alpha}r^{n-1}\,dr\,du\\
     &\qquad+ \int_{\Omega_f}f(\rho_{K_{f}}(u)u)\rho_{K_{f}}(u)^{n-1}\frac{h_{\dpsi}(\nu_{K_f}(\rho_{K_f}(u)u))}{\langle u,\nu_{K_f}(\rho_{K_f}(u)u)\rangle}\,du\\
    &=\int_{\Rn}\psi^*\left(\nabla \varphi(x)\right) f(x)^{1-\alpha}\,dx + \int_{\partial {K_f}}{h_{\dpsi}(\nu_{K_f}(x))}f(x)\,d\mathcal{H}^{n-1}(x).
\end{align*} This completes the proof of \eqref{main-formular}.  \end{proof}

Our next result is to relax the conditions $-\infty<\inf \psi^*<\psi^*$ in  \eqref{3.11}  (implying $o\in \dom\psi$) and $o\in \mathrm{int}(K_f)$.
Note that the last inequality in \eqref{3.11}  implies $$\overline{\dom \psi} \subset \beta_1 \overline{\dom\varphi}=\beta_1 K_f.$$ The idea is to translate $\dom\psi$ to $(\dom\psi)-x_0$ for some $x_0\in (\dom\psi)\cap\mathrm{int}(\beta_1K_f)$. Let  \begin{align}\label{def-psi-tilde-varphi} 
    \widetilde{\psi}(x)=\psi(x+x_0) \ \ \mathrm{and} \ \ \widetilde{\varphi}(x)=\varphi\Big(x+\frac{x_0}{\beta_1}\Big).
\end{align} Then, $\dom\widetilde{\psi}=\dom\psi-x_0$ and hence $o\in \dom\widetilde{\psi}.$ It can be checked that $\widetilde{\psi}^*\geq -\widetilde{\psi}(o)=-\psi(x_0)$ and hence \begin{align}\label{tildeinf}
-\infty<\inf\widetilde\psi^*\leq\widetilde\psi^*.
\end{align}
On the other hand, $x_0\in \mathrm{int}(\beta_1 K_f )$ and then $o\in \beta_1 \mathrm{int}(K_f-\frac{x_0}{\beta_1})$. That is, $$o\in \mathrm{int}\Big(K_f-\frac{x_0}{\beta_1}\Big)=\mathrm{int}(\dom\widetilde{\varphi}).$$ Moreover, if $\psi$ satisfies $\psi^*\leq \beta_1\varphi^*+\beta_2$, then, for any $y\in \Rn$, \begin{align*}
    \widetilde{\psi}^*(y)=\psi^*(y)-\langle x_0, y\rangle \nonumber \leq \beta_1\varphi^*(y)+\beta_2-\langle x_0, y\rangle \nonumber=\beta_1 \Big(\varphi^*(y)-\big\langle \frac{x_0}{\beta_1}, y\big\rangle\Big)+\beta_2\nonumber=\beta_1  \widetilde{\varphi}^*(y) +\beta_2.
\end{align*} Together with \eqref{tildeinf}, we see that $\widetilde{\varphi}$ and $\widetilde{\psi}$ satisfy the condition \eqref{3.11}, namely, 
\begin{align*}-\infty<\inf\widetilde\psi^*\leq\widetilde\psi^*\leq\beta_1  \widetilde{\varphi}^* +\beta_2. 
\end{align*} This  allows us to use the result in Theorem \ref{thm8}. 

 We shall need the following lemma.  
\begin{lemma}\label{lem9}
    Let $-\frac{1}{n}<\alpha <0$ and  $f = (1-\alpha\varphi)^{\frac{1}{\alpha}}\in \CA(\Rn)$ with $\mathrm{int}(K_f)\neq\emptyset$.
    Then, for any $u\in \sphere,$
    \begin{equation}\label{3.12}
         \int_{\Rn}\langle u,\nabla \varphi(x)\rangle f(x)^{1-\alpha}\,dx + \int_{\partial {K_f}}\langle u,\nu_{K_f}(x)\rangle f(x)\,d\mathcal{H}^{n-1}(x)=0.
    \end{equation} 
\end{lemma}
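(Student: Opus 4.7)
The plan is to apply Theorem \ref{thm8} to two carefully chosen test functions indexed by $\pm u$, and then to cancel the symmetric contributions using translation invariance of Lebesgue measure. First, since the lemma only assumes $\mathrm{int}(K_f)\neq\emptyset$, I reduce to the case $o\in\mathrm{int}(K_f)$: pick $x_0\in\mathrm{int}(K_f)$ and set $\tilde{\varphi}(x)=\varphi(x+x_0)$, $\tilde f(x)=f(x+x_0)$. Then $\tilde f\in\CA(\Rn)$ with $o\in\mathrm{int}(K_{\tilde f})$, and the change of variables $y=x+x_0$ turns \eqref{3.12} for $(\tilde f,u)$ exactly into \eqref{3.12} for $(f,u)$. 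I assume henceforth that $o\in\mathrm{int}(K_f)$.

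For each $u\in\sphere$, take $\psi_u$ to be the convex indicator of the closed unit ball $\bar B(u,1)$, so that $\psi_u\in\conv(\Rn)$ with $\psi_u^*(y)=h_{\bar B(u,1)}(y)=\langle u,y\rangle+|y|$ and $\inf\psi_u^*=0$. Because $o\in\mathrm{int}(\dom\varphi)$, there exist $r,M>0$ with $\varphi\leq M$ on $r\ball$, giving the linear lower bound $\varphi^*(y)\geq r|y|-M$ on all of $\Rn$. Choosing $\beta_1=2/r$ and $\beta_2=2M/r+1$ then yields $\psi_u^*(y)\leq 2|y|\leq\beta_1\varphi^*(y)+\beta_2$, so condition \eqref{3.11} holds. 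Therefore Theorem \ref{thm8} applies to both pairs $(f,g_u)$ and $(f,g_{-u})$, where $g_{\pm u}=(1-\alpha\psi_{\pm u})^{1/\alpha}$. Using $h_{\bar B(\pm u,1)}(\nu)=\pm\langle u,\nu\rangle+1$ for $\nu\in\sphere$, Theorem \ref{thm8} yields
\begin{equation*}
\JD(f,g_{\pm u})=\int_{\Rn}\bigl(\pm\langle u,\nabla\varphi\rangle+|\nabla\varphi|\bigr)f^{1-\alpha}\,dx+\int_{\partial K_f}\bigl(\pm\langle u,\nu_{K_f}\rangle+1\bigr)f\,d\mathcal{H}^{n-1},
\end{equation*}
and subtracting the two identities produces exactly twice the left-hand side of \eqref{3.12}.

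To finish, it remains to verify that $\JD(f,g_u)=\JD(f,g_{-u})$. Using the standard duality between addition and infimal convolution (valid because $t\psi_u^*=t\langle u,\cdot\rangle+t|\cdot|$ and $t|\cdot|$ is continuous), one computes
\begin{equation*}
(\varphi^*+t\psi_u^*)^*(x)=\inf_{|w|\leq t}\varphi(x-tu-w),
\end{equation*}
so that $f\oplus_\alpha t\bigcdot_\alpha g_u(x)=\sup_{|w|\leq t}f(x-tu-w)$, and the change of variable $x\mapsto x+tu$ then gives $\Ja(f\oplus_\alpha t\bigcdot_\alpha g_u)=\int_{\Rn}\sup_{|w|\leq t}f(x-w)\,dx$, which is manifestly independent of $u$. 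Hence $\JD(f,g_u)=\JD(f,g_{-u})$, so the left-hand side of the subtracted identity vanishes and \eqref{3.12} follows. The main subtlety is the domination $\psi_u^*\leq\beta_1\varphi^*+\beta_2$ with constants $\beta_1,\beta_2$ independent of $u\in\sphere$, which relies crucially on the linear lower bound for $\varphi^*$ afforded by $o\in\mathrm{int}(\dom\varphi)$; the infimal-convolution identity used in the final step also warrants a brief justification via the standard duality for proper lower semi-continuous convex functions.
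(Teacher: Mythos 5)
Your proof is correct, but it takes a genuinely different route from the paper's. The paper applies Theorem \ref{thm8} to the translated function $\overline f(x)=f(x+x_0)$ with the perturbation $\psi=\overline\varphi$ itself (condition \eqref{3.11} is then trivially satisfied with $\beta_1=1$, $\beta_2=0$), and exploits that $\JD(\overline f,\overline f)=\JD(f,f)$, which follows from the explicit formula \eqref{3.1-no-beta} of Proposition \ref{VarFormulaWithItself} and translation invariance of Lebesgue measure; since $\overline\varphi^*=\varphi^*-\langle x_0,\cdot\rangle$, comparing the two evaluations shows that the linear expression $x_0\mapsto\int_{\Rn}\langle x_0,\nabla\varphi\rangle f^{1-\alpha}\,dx+\int_{\partial K_f}\langle x_0,\nu_{K_f}\rangle f\,d\mathcal H^{n-1}$ is constant on $\mathrm{int}(K_f)$, and replacing $x_0$ by $x_0+\epsilon u$ yields \eqref{3.12}. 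You instead keep $f$ fixed (after the same reduction to $o\in\mathrm{int}(K_f)$, which is legitimate as you check) and perturb by the ball indicators $\psi_{\pm u}$, verifying \eqref{3.11} with $u$-independent constants via the bound $\varphi^*(y)\geq r|y|-M$, and you obtain $\JD(f,g_u)=\JD(f,g_{-u})$ from the sup-convolution representation $f\oplus_\alpha t\bigcdot_\alpha g_u(x)=\sup_{|w|\leq t}f(x-tu-w)$ together with translation invariance of the total mass; subtracting the two instances of Theorem \ref{thm8} then gives \eqref{3.12}. In effect you translate the perturbing function $g$, whereas the paper translates $f$ and uses the translation invariance of the self-variation; your argument is the functional analogue of the classical derivation of $\int_{\sphere}\langle u,\nu\rangle\,dS_K(\nu)=0$ by translating the second body in the Minkowski sum, and it avoids any appeal to Proposition \ref{VarFormulaWithItself}. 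The price is two routine verifications you correctly flag or should make explicit: the identity $(\varphi^*+t\psi_u^*)^*=\varphi\,\square\,\chi_{\bar B(tu,t)}$ (valid here because the indicator has compact domain and $\varphi\geq0$ is lower semi-continuous, so the infimal convolution is convex and lower semi-continuous, hence equal to its biconjugate), and the finiteness of each of the four integrals entering the subtraction, which is immediate since every integrand is nonnegative and their sums equal the finite quantities $\JD(f,g_{\pm u})$ produced by Theorem \ref{thm8}.
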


\begin{proof}
Choose $x_0\in \mathrm{int}(K_f)$. Let $\overline{\varphi}(x)=\varphi(x+x_0)$  and  $$\overline {f}(x)=(1-\alpha\overline\varphi(x))^{\frac{1}{\alpha}}=(1-\alpha\varphi(x+ x_0))^{\frac{1}{\alpha}}.$$  Then, $o\in \mathrm{int}(K_{\overline f})$ and $\overline\varphi^*(x)=\varphi^*(x)-\langle x_0,x\rangle$. From \eqref{3.1-no-beta}, we have
 \begin{align*}
 \JD(\overline f,\overline f)
 =n\Ja(\overline f) - \!\! \int_{\Rn}\overline\varphi(x)\overline f(x)^{1-\alpha}\,dx =n\Ja(f) -\!\!  \int_{\Rn}\varphi(x)f(x)^{1-\alpha}\,dx =\JD( f, f).
  \end{align*}
Applying Theorem \ref{thm8} to $\overline f$, by $K_{\overline f}= K_{ f}-x_0$ and $z=x+x_0$, it follows that 
\begin{align*}
 \JD(\overline f,\overline f)
 &=\int_{\Rn}\overline\varphi^*\left(\nabla \overline\varphi(x)\right) \overline f(x)^{1-\alpha}\,dx +\! \int_{\partial {K_{\overline f}}}{h_{K_{\overline f}}(\nu_{K_{\overline f}}(x)})\overline f(x)\,d\mathcal{H}^{n-1}(x)\\&=\int_{\Rn}\overline\varphi^*\left(\nabla  \varphi(x+x_0)\right)  f(x+x_0)^{1-\alpha}\,dx \\ & \quad +\! \int_{\partial {(K_{f}-x_0)}}{h_{(K_{ f}-x_0)}(\nu_{(K_{ f}-x_0)}(x)})  f(x+x_0)\,d\mathcal{H}^{n-1}(x)\\&=\int_{\Rn}\overline\varphi^*\left(\nabla  \varphi(z)\right)  f(z)^{1-\alpha}\,dz +\! \int_{\partial {K_{f}}}{h_{(K_{ f}-x_0)}(\nu_{K_{ f}}(z)})  f(z)\,d\mathcal{H}^{n-1}(z)\\
 &=\int_{\Rn}\varphi^*\left(\nabla \varphi(z)\right)  f(z)^{1-\alpha}\,dz + \int_{\partial {K_{ f}}}{h_{K_f}(\nu_{K_{ f}}(z)})f(z)\,d\mathcal{H}^{n-1}(z)\\ 
 &\quad -\int_{\Rn}\langle x_0,\nabla \varphi(z)\rangle f(z)^{1-\alpha}\,dz - \int_{\partial {K_f}}\langle x_0,\nu_{K_f}(z)\rangle f(z)\,d\mathcal{H}^{n-1}(z)\\
 &=\JD( f, f),  \end{align*} which is independent of $x_{0}$. This further implies that,  for any $x_0\in\mathrm{int}(K_f)$,\begin{align}\label{u}\int_{\Rn}\langle x_0,\nabla \varphi(z)\rangle f(z)^{1-\alpha}\,dz + \int_{\partial {K_f}}\langle x_0,\nu_{K_f}(z)\rangle f(z)\,d\mathcal{H}^{n-1}(z)=c, \end{align} where $c$ is a constant independent of $x_0$. Because $x_0\in \mathrm{int}(K_f)$, there exits $\epsilon>0$ such that $x_0+\epsilon B_2^n\subset \mathrm{int}(K_f)$. That is, for $u\in\sphere$, $x_0+\epsilon u\in\mathrm{int}(K_f)$ and thus, 
 \begin{align*}\int_{\Rn}\langle x_0+\epsilon u,\nabla \varphi(z)\rangle f(z)^{1-\alpha}\,dz + \int_{\partial {K_f}}\langle x_0+\epsilon u,\nu_{K_f}(z)\rangle f(z)\,d\mathcal{H}^{n-1}(z)=c. \end{align*}
 Together with \eqref{u}, we get, for any $u\in\sphere$,
\begin{align*}
 \int_{\Rn}\langle u,\nabla \varphi(z)\rangle f(z)^{1-\alpha}\,dz + \int_{\partial {K_f}}\langle u,\nu_{K_f}(z)\rangle f(z)\,d\mathcal{H}^{n-1}(z)=0.
 \end{align*} 
This completes the proof of \eqref{3.12}.
\end{proof}   
Our main result in this section is summarized in the following theorem. 
\begin{theorem}\label{thm10}
     Let $-\frac{1}{n}<\alpha < 0$, and $f=(1-\alpha\varphi)^{\frac{1}{\alpha}}\in \CA(\Rn) $. Assume that $\psi\in \conv(\Rn)$, and there exist constants $\beta_{1}> 0$ and $\beta_{2}\in\R$  satisfying
$$   (\dom \psi) \cap\mathrm{int}(\beta_1K_f)\neq\emptyset  \ \ \mathrm{and}\ \ \psi^*\leq\beta_1\varphi^*+\beta_2.$$ Let  $f_t$ be  given in \eqref{define-f-1-1} for $t>0$ small enough.
    Then,
\begin{equation}\label{3.12-1}
        \lim_{t\to 0^{+}}\!\int_{\Rn}\!\! \frac{{{f}_{t}}(x)\!-\!f(x)}{t}\,dx= \int_{\Rn}\! \psi^*\!\left(\nabla \varphi(x)\right) f(x)^{1-\alpha}\,dx + \! \int_{\partial {K_f}}\!  {h_{\dpsi}\!(\nu_{K_f}\!(x))}f(x)\,d\mathcal{H}^{n-1}(x).
    \end{equation}
\end{theorem}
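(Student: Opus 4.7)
The plan is to reduce Theorem \ref{thm10} to Theorem \ref{thm8} by means of a translation, exactly as outlined in the paragraph preceding Lemma \ref{lem9}. Since the hypothesis gives us $(\dom\psi)\cap\mathrm{int}(\beta_{1}K_{f})\neq\emptyset$, we fix $x_{0}$ in this intersection and define $\widetilde\varphi(x)=\varphi(x+x_{0}/\beta_{1})$, $\widetilde\psi(x)=\psi(x+x_{0})$ as in \eqref{def-psi-tilde-varphi}, together with $\widetilde f:=(1-\alpha\widetilde\varphi)^{1/\alpha}$. The computation in the paragraph preceding Lemma \ref{lem9} shows that $\widetilde\varphi,\widetilde\psi$ satisfy the stronger condition $-\infty<\inf\widetilde\psi^{*}\leq\widetilde\psi^{*}\leq\beta_{1}\widetilde\varphi^{*}+\beta_{2}$ needed for Theorem \ref{thm8}; the nonnegativity and coercivity of $\widetilde\varphi$ are inherited from $\varphi$ since these properties are preserved by translation, and $o\in\mathrm{int}(K_{\widetilde f})$ because $x_{0}/\beta_{1}\in\mathrm{int}(K_{f})$. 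Finally, the assumption $(\dom\psi)\cap\mathrm{int}(\beta_{1}K_{f})\neq\emptyset$ forces $\mathrm{int}(K_{f})\neq\emptyset$, which will be needed for Lemma \ref{lem9}.

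Next, I identify $\widetilde f_{t}:=(1-\alpha(\widetilde\varphi^{*}+t\widetilde\psi^{*})^{*})^{1/\alpha}$ with a shift of $f_{t}$. Using the elementary Legendre identity $(\phi-\langle c,\cdot\rangle)^{*}(x)=\phi^{*}(x+c)$, one computes
\begin{equation*}
\widetilde\varphi^{*}+t\widetilde\psi^{*}=\varphi^{*}+t\psi^{*}-\langle(1/\beta_{1}+t)x_{0},\cdot\rangle,
\end{equation*}
so that $\widetilde f_{t}(x)=f_{t}(x+(1/\beta_{1}+t)x_{0})$ and $\widetilde f(x)=f(x+x_{0}/\beta_{1})$. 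Two applications of the translation invariance of Lebesgue measure then give
\begin{equation*}
\int_{\Rn}\frac{\widetilde f_{t}(x)-\widetilde f(x)}{t}\,dx=\int_{\Rn}\frac{f_{t}(y+tx_{0})-f(y)}{t}\,dy=\int_{\Rn}\frac{f_{t}(y)-f(y)}{t}\,dy,
\end{equation*}
so the left-hand side of \eqref{3.12-1} equals the left-hand side of Theorem \ref{thm8} applied to $\widetilde f$.

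For the right-hand side, I apply Theorem \ref{thm8} to the pair $(\widetilde f,\widetilde g)$ and transform each term back via $z=x+x_{0}/\beta_{1}$. Using $\nabla\widetilde\varphi(x)=\nabla\varphi(z)$ and $\widetilde\psi^{*}(y)=\psi^{*}(y)-\langle x_{0},y\rangle$, the bulk integral becomes
\begin{equation*}
\int_{\Rn}\psi^{*}(\nabla\varphi(z))f(z)^{1-\alpha}\,dz-\int_{\Rn}\langle x_{0},\nabla\varphi(z)\rangle f(z)^{1-\alpha}\,dz.
\end{equation*}
Similarly, using $K_{\widetilde f}=K_{f}-x_{0}/\beta_{1}$, $\nu_{K_{\widetilde f}}(x)=\nu_{K_{f}}(z)$, $\dwpsi=\dpsi-x_{0}$, and $h_{\dwpsi}(u)=h_{\dpsi}(u)-\langle x_{0},u\rangle$, the boundary integral becomes
\begin{equation*}
\int_{\partial K_{f}}h_{\dpsi}(\nu_{K_{f}}(z))f(z)\,d\mathcal{H}^{n-1}(z)-\int_{\partial K_{f}}\langle x_{0},\nu_{K_{f}}(z)\rangle f(z)\,d\mathcal{H}^{n-1}(z).
\end{equation*}
The two correction terms involving $x_{0}$ combine into precisely the expression Lemma \ref{lem9} asserts vanishes, once we extend \eqref{3.12} from $u\in\sphere$ to $x_{0}\in\Rn$ by linearity (writing $x_{0}=|x_{0}|(x_{0}/|x_{0}|)$, valid since $\mathrm{int}(K_{f})\neq\emptyset$). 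This cancellation yields \eqref{3.12-1}.

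The main obstacle I expect is purely bookkeeping: the two translations $x_{0}$ (on the $\psi$-side) and $x_{0}/\beta_{1}$ (on the $\varphi$-side) must be tracked carefully through Legendre transforms, Jacobians, and the Gauss map, and one must confirm that the leftover first-order corrections are exactly the combination annihilated by Lemma \ref{lem9}. Once this bookkeeping is in place, the proof is a clean reduction.
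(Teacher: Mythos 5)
Your proposal is correct and follows essentially the same route as the paper: translate by $x_{0}\in(\dom\psi)\cap\mathrm{int}(\beta_{1}K_{f})$ via \eqref{def-psi-tilde-varphi}, verify that $\widetilde\varphi,\widetilde\psi$ satisfy \eqref{3.11}, apply Theorem \ref{thm8}, change variables back, and cancel the $x_{0}$-correction terms with Lemma \ref{lem9}. Your explicit Legendre-transform computation identifying $\widetilde f_{t}$ as a translate of $f_{t}$ is exactly the verification behind the paper's identity \eqref{equal-two-total mass} (note only the harmless typo $\widetilde g$ for $\widetilde\psi$).
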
 

\begin{proof} Let $x_0\in\dom \psi$ and hence $\frac{x_0}{\beta_1}\in \mathrm{int}(K_f)$. Let $\widetilde{\varphi}$ and  $\widetilde{\psi}$ be given in \eqref{def-psi-tilde-varphi}. As claimed above, $\widetilde{\varphi}$ and  $\widetilde{\psi}$  satisfy the condition \eqref{3.11}. Let \begin{align*} \widetilde f(x)&=(1-\alpha\widetilde\varphi(x))^{\frac{1}{\alpha}}=\Big(1-\alpha\varphi\Big(x+\frac{x_0}{\beta_1}\Big)\Big)^{\frac{1}{\alpha}},\\ \widetilde{f}_{t, \widetilde{\psi}} &=\big( 1-\alpha (\widetilde{\varphi}^*+t\widetilde{\psi}^*)^*\big)^{\frac{1}{\alpha}}. \end{align*}  It can be easily checked that, for all $t>0$ small enough, \begin{align} \int _{\Rn} f(x)\,dx=\int _{\Rn} \widetilde{f}(x)\,dx\ \ \mathrm{and}\ \ \int _{\Rn} f_t(x)\,dx=\int _{\Rn} \widetilde{f}_{t, \widetilde{\psi}}(x)\,dx. \label{equal-two-total mass}\end{align}

 Applying Theorem \ref{thm8} with $\widetilde f$ and $\widetilde \psi$, it follows from $K_{\widetilde f}= K_{ f}-\frac{x_0}{\beta_1}$,  $ \dom\widetilde\psi= \dom\psi-x_0$ and $z=x+\frac{x_0}{\beta_1}$ that 
\begin{align*}
        \lim_{t\to 0^{+}}\! \int_{\Rn}\!\!\!\!\! \frac{\widetilde{f}_{t, \widetilde{\psi}}(x)\!-\!\widetilde f(x)}{t}dx\! &=\!\!  \int_{\Rn}\widetilde\psi^*(\nabla \widetilde\varphi(x)) \widetilde f(x)^{1-\alpha}\,dx +\! \int_{\partial {K_{\widetilde f}}}{h_{\dwpsi}(\nu_{K_{\widetilde f}}(x))}\widetilde f(x)\,d\mathcal{H}^{n-1}(x)\nonumber\\&= \!\!\int_{\Rn} \widetilde\psi^*\bigg(\!\nabla  \varphi\Big(x+\frac{x_0}{\beta_1}\Big)\!\bigg) f\bigg(x+\frac{x_0}{\beta_1}\bigg)^{1-\alpha}\,dx \nonumber \\& \quad +\! \int_{\partial (K_{ f}-\frac{x_0}{\beta_1})}{h_{(\dpsi-x_0)}(\nu_{(K_{ f}-\frac{x_0}{\beta_1})}(x))}f\bigg(x+\frac{x_0}{\beta_1}\bigg)\,d\mathcal{H}^{n-1}(x)\nonumber\\
        &= \!\!\int_{\Rn} \!\!\widetilde\psi^*\big(\nabla  \varphi (z )\big) f(z)^{1-\alpha}dz \!+\! \!\int_{\partial K_{ f}}\!\!\!\!h_{(\dpsi-x_0)}(\nu_{K_{f}}\!(z))f(z)\,d\mathcal{H}^{n-1}(z)\nonumber\\
        &=  \!\!\int_{\Rn}\psi^*\left(\nabla \varphi(z)\right)  f(z)^{1-\alpha}\,dz + \int_{\partial {K_{ f}}}{h_{\dpsi}(\nu_{K_{ f}}(z)})f(z)\,d\mathcal{H}^{n-1}(z)\nonumber\\
        &\quad \!-\!\int_{\Rn}\langle x_0,\nabla \varphi(z)\rangle f(z)^{1-\alpha}\,dz - \!\int_{\partial {K_f}}\!\langle x_0,\nu_{K_f}(z)\rangle f(z)\,d\mathcal{H}^{n-1}(z)\nonumber \\ &=\!\!  \int_{\Rn}\psi^*\left(\nabla \varphi(z)\right)  f(z)^{1-\alpha}\,dz +\! \int_{\partial {K_{ f}}}{h_{\dpsi}(\nu_{K_{ f}}(z)})f(z)\,d\mathcal{H}^{n-1}(z),
    \end{align*} where in the last equality, we have used \eqref{3.12}. Combining with \eqref{equal-two-total mass}, we get 
\begin{align*}
        \lim_{t\to 0^{+}} \int_{\Rn}\!\!  \frac{f_{t}(x)-f(x)}{t}\,dx& =\!\lim_{t\to 0^{+}} \int_{\Rn}\!\! \frac{\widetilde{f}_{t, \widetilde{\psi}}(x)\!-\!\widetilde f(x)}{t}\,dx \\  &= \! \int_{\Rn}\!\!\! \psi^*(\nabla \varphi(x))  f(x)^{1-\alpha}\,dx + \int_{\partial {K_{ f}}}\!\!\! {h_{\dpsi}(\nu_{K_{ f}}(x)})f(x)\,d\mathcal{H}^{n-1}(x).
    \end{align*} This completes the proof of \eqref{3.12-1}.\end{proof}

The following corollary follows directly from Theorem \ref{thm10} and Definition \ref{Definition-first-variation}.

\begin{theorem}\label{MainTheorem1}
   Let $-\frac{1}{n}<\alpha < 0$, and $f=(1-\alpha\varphi)^{\frac{1}{\alpha}}\in \CA(\Rn) $. Assume that $g = (1-\alpha\psi)^{\frac{1}{\alpha}}\in \mathcal{C}_{\alpha}^{+}(\mathbb{R}^{n})$, where $\psi\in \conv(\Rn)$, and there exist constants $\beta_{1}> 0$ and $\beta_{2}\in\R$  satisfying $$ (\dom \psi) \cap\mathrm{int}(\beta_1K_f)\neq\emptyset
 \ \ \mathrm{and}\ \ \psi^*\leq\beta_1\varphi^*+\beta_2.$$ Then, the following equality holds:
\begin{align*}
       \JD(f,g)= \int_{\Rn} \psi^*\!\left(\nabla \varphi(x)\right)\! f(x)^{1-\alpha}\,dx + \! \! \int_{\partial {K_f}} {h_{K_g}(\nu_{K_f}(x))}f(x)\,d\mathcal{H}^{n-1}(x).
    \end{align*}
\end{theorem}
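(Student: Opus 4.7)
The plan is to recognize Theorem \ref{MainTheorem1} as a direct corollary of the main variational calculation already established in Theorem \ref{thm10}, so the proof amounts to carefully unpacking the definitions and matching notation.

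First, I would expand $\JD(f,g)$ using Definition \ref{Definition-first-variation}:
\begin{equation*}
\JD(f,g) = \lim_{t\to 0^+} \frac{\Ja(f\oplus_\alpha t\bigcdot_\alpha g) - \Ja(f)}{t}.
\end{equation*}
Since $g=(1-\alpha\psi)^{1/\alpha}$ has base function $\psi$, the combination formula \eqref{a-combination} gives
\begin{equation*}
f\oplus_\alpha t\bigcdot_\alpha g \;=\; \gen\!\big((\varphi^* + t\psi^*)^*\big) \;=\; \big(1 - \alpha(\varphi^* + t\psi^*)^*\big)^{1/\alpha},
\end{equation*}
which is precisely $f_t$ as defined in \eqref{define-f-1-1}. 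Consequently
\begin{equation*}
\JD(f,g) = \lim_{t\to 0^+} \int_{\Rn} \frac{f_t(x) - f(x)}{t}\,dx,
\end{equation*}
assuming the limit exists.

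Next, I would verify that the hypotheses of Theorem \ref{thm10} are satisfied. By assumption, $\psi \in \conv(\Rn)$, $(\dom\psi)\cap\mathrm{int}(\beta_1 K_f)\neq\emptyset$, and $\psi^* \leq \beta_1\varphi^* + \beta_2$, which is exactly the hypothesis of Theorem \ref{thm10}. Applying that theorem yields
\begin{equation*}
\JD(f,g) = \int_{\Rn}\psi^*(\nabla\varphi(x))\,f(x)^{1-\alpha}\,dx \;+\; \int_{\partial K_f} h_{D_\psi}(\nu_{K_f}(x))\,f(x)\,d\mathcal{H}^{n-1}(x),
\end{equation*}
where $D_\psi = \overline{\dom\psi}$.

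Finally, I would identify $h_{D_\psi}$ with $h_{K_g}$. Since $g \in \CA(\Rn)$ with base function $\psi$, the support is $K_g = \mathrm{Supp}(g) = \overline{\dom\psi} = D_\psi$, so the support functions agree: $h_{K_g} = h_{D_\psi}$. Substituting this identification gives the desired formula in Theorem \ref{MainTheorem1}. There is no substantive obstacle here, since all the analytic heavy lifting (dominated convergence on both the interior and boundary contributions, the radial derivative computation of Lemma \ref{lem7.1}, and the translation argument of Lemma \ref{lem9} used to remove the assumptions $o\in\mathrm{int}(K_f)$ and $\inf\psi^* > -\infty$) has already been absorbed into Theorem \ref{thm10}.
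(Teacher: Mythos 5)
Your proposal is correct and follows exactly the paper's route: the paper itself presents Theorem \ref{MainTheorem1} as a direct consequence of Theorem \ref{thm10} and Definition \ref{Definition-first-variation}, with the same identifications $f\oplus_\alpha t\bigcdot_\alpha g = f_t$ and $h_{K_g}=h_{\dpsi}$ (since $K_g=\overline{\dom\psi}$) that you spell out.
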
 

Theorem \ref{thm10} motivates the following definitions for two measures associated with the $\alpha$-concave function, one defined on $\Rn$ and the other   defined  on  $\sphere$.

\begin{definition}\label{}
Let $-\frac{1}{n}<\alpha < 0$ and $f=(1-\alpha\varphi)^{\frac{1}{\alpha}}\in \CA(\Rn)$. 

\vskip 2mm \noindent (i) Define the Borel measure $\mu_{\alpha}(f,\cdot)$, which is called the Euclidean surface area measure  of the  $\alpha$-concave function $f$,  as follows: for every Borel subset $\eta\subset \mathbb{R}^n$,
\begin{align}\label{euclidean-moment-measure}
\mu_{\alpha}(f,\eta)=\int_{\big\{x\in\mathbb{R}^n:\nabla\varphi(x)\in \eta\big\}} f(x)^{1-\alpha}\,dx.
\end{align}
where $\nabla\varphi$ is the gradient of $\varphi$. That is,  $\mu_{\alpha}(f, \cdot)=(\nabla\varphi)_{\sharp}(f^{1-\alpha}\,dx)$, the push-forward measure of $f^{1-\alpha}\,dx=  (1-\alpha\varphi)^{\frac{1-\alpha}{\alpha}}\,dx $ under the map $\nabla \varphi$,

\vskip 2mm \noindent (ii)  Define the Borel measure $\nu_{\alpha}(f,\cdot)$, which is called the spherical surface area measure of the  $\alpha$-concave function $f$,  as follows:  for every Borel subset $\eta\subset S^{n-1}$,
\begin{align} \label{spherical-moment-measure}
\nu_{\alpha}(f,\eta)=\int_{\big\{x\in{\partial{K_f}}:\ \nu_{K_f}(x)\in \eta\big\}}f(x)\,d\mathcal{H}^{n-1}(x).
\end{align} That is,   $\nu_{\alpha}(f, \cdot)=(\nu_{K_{f}})_{\sharp}\big(f\,d\mathcal{H}^{n-1}|_{\partial K_{f}}\big).$ 
\end{definition}   When $\varphi\geq 0$ satisfies that  $\nabla \varphi: \mathrm{int}(\dom\varphi) \rightarrow \mathrm{int}(\dom\varphi^*)$ is smooth and bijective, the measure $\mu_{\alpha}(f, \cdot)$ is absolutely continuous with respect to the Lebesgure measure and satisfies \begin{align}\label{density-eucldiean measure}
\frac{d\mu_{\alpha}(f, y)}{dy}=\det(\nabla^2\varphi^*(y)) \Big(1-\alpha \big(\langle y, \nabla \varphi^*(y)\rangle -\varphi^*(y)\big)\Big)^{\frac{1-\alpha}{\alpha}} \ \ \mathrm{for} \ \  y\in \Rn. \end{align} In fact, under the above assumptions on $\varphi,$ one has,  \begin{align}
\varphi^*(\nabla \varphi(x))= \langle x , \nabla \varphi(x) \rangle -\varphi(x). \label{def-dual-2}
\end{align} It follows from \eqref{euclidean-moment-measure} that \begin{align}\label{equivalent-f-varphi}
    \int_{\Rn} g(y)\,d\mu_{\alpha}(f, y)=\int_{\Rn}g(\nabla \varphi(x))f(x)^{1-\alpha}\,dx
\end{align} holds for every Borel function $g$ such that $g \in L ^ 1 (\mu_{\alpha} (f,\cdot))$  or $g$ is non-negative. By  letting $y=\nabla \varphi(x)$ (equivalently, $x=\nabla \varphi^*(y)$),  it follows from \eqref{def-dual-2} and \eqref{equivalent-f-varphi} that 
\begin{align*}
\int_{\Rn}g(y)\, d\mu_{\alpha} (f, y)& =\int_{\Rn}g(\nabla \varphi(x))(1-\alpha \varphi(x))^{\frac{1-\alpha}{\alpha}} \,dx\\&
=\int_{\Rn}g(y)  \det(\nabla^2\varphi^*(y)) \Big(1-\alpha \big(\langle y, \nabla \varphi^*(y)\rangle -\varphi^*(y)\big)\Big)^{\frac{1-\alpha}{\alpha}}  \,dy,
\end{align*} which further implies \eqref{density-eucldiean measure}, as desired.

Having defined the Euclidean and spherical surface area measures of an $\alpha$-concave function (for $-\frac{1}{n}< \alpha< 0$), it is natural and interesting to study the following Minkowski problem related to $\alpha$-concave functions. 
\begin{problem}[The functional Minkowski problem for $\alpha$-concave functions]\label{Mink-double-1} Let $ -\frac{1}{n}<\alpha< 0$, and let $\mu$ and $\nu$ be two Borel measures defined on $\Rn$ and respectively, on $\sphere$. Find the necessary and/or sufficient conditions on $\mu$ and $\nu$, such that there exists an $\alpha$-concave function $f$ satisfying 
$$\mu =  \mu_{\alpha}(f,\cdot) \ \ \mathrm{and}\ \ \nu= \nu_{\alpha}(f, \cdot).$$   
\end{problem} When $\nu_{\alpha}(f, \cdot)=0$,   
Problem \ref{Mink-double-1} can be rewritten as follows. 

\begin{problem}[The Euclidean functional Minkowski problem for $\alpha$-concave functions] \label{problem-E-FMP} Let $ -\frac{1}{n}<\alpha< 0$  and $\mu$  be a Borel measure  defined on $\Rn$. Find the necessary and/or sufficient conditions on $\mu$, such that there exists an $\alpha$-concave function $f$ satisfying 
\begin{align}
    \mu =   \mu_{\alpha}(f,\cdot)\ \ \ \text{and}\ \ \ 0=\nu_{\alpha}(f,\cdot).\label{mu=mu-alpha}
\end{align}   
\end{problem}  According to \eqref{density-eucldiean measure}, if $\,d\mu(y)=h(y)\,dy$ and $\varphi\geq 0$ is smooth enough, then equation \eqref{mu=mu-alpha}  reduces to   the following Monge-Amp\'{e}re equation: 
\begin{align*}
h(\nabla\varphi(y)){\rm det}(\nabla^2\varphi(y))= \big(1-\alpha \varphi(y)\big)^{\frac{1-\alpha}{\alpha}},  
\end{align*}  with $\varphi$ being the unknown nonnegative convex function on $\Rn.$

\section{\texorpdfstring{The Minkowski problem for $\alpha$-concave functions and $\alpha$-concave measures}{The Minkowski problem for alpha-concave functions and alpha-measures}} \label{section-solution}
We will show the  necessary conditions to Problem \ref{problem-E-FMP} in Subsection \ref{necessary-1}. In Subsection \ref{section-extended}, we will solve a Minkowski problem extending Problem \ref{problem-E-FMP} to $\alpha$-concave measures instead of $\alpha$-concave functions. The main technique used in Subsection \ref{section-extended} is optimal transport, which has been successfully used in solving the Minkowski type problems for the moment measures of log-concave functions in \cite{San16} by Santambrogi and for the $q$-moment measure in \cite{HK21} by Huynh and Santambrogio.  
\subsection{\texorpdfstring{Properties of the Euclidean surface area measure $\mu_{\alpha}(f, \cdot)$}{}} \label{necessary-1}  A convex function $\phi : \mathbb{R}^n \to \mathbb{R} \cup \{+\infty\}$ is  \textit{essentially continuous} if it is lower semi-continuous and the set of its discontinuous points (on $\partial(\operatorname{dom}\phi)$) has zero   $\mathcal{H}^{n-1}$ measure. According to \cite[Lemma 3]{EK15}, if a convex function $\phi$ is essentially continuous, its restriction to almost every line must be  continuous. 

Let $\varphi$ be the base function of the $\alpha$-concave function $f$, that is, $f=(1-\alpha \varphi)^{\frac{1}{\alpha}}$. Then,  $\varphi$ is essentially continuous if and only if $f\equiv 0$ for $\mathcal{H}^{n-1}$-almost every $x\in \partial K_f$.   This is equivalent to say that the spherical moment  measure $\nu_{\alpha}(f,\cdot)$ defined in \eqref{spherical-moment-measure} is a zero measure. We call $f$ an\textit{ $\alpha$-concave function with essentially continuous base} if its base is essentially continuous.

The following lemma proves that the measure $\mu_{\alpha}(f, \cdot)=(\nabla\varphi)_{\sharp}(f^{1-\alpha}\,dx)$, for an $\alpha$-concave function $f\in \mathcal{C}_{\alpha}^{+}(\Rn)$ with essentially continuous base must have finite first moment and its  barycenter is the origin. Note that, for each $i=1, 2,\cdots, n$, we get  
\begin{align*}
       \int_{\Rn} x_i\,d\mu_{\alpha}(f, x)=\int_{\Rn}\frac{\partial \varphi(x)}{\partial x_{i}}(1-\alpha\varphi(x))^{\frac{1}{\alpha}-1}\,dx.
   \end{align*} If \eqref{barycenter=0} holds, then, for each $i=1, 2, \cdots, n,$  \begin{align*}
       \int_{\Rn} x_i\,d\mu_{\alpha}(f, x)=0.
   \end{align*} Hence for each $y\in \Rn,$  it holds that \begin{align*} 
       \int_{\Rn} \langle x, y\rangle\,d\mu_{\alpha}(f, x)=0.
   \end{align*} That is, the barycenter of $\mu_{\alpha}(f, \cdot)$ is at the origin. On the other hand, if \eqref{first-moment-finite} holds, then 
   \begin{align*}
       \int_{\Rn}|y| \,d\mu_{\alpha}(f, y)=\int_{\Rn}\left |\nabla \varphi(x)\right| f^{1-\alpha}(x)\,dx= \int_{\Rn}\left |\nabla f(x)\right|dx<\infty,
   \end{align*} and hence the first moment of $\mu_{\alpha}(f, \cdot)$ is finite. 

\begin{lemma}\label{Lem7}
    Let $-\frac{1}{n}<\alpha<0$ and $f\in \CA(\Rn)$ be an $\alpha$-concave function with  essentially continuous base $\varphi$. Then, 
    \begin{align}\label{first-moment-finite}
        \int_{\Rn}\left |\nabla f(x)\right|dx <+\infty,
    \end{align} 
    and, for any $i=1,\dots,n$,
   \begin{align}\label{barycenter=0}
       \int_{\Rn}\frac{\partial \varphi(x)}{\partial x_{i}}(1-\alpha\varphi(x))^{\frac{1}{\alpha}-1}\,dx=\int_{\Rn}\frac{\partial f(x)}{\partial x_{i}}\,dx =0.
   \end{align} 
     
\end{lemma}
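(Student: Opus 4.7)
\medskip

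\noindent\textbf{Proof plan for Lemma \ref{Lem7}.} The natural move is to exploit $\nabla f=-f^{1-\alpha}\nabla\varphi$ and reduce both claims in \eqref{barycenter=0} to a single fact: $\int_{\R^n}\partial_i f\,dx=0$ for each $i$. I will derive this by slicing in the $e_i$-direction and applying the fundamental theorem of calculus line-by-line, after first establishing \eqref{first-moment-finite} via a unimodality argument.

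Fix $i$ and write $x=(\widetilde x,t)$ with $\widetilde x\in\R^{n-1}$ and $t\in\R$. For a.e.\ $\widetilde x$, I plan to show that the slice $f_{\widetilde x}(t):=f(\widetilde x,t)$ is a continuous $\alpha$-concave function on $\R$, unimodal, that vanishes at $\pm\infty$ and at the two (possibly improper) endpoints of its support. The vanishing at $\pm\infty$ is immediate from the coercivity bound $\varphi(x)\ge a|x|+b$ in \eqref{coercive} together with $\alpha<0$. Continuity on $\mathrm{int}(K_f)\cap(\{\widetilde x\}\times\R)$ follows from the fact that a convex function is continuous on the interior of its domain. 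For the boundary endpoints, I invoke the essential continuity of $\varphi$: the discontinuity set $D\subset\partial K_f$ has $\mathcal{H}^{n-1}(D)=0$, and since $\partial K_f$ can be covered by finitely many graphs of convex functions over hyperplanes orthogonal to $e_i$, the projection $\pi_i(D)\subset\R^{n-1}$ has $\mathcal{L}^{n-1}$-measure zero; thus for a.e.\ $\widetilde x$, the at-most-two boundary points of the slice lie outside $D$, hence $f_{\widetilde x}$ extends continuously by zero at these endpoints.

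Granting this, set $M(\widetilde x):=\sup_{t\in\R}f_{\widetilde x}(t)$. Unimodality of $f_{\widetilde x}$ (a continuous $\alpha$-concave function with $\alpha<0$ is monotone on each side of its maximizer) yields
\begin{equation*}
\int_{\R}|\partial_t f_{\widetilde x}(t)|\,dt=2M(\widetilde x).
\end{equation*}
Since $M(\widetilde x)=(1-\alpha\,\widetilde\varphi(\widetilde x))^{1/\alpha}$ where $\widetilde\varphi(\widetilde x):=\inf_{t}\varphi(\widetilde x,t)$, and the coercivity bound $\varphi(\widetilde x,t)\ge a|(\widetilde x,t)|+b\ge a|\widetilde x|+b$ gives $\widetilde\varphi(\widetilde x)\ge a|\widetilde x|+b$, we obtain the pointwise estimate
\begin{equation*}
M(\widetilde x)\le \bigl(1-\alpha a|\widetilde x|-\alpha b\bigr)^{1/\alpha}.
\end{equation*}
Because $-\tfrac{1}{n}<\alpha$ implies $-\tfrac{1}{n-1}<\alpha$, the right-hand side is integrable over $\R^{n-1}$ by exactly the computation in Lemma \ref{Lem2.2} (applied in dimension $n-1$ with $l=p=0$). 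Fubini then gives
\begin{equation*}
\int_{\R^n}|\partial_i f|\,dx\le 2\int_{\R^{n-1}}M(\widetilde x)\,d\widetilde x<+\infty,
\end{equation*}
and summing over $i$ establishes \eqref{first-moment-finite}.

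For \eqref{barycenter=0}, continuity of $f_{\widetilde x}$ on $\R$, together with $f_{\widetilde x}(\pm\infty)=0$ and $f_{\widetilde x}$ being locally Lipschitz on $\mathrm{int}(K_f)\cap(\{\widetilde x\}\times\R)$ (since $\varphi$ is locally Lipschitz on the interior of its domain) and identically $0$ outside $\overline{K_f}\cap(\{\widetilde x\}\times\R)$, lets us apply the fundamental theorem of calculus to conclude $\int_{\R}\partial_t f_{\widetilde x}(t)\,dt=0$ for a.e.\ $\widetilde x$. Fubini (justified by the integrability just obtained) yields $\int_{\R^n}\partial_i f\,dx=0$. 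The identity $\partial_i f=-\partial_i\varphi\,(1-\alpha\varphi)^{1/\alpha-1}$, valid a.e., then transforms this into the first equality of \eqref{barycenter=0}. The main technical obstacle is the continuous-extension-to-zero at boundary endpoints of the slice; the essential continuity hypothesis is exactly what rules out the bad case where $f_{\widetilde x}$ would have a jump at an endpoint, in which event the total-variation bound $2M(\widetilde x)$ and the slicewise vanishing of $\int\partial_t f_{\widetilde x}$ would both fail.
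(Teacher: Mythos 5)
Your proposal is correct and follows essentially the same route as the paper's proof: slice in the $e_i$-direction, use unimodality of the slice (convexity and coercivity of $t\mapsto\varphi(\widetilde x,t)$) to get $\int_{\R}|\partial_t f|\,dt=2\sup_t f(\widetilde x,t)\le 2\big(1-\alpha a|\widetilde x|-\alpha b\big)^{1/\alpha}$, invoke the Lemma \ref{Lem2.2} computation in dimension $n-1$ (valid since $-\tfrac{1}{n-1}<-\tfrac1n<\alpha$) plus Tonelli for \eqref{first-moment-finite}, and use continuity of almost every slice together with the fundamental theorem of calculus and Fubini for \eqref{barycenter=0}. The only real difference is that where the paper simply cites \cite[Lemma 3]{EK15} for continuity of the restriction to almost every line, you re-derive it by projecting the $\mathcal{H}^{n-1}$-null discontinuity set $D\subset\partial K_f$; this works, but your intermediate claim that $\partial K_f$ is covered by finitely many convex graphs over $e_i^{\perp}$ is both unnecessary and problematic for the part of $\partial K_f$ parallel to $e_i$ --- it suffices to note that the orthogonal projection onto $e_i^{\perp}$ is $1$-Lipschitz and therefore maps $\mathcal{H}^{n-1}$-null sets to $\mathcal{L}^{n-1}$-null sets.
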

\begin{proof} The proof follows essentially the approach used in 
 \cite[Lemma 4]{EK15}. For completeness, we will give the proof but with modification concentrated.

    First of all, we verify the following claim:  for   any $i=1,\dots,n$,
    $$
    \int_{\Rn} \left| \frac{\partial f(x)}{\partial x_{i}} \right|dx < +\infty.
    $$ To this end, without  loss of generality, let $i=n$ and write $x=(x',t)$ for any $x\in \Rn$. It is easily checked that the function $t\mapsto\varphi(x',t)$ is convex and  coercive, hence it is  non-increasing in  $(-\infty,c]$ and non-decreasing in $[c, \infty)$ for some $c\in \R.$ Thus,   $t\mapsto f(x',t)$ is non-decreasing in $(-\infty,c]$ and non-increasing in $[c, \infty)$. Moreover,  $f$ is locally Lipschitz since it is the composition of the generator function $\Psi_{\alpha}$, which is convex in $[0,+\infty)$, and the convex function $\varphi$. Therefore, $ \frac{\partial f(x', t)}{\partial x_n }=\frac{\partial f(x', t)}{\partial t}$ exists almost everywhere. Applying  the Lebesgue version of the Fundamental Theorem of Calculus \cite[Theorem 3.35]{Fol99}, we get, for any $x'\in \R^{n-1},$
    \begin{align*}
        \int_{\R}\left |\frac{\partial f(x',t)}{\partial x_{n}}\right|dt &=  \int_{\R}\left |\frac{\partial f(x',t)}{\partial t}\right|dt=\int_{-\infty}^{c}\frac{\partial f(x',t)}{\partial t}\,dt-\int_{c}^{+\infty}\frac{\partial f(x',t)}{\partial t}\,dt\\
        &= 2f(x',c)=2\sup_{t\in\R}f(x',t)=2\left( 1-\alpha\inf_{t\in\R}\varphi(x',t)\right)^{\frac{1}{\alpha}}.
    \end{align*} By the Tonelli theorem, we have \begin{align}
        \label{application-T} \int_{\Rn}\left |\frac{\partial f(x)}{\partial x_{n}}\right|\,dx=\int_{\mathbb{R}^{n-1}}\int_{\R}\left |\frac{\partial f(x',t)}{\partial t}\right|\,dt\,dx'=2\int_{\mathbb{R}^{n-1}}\left( 1-\alpha\inf_{t\in\R}\varphi(x',t)\right)^{\frac{1}{\alpha}}dx'. 
    \end{align} 
Note that $ x'\mapsto \inf_{t\in\R}\varphi(x',t)$ is a function 
defined on $\R^{n-1}$ and satisfies \begin{align*}
 \inf_{t\in\R}\varphi(x',t)\geq a\inf_{t\in\R}|(x', t)|+b =a|x'|+b,  
\end{align*}  where $a>0$ and $b\in \R$ are given by  \eqref{coercive}. Thus, $x'\mapsto \inf_{t\in\R}\varphi(x',t)$ verifies the assumption which is the key ingredient in  the proof of Lemma \ref{Lem2.2} and $-\frac{1}{n-1}<-\frac{1}{n}<\alpha<0$. Therefore, the integral in \eqref{application-T} must be   finite following the proof of Lemma \ref{Lem2.2}. Consequently, \eqref{first-moment-finite} follows immediately due to the fact that $$|\nabla f|\leq \sqrt{n} \max \Big\{  \Big|\frac{\partial{f}}{\partial x_i}\Big|,\ i=1, 2, \cdots, n\Big\}.$$

We can apply  \cite[Lemma 3]{EK15} to get that   $t\mapsto f(x',t)$ is continuous for almost every $x'$. Let $[a_{1},b_{1}]$ be  the support of $t\mapsto f(x',t)$, where $a_{1}$ and $b_{1}$ may be  infinity. Then, 
    $$\int_{\R}\frac{\partial f(x',t)}{\partial x_{n}}\,dt=\int_{[a_{1},b_{1}]}\frac{\partial f(x',t)}{\partial t}\,dt = \lim_{t\to b_{1}}f(x',t) - \lim_{t\to a_{1}}f(x',t) =0.$$
    Using the Fubini’s theorem, we get 
    $$\int_{\Rn}\frac{\partial  f(x)}{\partial x_{n}}\,dx = \int_{\mathbb{R}^{n-1}}\left( \int_{\R}\frac{\partial f(x',t)}{\partial x_{n}}\,dt\right)dx'=0.$$ This completes the proof. 
\end{proof}

 Our next lemma is to prove that $\mu_{\alpha}(f,\cdot)$ is not concentrated in any  proper linear subspace of $\Rn$, and hence not concentrated in any hyperplane (due to the fact that the barycenter of $\mu_{\alpha}(f, \cdot)$ is at the origin $o$). 
\begin{lemma}\label{Lem8}
     Let $-\frac{1}{n}<\alpha<0$ and $f=(1-\alpha \varphi)^{\frac{1}{\alpha}}\in \CA(\Rn)$ be an $\alpha$-concave function with  essentially continuous base $\varphi$ such that  $0<\Ja(f)<+\infty$. Then, $\mu_{\alpha}(f, \cdot)$ is not concentrated in any proper linear subspace of $\Rn$.
\end{lemma}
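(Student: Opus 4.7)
The strategy is proof by contradiction. Every proper linear subspace of $\Rn$ is contained in a hyperplane $u^{\perp}$ with $u\in\sphere$, so it suffices to show
\[
\mu_{\alpha}(f,u^{\perp})<\int_{\Rn}f(x)^{1-\alpha}\,dx
\]
for every unit vector $u$. Suppose to the contrary that equality holds for some $u\in\sphere$. Since $\mu_{\alpha}(f,\cdot)=(\nabla\varphi)_{\sharp}(f^{1-\alpha}\,dx)$ and $f^{1-\alpha}>0$ exactly on $\dom\varphi$, equality forces $\langle u,\nabla\varphi(x)\rangle=0$ for Lebesgue-a.e.\ $x\in\mathrm{int}(\dom\varphi)$; the hypothesis $J_{\alpha}(f)>0$ guarantees that this set has positive Lebesgue measure.

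My next step would be to reduce to one-dimensional slices. Write $x=x_{0}+tu$ with $x_{0}\in u^{\perp}$, and set $g_{x_{0}}(t):=\varphi(x_{0}+tu)$, which is convex in $t$. By Fubini's theorem, for a.e.\ $x_{0}\in u^{\perp}$ the derivative $g_{x_{0}}'(t)=\langle u,\nabla\varphi(x_{0}+tu)\rangle$ exists and vanishes for a.e.\ $t$ in the open interval $I_{x_{0}}\subset\R$ on which $g_{x_{0}}$ is finite. A convex function whose derivative vanishes a.e.\ on an interval is constant there, so $g_{x_{0}}\equiv c_{x_{0}}$ on $I_{x_{0}}$ for some finite $c_{x_{0}}\geq 0$.

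The contradiction then comes from combining essential continuity with coercivity. By \cite[Lemma 3]{EK15}, $\varphi$ restricted to a.e.\ line in direction $u$ is continuous as an $\R\cup\{+\infty\}$-valued function. If $I_{x_{0}}$ were bounded on either side, $g_{x_{0}}$ would jump from the finite value $c_{x_{0}}$ inside $I_{x_{0}}$ to $+\infty$ just outside, which is incompatible with such continuity; hence $I_{x_{0}}=\R$ and $g_{x_{0}}$ is finite and constant on all of $\R$ for a.e.\ such $x_{0}$. But the coercivity estimate \eqref{coercive} yields $g_{x_{0}}(t)\geq a|x_{0}+tu|+b\geq a|t|-a|x_{0}|+b\to+\infty$ as $|t|\to\infty$, a contradiction, which completes the proof.

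The main subtlety will be to coordinate the three separate \emph{a.e.\ in $x_{0}$} statements -- one from Fubini applied to $\{\langle u,\nabla\varphi\rangle\neq 0\}$, one from a.e.\ differentiability of the convex slice $g_{x_{0}}$, and one from the line-wise continuity supplied by essential continuity -- so that all three hold on a common positive-measure set of $x_{0}\in u^{\perp}$ whose slice actually meets $\mathrm{int}(\dom\varphi)$. The hypothesis $0<J_{\alpha}(f)$ combined with Fubini secures such a positive-measure set, on which the three derived properties (constancy along $u$, line-wise continuity, and coercivity) are manifestly incompatible.
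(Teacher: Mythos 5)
Your proof is correct and follows essentially the same route as the paper: assume concentration in a hyperplane $u^{\perp}$, deduce that the directional derivative in direction $u$ vanishes almost everywhere on the domain, conclude constancy along almost every line parallel to $u$, and invoke \cite[Lemma 3]{EK15} (line-wise continuity coming from essential continuity) to upgrade this to constancy on entire lines. The only real difference is the endgame: the paper works with $f$ itself and contradicts $0<\Ja(f)<+\infty$ via Fubini, whereas you work with the base $\varphi$ and contradict the coercivity built into $\CA(\Rn)$, using only $\Ja(f)>0$; both endings are valid under the stated hypotheses.
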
 \begin{proof} The proof follows essentially the approach used in  \cite[Lemma 6]{EK15} . For completeness, we will give the proof but with modification concentrated. 

Assume the opposite, that is,  the support of $\mu_{\alpha}(f, \cdot)$ is contained in a proper linear subspace, say  $ e_n^{\perp}=\big \{x\in \Rn: x_n=0 \big\}$, the proper linear subspace with normal vector $e_{n}$. It follows from $\mu_{\alpha}(f, \cdot)=(\nabla\varphi)_{\sharp}(f^{1-\alpha}\,dx)$ that 
   \begin{align*}
   0 &=\int_{ \Rn\setminus e_n^{\perp} }\left |x_{n}\right|d\mu_{\alpha}(f, x)+\int_{   e_n^{\perp}}\left |x_{n}\right|d\mu_{\alpha}(f, x)\\ & =\int_{\Rn}\left |x_{n}\right|d\mu_{\alpha}(f, x) \\ &= \int_{\Rn}\left |\frac{\partial \varphi(x)}{\partial x_{n}}\right|(1-\alpha\varphi(x))^{\frac{1}{\alpha}-1}\,dx\\&=\int_{\Rn}\Big|\frac{\partial f}{\partial x_n}\Big|\,dx.    
   \end{align*} Consequently, for almost all $x=(x', t)\in \Rn,$
   we have 
   $$
    \frac{\partial f(x)}{\partial x_n}=\frac{\partial f(x',t)}{\partial t}=0.
    $$   
    
    By the locally Lipschitz property of $f$ in $\mathrm{int}(K_f)$ and the Lebesgue version of  the Fundamental Theorem of Calculus, for almost $x'\in \R^{n-1}$, if  $(x', t)\in \mathrm{int}(K_f)$, then $f(x', t)$ is a constant on $t$. On the other hand,  by \cite[Lemma 3]{EK15}, for almost all $x'\in \R^{n-1},$ $t\mapsto f(x',t)$  is  continuous on $t$ in $\R$, which implies $f(x' ,t)$ is a constant on $t\in \R.$  Therefore, $\int_{\R}f(x',t)\,dt\in \{0,+\infty\}$ for almost all $x'\in \R^{n-1}$. This is a contradiction to the assumption that $0<\Ja(f)<\infty$, because the  Fubini's theorem gives $$\Ja(f)=\int_{\Rn} f(x)\,dx \in \{0,+\infty\}.$$ This completes the proof. 
\end{proof}

Combining Lemmas \ref{Lem7} and \ref{Lem8}, we can get the following result. 
\begin{theorem}\label{Nec-Conditions}
    Let $-\frac{1}{n}<\alpha<0$ and $f=(1-\alpha\varphi)^\frac{1}{\alpha}\in \CA(\Rn)$ have  an essentially continuous base $\varphi$ such that  $0<\Ja(f)<+\infty.$  Then, $$\mu_{\alpha}(f, \cdot)=(\nabla\varphi)_{\sharp} (f^{1-\alpha}\,dx) $$ has a finite first moment, its barycenter is at the origin and its support is not contained in any hyperplane. 
\end{theorem}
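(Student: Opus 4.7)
The plan is to read off all three properties directly from the preceding Lemmas \ref{Lem7} and \ref{Lem8}, after translating statements about $\mu_{\alpha}(f,\cdot)$ into Lebesgue integrals via the push-forward formula and a chain-rule identity. The only preparatory observation needed is that, since $f=(1-\alpha\varphi)^{1/\alpha}$, wherever $\varphi$ is differentiable one has
\begin{equation*}
\nabla f(x)=-(1-\alpha\varphi(x))^{\frac{1}{\alpha}-1}\nabla\varphi(x)=-f(x)^{1-\alpha}\nabla\varphi(x),
\end{equation*}
so in particular $|\nabla f|=f^{1-\alpha}|\nabla\varphi|$ and $\partial_i f=-f^{1-\alpha}\,\partial_i\varphi$ almost everywhere.

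For the finite first moment, I would use $\mu_{\alpha}(f,\cdot)=(\nabla\varphi)_{\sharp}(f^{1-\alpha}\,dx)$ to rewrite
\begin{equation*}
\int_{\Rn}|y|\,d\mu_{\alpha}(f,y)=\int_{\Rn}|\nabla\varphi(x)|\,f(x)^{1-\alpha}\,dx=\int_{\Rn}|\nabla f(x)|\,dx,
\end{equation*}
which is finite by \eqref{first-moment-finite} in Lemma \ref{Lem7}. For the barycenter, the same change of variables gives, for each $i$,
\begin{equation*}
\int_{\Rn}y_i\,d\mu_{\alpha}(f,y)=\int_{\Rn}\frac{\partial\varphi}{\partial x_i}(x)\,f(x)^{1-\alpha}\,dx=-\int_{\Rn}\frac{\partial f}{\partial x_i}(x)\,dx=0
\end{equation*}
by \eqref{barycenter=0}, so every coordinate of the barycenter vanishes.

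Finally, for the support statement I would argue by contradiction: if the support of $\mu_{\alpha}(f,\cdot)$ were contained in a hyperplane $H=\{y\in\Rn:\langle y,\nu\rangle=c\}$ for some unit $\nu$ and $c\in\R$, then integrating $\langle\cdot,\nu\rangle$ against $\mu_{\alpha}(f,\cdot)$ and using the barycenter computation above yields $c=\langle o,\nu\rangle=0$, so $H=\nu^{\perp}$ is a proper linear subspace. This directly contradicts Lemma \ref{Lem8}, whose hypotheses ($-\tfrac{1}{n}<\alpha<0$, essentially continuous base, $0<\Ja(f)<\infty$) match ours exactly. No step is really an obstacle here: all the analytic work (the Lebesgue differentiation argument behind the vanishing of $\int\partial_i f$, the Fubini/coercivity argument behind finiteness of $\int|\nabla f|$, and the connectedness-of-fiber argument ruling out subspace concentration) is already encapsulated in Lemmas \ref{Lem7} and \ref{Lem8}; the present theorem is simply the push-forward repackaging of those conclusions.
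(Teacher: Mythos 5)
Your proposal is correct and follows essentially the same route as the paper: the first moment and barycenter statements are exactly the push-forward reformulations of \eqref{first-moment-finite} and \eqref{barycenter=0} from Lemma \ref{Lem7} (as noted in the discussion preceding it), and the hyperplane exclusion combines Lemma \ref{Lem8} with the barycenter-at-origin fact just as the paper does. The only (trivial) point left implicit is that $\mu_{\alpha}(f,\Rn)=\int_{\Rn}f^{1-\alpha}\,dx>0$, which follows from $\Ja(f)>0$ and is needed to conclude $c=0$ in your contradiction argument.
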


The results in Theorem \ref{Nec-Conditions} provide the necessary conditions to the solution to Problem \ref{problem-E-FMP}, namely, $\mu$ is a finite nonzero Borel measure on $\Rn$ whose first moment is finite, barycenter is at the origin, and support is not contained in any hyperplane. 

\subsection{A  solution to an extended Minkowski problem  via optimal transport} \label{section-extended} In this section, we will show that the conditions in Theorem \ref{Nec-Conditions} are also sufficient to solve a Minkowski problem weaker than Problem \ref{problem-E-FMP}. We will prove the sufficiency using optimal transport as did in  \cite{HK21, San16}.

Recall that  $ \varrho\ll \mathcal{L}^{n}$ if $\varrho$ is absolutely continuous with respect to  $ \mathcal{L}^{n}$. It follows from the Radon-Nikodym theorem that there exists a density function $\rho$ such that $\, \,d\varrho =\rho \,dx$.  In order to state our weak Minkowski problem, we need to consider  the measure  \begin{align}\label{a-con-measure}\varrho = \rho\,dx + \varrho^{s},
\end{align}
where, for some convex function $\varphi\geq \frac{1}{\alpha}$,
$$\rho(x)=  \left( 1-\alpha\varphi(x)\right)^{\frac{1}{\alpha}};$$ and the singular part $\varrho^{s}$ is mutually singular with respect to the Lebesgue measure, denoted by $\varrho^{s}\perp \mathcal{L}^{n}$, and is concentrated on the set $\left\{\varphi =  \frac{1}{\alpha}\right\}$. 
When a finite measure  $\varrho$ is not absolutely continuous with respect to $\mathcal{L}^{n}$,   then both  $\rho\,dx$ and   $\varrho^s\neq 0$ are   finite. If $\varrho$ satisfies \eqref{a-con-measure}, then $\varrho$ is called  an \textit{$\alpha$-concave measure}. For the total mass $|\varrho|=\int_{\Rn}\,d\varrho$  to be finite, the set $\left\{\varphi =  \frac{1}{\alpha}\right\}$ must be a $\mathcal{L}^{n}$-null set. In the particular case when $\left\{\varphi =  \frac{1}{\alpha}\right\}$ is singleton, $\rho^{s}$ must be a multiple of a Dirac delta measure.

We extend the surface area measures from $\alpha$-concave functions to $\alpha$-concave measures. 

\begin{definition}\label{def-measure-measure}
    Let $\varrho$ be a finite  $\alpha$-concave measure as given in \eqref{a-con-measure}. One says that $\mu$ is a    \textit{Euclidean surface area measure}    of $\varrho$,  if there is a measure $\pi$ on $\mathbb{R}^{n}\times \mathbb{R}^{n}$, whose marginals are $\rho^{1-\alpha}\,dx+\varrho^s$ and $\mu$:$$\pi(A\times \mathbb{R}^{n}) = \big(\rho^{1-\alpha}\,dx+\varrho^s\big)(A)\ \ \mathrm{and} \ \ \pi(\mathbb{R}^{n}\times B) = \mu(B)$$  for any $\big(\rho^{1-\alpha}\,dx+\varrho^s\big)$-measurable set $A$ and  $\mu$-measurable set $B$; and $$\mathrm{Supp}(\pi)\subset \mathrm{Graph}(\partial \varphi):=\{(x,y)\in \mathbb{R}^{n}\times \mathbb{R}^{n}:y\in \partial \varphi(x)\},$$i.e.,  $ 
y\in \partial \varphi(x) 
$  for $\pi$-almost every $(x,y)$. The spherical surface area of $\varrho$ is defined by $$(\nu_{K_{\rho}})_{\sharp}\big(\rho\,d\mathcal{H}^{n-1}|_{\partial K_{\rho}}\big).$$\end{definition}

The Minkowski problem for $\alpha$-concave functions can be extended as follows. 

\begin{problem}[The extended Minkowski problem for $\alpha$-concave measures] \label{problem-FMP-measure}
 Let $-\frac{1}{n}<\alpha<0$ and let $\mu$  be a Borel measure  on $\Rn$ and $\nu$ be a Borel measure on $S^{n-1}$. Does  there exist an $\alpha$-concave measure $\varrho$, given by \eqref{a-con-measure}, such that $\mu$ and $\nu$ are the Euclidean and the spherical surface area measures of $\varrho$, respectively?      
\end{problem} 

The extended version of Problem \ref{problem-E-FMP} can be stated as follows.

\begin{problem} \label{problem-E-FMP-measure}
    Let $-\frac{1}{n}<\alpha<0$ and $\mu$ be a Borel measure on $\mathbb{R}^{n}$. Does there exists an $\alpha$-concave measure $\varrho$ such that $\mu$ is the Euclidean surface area measure of $\varrho$ and the spherical surface area measure of $\varrho$ is the zero measure?
\end{problem}

In this paper, we only focus on Problem \ref{problem-E-FMP-measure}. In general, however, $\mu$ may not be uniquely determined. Notably, for almost every $x\in  \left\{\varphi > \frac{1}{\alpha}\right\}$, we have $y = \nabla \varphi(x)$. When $\inf \varphi=\frac{1}{\alpha}$  and $\varphi$ is differentiable at 
  $x\in \left\{\varphi=  \frac{1}{\alpha}\right\}$, we have $\partial \varphi(x) = \{o\}$,  and hence  $
\mu = (\nabla\varphi)_{\sharp}(\rho^{1-\alpha}\,dx+\varrho^s).
$  In particular, if $\varphi>\frac{1}{\alpha}$ on $\Rn,$ then $
\mu = (\nabla\varphi)_{\sharp} (\rho^{1-\alpha}\,dx)
$ is exactly the Euclidean surface area measure of the $\alpha$-concave function $\rho=(1-\alpha \varphi)^{\frac{1}{\alpha}}$ defined in \eqref{euclidean-moment-measure} and the extended Minkowski problem (i.e., Problem \ref{problem-E-FMP-measure}) reduces to the Euclidean functional Minkowski problem for $\alpha$-concave functions (i.e.,  Problem \ref{problem-E-FMP}).


Let $\varrho$ be a probability measure on $\mathbb{R}^{n}$.  In order to solve the extended Minkowski problem for $\alpha$-concave measures, we need to define   \begin{align*}\label{R-D-theorem} \mathcal{F}_{\alpha}(\varrho )=-\int_{\Rn}\rho^{\frac{1}{1-\alpha}}(x)\,dx \end{align*} for any finite  $\alpha$-concave measure  $\varrho=\rho\,dx+\varrho^{s}$. Recall that $\mathcal{T}(\varrho,\mu)$ is defined in \eqref{def-T-2} by:  
\begin{align*}  
\mathcal{T}(\varrho,\mu):&=\sup\left\{\int_{\mathbb{R}^{n}\times \mathbb{R}^{n}}\langle x,  y\rangle\,d\pi(x, y):\pi\in\Pi(\varrho,\mu)\right\}
\\&=\inf\left\{\int_{\Rn}\varphi \,d\varrho+\int_{\Rn}\varphi^{*} \,d\mu:\varphi\text{ is convex and lower semi-continuous}\right\},
\end{align*} where 
   $\varphi^*$ is the Legendre transform of $\varphi.$ If the measure $\mu$ has its barycenter at $o$, we can see that $\mathcal{T}(\varrho, \mu)$ is invariant regarding the translation of the measure $\varrho$, due to Proposition \ref{[San16,Prop3.1]}.   
Note that   $\mathcal{P}_1(\Rn)$ denotes the set of probability measures on $\Rn$ with finite first moment. By $\mathrm{Supp}(\mu)$ we mean the support of $\mu.$
  
\begin{proposition}\label{PropertyOfMinimizer}
Let $-\frac{1}{n}<\alpha<0$ and $\mu\in\mathcal{P}_1(\Rn)$. Suppose that the barycenter of $\mu$ is $o$, and $\mu$ is not supported in any hyperplane. Then, any minimizer $\varrho_0$ (assuming the existence) of following problem
\begin{equation} \label{P}
\inf\Big\{(1-\alpha)\mathcal{F}_{\alpha}(\varrho)-\alpha\mathcal{T}(\varrho,\mu):\varrho\in \mathcal{P}_{1}(\Rn)\Big\}
\end{equation}
satisfies that for some lower semi-continuous convex function $\varphi_0$,
\begin{align*}
    \varrho_0 = (1-\alpha\varphi_0)^{\frac{1}{\alpha}-1}\,dx + \varrho_{0}^{s},
\end{align*}
where $\varrho_{0}^{s}$ and $\mathcal{L}^{n}$ are mutually singular, and if $\varrho_{0}^{s}\neq 0$, then $\mathrm{Supp}(\varrho_{0}^{s})\subset\{\varphi_{0}= \frac{1}{\alpha}\}$. Furthermore, the function $\varphi_{0}$ satisfies
\begin{equation}\mathcal{T}(\varrho_{0},\mu)=\int_{\mathbb{R}^{n}}\varphi_{0}\, d\varrho_{0}+\int_{\mathbb{R}^{n}}\varphi_{0}^*\, \,d\mu. \label{T(Rho0,Mu)}\end{equation}
\end{proposition}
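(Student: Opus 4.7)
The plan is to follow the variational approach of Santambrogio \cite{San16} (and Huynh-Santambrogio \cite{HK21}), adapted to the $\alpha$-concave functional $F(\varrho) := (1-\alpha)\mathcal{F}_\alpha(\varrho) - \alpha\mathcal{T}(\varrho,\mu)$. Both components are convex in $\varrho$ since $1-\alpha > 0$ and $-\alpha > 0$. I would split the argument into three steps: first obtain the optimal potential $\varphi_0$ from Knott-Smith, then identify the absolutely continuous density $\rho_0$ via a probability-preserving perturbation, and finally locate the support of the singular part $\varrho_0^s$ by a separate variational argument.

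For the first step, minimality gives $F(\varrho_0) < +\infty$, which combined with Proposition \ref{[San16,Prop3.2]} and the finiteness of the first moment of $\varrho_0$ forces $\mathcal{T}(\varrho_0, \mu) < +\infty$. Theorem \ref{[Vil08, Theorem 5.10]} then produces a convex lower semi-continuous $\varphi_0$ satisfying \eqref{T(Rho0,Mu)}, and Radon-Nikodym yields the decomposition $\varrho_0 = \rho_0\, dx + \varrho_0^s$. For the second step, I would test with a smooth, compactly supported $h:\Rn\to\R$ satisfying $\int h\, dx = 0$ and $\mathrm{Supp}(h)\subset\{\rho_0 > 0\}$ on which $\rho_0$ is bounded above and below away from zero. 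Setting $\varrho_\epsilon := \varrho_0 + \epsilon h\, dx$, the dual formula \eqref{def-T-2} combined with \eqref{T(Rho0,Mu)} yields the one-sided bound $\mathcal{T}(\varrho_\epsilon, \mu) - \mathcal{T}(\varrho_0, \mu) \leq \epsilon\int_{\Rn}\varphi_0 h\, dx$, while a Taylor expansion of the concave map $\rho\mapsto\rho^{1/(1-\alpha)}$ gives $(1-\alpha)[\mathcal{F}_\alpha(\varrho_\epsilon) - \mathcal{F}_\alpha(\varrho_0)] = -\epsilon\int_{\Rn}\rho_0^{\alpha/(1-\alpha)}h\, dx + o(\epsilon)$. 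Minimality and the arbitrary sign of $\epsilon$ force the Euler-Lagrange identity
$$\rho_0^{\alpha/(1-\alpha)} + \alpha\varphi_0 = c \quad\text{a.e.\ on}\ \{\rho_0 > 0\}$$
for some constant $c$. Since shifting $\varphi_0$ by an additive constant preserves \eqref{T(Rho0,Mu)} (the shifts of $\varphi_0$ and $\varphi_0^*$ cancel because both marginals are probability measures), I replace $\varphi_0$ by $\varphi_0 + (1-c)/\alpha$ to normalize $c = 1$, obtaining $\rho_0 = (1-\alpha\varphi_0)^{(1-\alpha)/\alpha} = (1-\alpha\varphi_0)^{1/\alpha-1}$.

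For the third step, suppose for contradiction that $\varrho_0^s(\{\varphi_0 > 1/\alpha\}) > 0$ and pick a point $x_0$ in this set at which $\varphi_0$ is continuous (which holds $\varrho_0^s$-almost everywhere, since a convex function is continuous on the interior of its domain). For a small compact set $A\ni x_0$ with $\varrho_0^s(A) = m > 0$ and a ball $B_r = B_r(x_0)$ of small radius $r$, consider the probability-preserving competitor
$$\varrho_\epsilon := \varrho_0 - \epsilon\varrho_0^s|_A + \epsilon m|B_r|^{-1}\mathbf{1}_{B_r}\mathcal{L}^n.$$
In the case $\rho_0 \equiv 0$ on $B_r$, the $(1-\alpha)\mathcal{F}_\alpha$ contribution scales as $-\epsilon^{1/(1-\alpha)}$ and dominates the $O(\epsilon)$ change in $\mathcal{T}$ since $1/(1-\alpha) \in (0,1)$. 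In the complementary case $\rho_0 > 0$ on $B_r$, substituting the Euler-Lagrange identity $\rho_0^{\alpha/(1-\alpha)} = 1-\alpha\varphi_0$ into a first-order expansion yields $F(\varrho_\epsilon) - F(\varrho_0) \leq -\epsilon m(1-\alpha\varphi_0(x_0)) + o(\epsilon)$, strictly negative because $1-\alpha\varphi_0(x_0) > 0$ (from $\alpha < 0$ and $\varphi_0(x_0) > 1/\alpha$). Either sub-case produces $F(\varrho_\epsilon) < F(\varrho_0)$ for small $\epsilon > 0$, contradicting minimality, and therefore $\mathrm{Supp}(\varrho_0^s)\subset\{\varphi_0 = 1/\alpha\}$.

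The main obstacle will be the third step: one must carefully balance the modulus-of-continuity error $O(m\omega_{\varphi_0}(r))$ against the leading linear term by sending $r\to 0$ first and only then $\epsilon\to 0$, verify the distinct scalings $\epsilon^{1/(1-\alpha)}$ versus $\epsilon$ in the two sub-cases, and justify the dual bound on $\mathcal{T}$ for a perturbation involving a non-absolutely-continuous measure. Steps one and two are comparatively routine applications of Knott-Smith and first-variation calculus.
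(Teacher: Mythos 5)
Your overall strategy (Knott--Smith potential, linear upper bound on $\mathcal{T}$ via the dual formulation \eqref{def-T-2}, first variation of $\mathcal{F}_{\alpha}$) is the same as the paper's, but there is a genuine gap at the heart of Step 2: your Euler--Lagrange identity is established only $\mathcal{L}^{n}$-a.e.\ on $\{\rho_{0}>0\}$, whereas the proposition asserts an identity of measures, $\varrho_{0}=(1-\alpha\varphi_{0})^{\frac{1}{\alpha}-1}\,dx+\varrho_{0}^{s}$, whose density is strictly positive wherever $\varphi_{0}<+\infty$. You therefore still have to prove that $\mathcal{L}^{n}\big(\{\varphi_{0}<+\infty\}\setminus\{\rho_{0}>0\}\big)=0$ (and, before that, that the absolutely continuous part is not zero, so that your perturbations are admissible at all). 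Neither point is automatic: the paper first shows the infimum in \eqref{P} is strictly negative (testing with normalized indicators of balls) to rule out $\varrho_{0}=\varrho_{0}^{s}$, and then runs a separate perturbation, moving a small amount $\varepsilon$ of a.c.\ mass from a set where $\rho_{0}\geq c>0$ onto a positive-measure piece of $\{\varphi_{0}<\infty\}\cap\{\rho_{0}=0\}$ near a suitably chosen point; the gain in $(1-\alpha)\mathcal{F}_{\alpha}$ is of order $\varepsilon^{\frac{1}{1-\alpha}}$ and beats the $O(\varepsilon)$ change of the linear term. This is exactly the $\varepsilon^{1/(1-\alpha)}$-versus-$\varepsilon$ scaling you invoke, but you deploy it only for the singular part. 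Without it on the a.c.\ side, your normalization does not yield the claimed representation, nor the inequality $\varphi_{0}\geq\frac{1}{\alpha}$ that Step 3 tacitly uses: you only exclude singular mass on $\{\varphi_{0}>\frac{1}{\alpha}\}$, so without $\varphi_{0}\geq\frac{1}{\alpha}$ you do not reach $\mathrm{Supp}(\varrho_{0}^{s})\subset\{\varphi_{0}=\frac{1}{\alpha}\}$; in the paper this corresponds to proving $c_{0}\leq 0$ after normalizing $\inf\varphi_{1}=0$.

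Two smaller problems. First, a smooth compactly supported $h$ with $\int h\,dx=0$ and $\mathrm{Supp}(h)\subset\{\rho_{0}>0\}$ on which $\rho_{0}$ is bounded below need not exist: $\{\rho_{0}>0\}$ is merely measurable, and $\rho_{0}$ need not be essentially bounded away from $0$ on any open set. One should perturb with indicators of measurable sets $S\subset\{\rho_{0}>c\}$, as the paper does; this also gives constancy of $\rho_{0}^{\alpha/(1-\alpha)}+\alpha\varphi_{0}$ a.e.\ on all of $\{\rho_{0}>0\}$, not only on favorable open subsets. Second, in Step 3 the selection of a point of continuity of $\varphi_{0}$ for $\varrho_{0}^{s}$-a.e.\ $x_{0}$ is unjustified: the singular part may be carried by $\partial(\mathrm{dom}\,\varphi_{0})$, where a convex lower semi-continuous function can be discontinuous and where your ball $B_{r}$ may leave $\mathrm{dom}\,\varphi_{0}$, making $\int_{B_{r}}\varphi_{0}\,dx=+\infty$ and destroying the dual upper bound on $\mathcal{T}$. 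The paper avoids this with a global comparison: if the singular mass does not sit on $\{\varphi_{0}=\frac{1}{\alpha}\}$, replace it either by Dirac masses at near-minimizers of the potential (case $c_{0}=0$) or by the renormalized a.c.\ part (case $c_{0}<0$), which strictly decreases the linearized functional. Incidentally, the obstacle you flag about the dual bound for non-absolutely-continuous perturbations is not an issue: $\mathcal{T}(\varrho,\mu)\leq\int_{\Rn}\varphi_{0}\,d\varrho+\int_{\Rn}\varphi_{0}^{*}\,d\mu$ holds for every probability measure $\varrho$ by \eqref{def-T-2}, with equality at $\varrho_{0}$ by \eqref{T(Rho0,Mu)}.
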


\begin{proof} 
Suppose that $\varrho_{0}=\rho_{0}\,dx+\varrho^{s}_{0}$ is a minimizer of \eqref{P}.  Let $\varrho_{\tau}:=\tau^{-n}\omega_{n}^{-1}\mathbf{1}_{\{|x|\leq \tau\}}\,dx\in\mathcal{P}_{1}(\Rn)$. Then
\begin{align}&(1-\alpha)\mathcal{F}_{\alpha}(\varrho_{\tau})-\alpha\mathcal{T}(\varrho_{\tau},\mu)\nonumber \\&=-(1-\alpha)\!\int_{\mathbb{R}^{n}}\!\left( \tau^{-n}\omega_{n}^{-1}\mathbf{1}_{\{|x|\leq \tau\}}\right)^{\frac{1}{1-\alpha}}\,dx-\alpha\sup\left\{\int_{\mathbb{R}^{n}\times \mathbb{R}^{n}}\langle x,  y\rangle\,d\pi(x, y):\pi\in\Pi(\varrho_{\tau},\mu)\right\}\nonumber\\&\leq -(1-\alpha)\omega_{n}^{\frac{-\alpha}{1-\alpha}}\tau^{\frac{-n\alpha}{1-\alpha}}-\alpha\tau\int_{\mathbb{R}^{n}}|y|\,d\mu(y).\label{NonzeroAbsContPart}\end{align}
Since $-\frac{1}{n}<\alpha<0$, we have $\frac{-\alpha n}{1-\alpha}<1$. Hence, if we choose $\tau$ such that 
$$0<\tau ^{1+\frac{\alpha n}{1-\alpha}}< \frac{(1-\alpha)\omega_{n}^{\frac{-\alpha}{1-\alpha}}}{-\alpha  \int_{\mathbb{R}^{n}}|y|\,d\mu(y)},$$
then, from \eqref{NonzeroAbsContPart} and the minimizing property of $\varrho_{0}$, we obtain
\begin{align}(1-\alpha)\mathcal{F}_{\alpha}(\varrho_{0})-\alpha\mathcal{T}(\varrho_{0},\mu)\leq (1-\alpha)\mathcal{F}_{\alpha}(\varrho_{\tau})-\alpha\mathcal{T}(\varrho_{\tau},\mu) < 0.\label{NegativityOfMinimum}\end{align}

Let $\varphi _1 $ be a convex lower semi-continuous function that is an optimal function of $\mathcal{T}(\varrho_{0},\mu)$. Namely, $\varphi_{1}$ (and hence $\varphi_{1}+c_{0}$ for any constant $c_{0}$) must satisfy
    \begin{equation}\mathcal{T}(\varrho_{0},\mu) = \int_{\Rn}\varphi_1  \,d\varrho_{0}+\int_{\Rn}\varphi_1 ^{*}\,d\mu.\label{T(Rho0,Mu)2}\end{equation}
Without loss of generality, we assume   $\inf\varphi_1 =0$. Because $\mu$ is fixed, $\varrho_{0}=\rho_0\,dx+\varrho_0^s$ is a minimizer of the following functional defined on $\mathcal{P}_{1}(\Rn)$:
$$G:\varrho\mapsto  (1-\alpha)\mathcal{F}_{\alpha}(\varrho) -\alpha\int_{\Rn}\varphi_1  d{\varrho}.$$

We now prove   $\varrho_{0} \neq\varrho_{0}^{s}$ by contradiction.  Assume that $\varrho_{0}=\varrho_{0}^{s}$. Then,  
\begin{align}\label{G-rho>0}G(\varrho_0) = -\alpha\int_{\mathbb{R}^{n}}\varphi_1 \,d\varrho_0^{s} \geq 0 .\end{align}
On the other hand, let $x_{n}$ be a sequence such that $\varphi_1 (x_{n})\to 0$ as $n\to\infty$. Then by \eqref{NegativityOfMinimum}, one has
\begin{align*}G(\varrho_{0}) &= (1-\alpha) \mathcal{F}_\alpha(\varrho_{0})-\alpha \mathcal{T}(\varrho_{0}, \mu) +\alpha\int_{\mathbb{R}^{n}}\varphi_1^{*} \,d\mu\\
&<\alpha\int_{\mathbb{R}^{n}} \varphi_1^{*}\,d\mu\leq \alpha \int (\chi_{\{x_{n}\}} +\varphi_1(x_{n}))^{*}\,d\mu,\end{align*} where 
$\chi_E(x)=0$ if $x\in E$ and $\chi_E(x)=\infty$ if $x\notin E$. It can be checked by \eqref{legendre-tran} that $$(\chi_{\{x_{n}\}} +\varphi_1 (x_{n}))^{*}(x)=\langle x, x_n\rangle-\varphi_1 (x_n).$$ This further gives 
\begin{align*}  G(\varrho_{0})  <\alpha\int_{\mathbb{R}^{n}} \varphi_1^{*}\,d\mu  \leq \lim_{n\rightarrow \infty}   \alpha\int \left\langle x,x_{n}\right\rangle \,d\mu -\lim_{n\rightarrow \infty}  \alpha\varphi_1 (x_{n})
=0,
\end{align*} where we have used the fact that $\mu$ has its barycenter at $o$ and $\varphi_1(x_n)\rightarrow 0. $
This is a contradiction to \eqref{G-rho>0}, and hence   $\varrho_0\neq \varrho_0^s$. That is, $\rho_0\,dx$ is not a zero measure and $\rho_0$ is nonzero on a set of positive $\mathcal{L}^n $ measure. Hence, $\mathrm{Supp}(\rho_0)$ is nonempty and has positive $\mathcal{L}^n $ measure.

Next we prove that  $\mathcal{L}^{n}(\{ \varphi_1  < \infty\}\setminus\{\rho_{0}>0\})=0$. Let $A:= \{\varphi_1 < \infty\}\setminus\{\rho_{0}>0\}$ and suppose, for the sake of contradiction, that $\mathcal{L}^{n}(A)>0$. Then there exists a point $x_{0}\in A$ such that for any ball $B$ containing $x_{0}$, we have $\mathcal{L}^{n}(B\cap A)>0$. Indeed, suppose the contrary, that for every $x\in A$, there exists a ball $B_{x}$ containing $x$ such that $\mathcal{L}^{n}(B_{x}\cap A) = 0$. Since the countable set $\mathcal{B}=\{B(q,r):q\in \mathbb{Q}^{n},r\in \mathbb{Q},r>0\}$, where $B(q,r)=q+rB^n_2$, forms a basis for the Euclidean topology on $\mathbb{R}^{n}$, there exists $B(q_{x},r_{x})\in\mathcal{B}$ such that $x\in B(q_{x},r_{x})\subset B_{x}$ for each $x\in A$. Because the collection $\{B(q_{x},r_{x}):x\in A\} \subset \mathcal{B}$ is countable, there exists a countable subcollection $\{B_{x_{i}}\}_{i=1}^{\infty}$ of $\{B_{x}:x\in A\}$ that covers $A$. Then
$$\mathcal{L}^{n}(A)\leq \mathcal{L}^{n}\Big(\bigcup_{i=1}^{\infty}(B_{x_{i}}\cap A)\Big)\leq \sum_{i=1}^{\infty}\mathcal{L}^{n}(B_{x_{i}}\cap A)=0,$$
which is a contradiction to the assumption on $A$. 

With the help of the above argument, we can now  find a contradiction to the assumption that $\mathcal{L}^{n}(A)>0$ by transporting a little mass of $\rho_{0}\,dx$ to a neighborhood of $x_{0}$. To do so, let $S_{1}\subset \{\rho_{0}>0\}$  be a set of positive $(\rho_{0}\,dx)$-measure, and hence positive $\mathcal{L}^{n}$-measure (because $\rho_{0}\,dx\ll \mathcal{L}^{n}$), such that,  $0<\inf_{S_{1}}\rho_{0} = c$. Let $\varepsilon<c$. By shrinking $S_{1}$ if necessary, we can choose $S_{2}\subset A$ containing $x_{0}$ such that $\mathcal{L}^{n}(S_{2}) = \mathcal{L}^{n}(S_{1})$. Consider $\varrho_{1}\in \mathcal{P}_{1}(\Rn)$ such that $\varrho_{1}^{s} = \varrho_{0}^{s}$ and
$$
\rho_{1}  = \left\{ \begin{array}{cl} \rho _{0}-\varepsilon, &\ \ \text{on }S_{1},\\\varepsilon,&\ \ \text{on } S_{2},\\ \rho_{0},  &\ \ \text{otherwise}.\end{array}\right.
$$
We compare $G(\varrho_{1})$ with $G(\varrho_{0})$ for small $\varepsilon$. Shrinking $S_{1}$ again, we can assume that $S_{2}$ is a subset of $\{x\in A:|\varphi_1(x)-\varphi_1(x_{0})|<\delta\}$ for some small number $\delta$. Then
\begin{equation}\int_{\Rn} \varphi_1 \,d\varrho_{1} < \int_{\Rn} \varphi _1\,d\varrho_{0}+\varepsilon\int_{S_{2}}\varphi_1\,dx \leq \int_{\Rn} \varphi _1 \,d\varrho_{0}+\varepsilon\mathcal{L}^{n}(S_{2})(\varphi_1(x_{0})+\delta).\label{BoundOfIntOfPhiRho1}\end{equation}
Moreover, from the concavity of $t^{k}$ with $0<k<1$, we obtain 
\begin{align}\mathcal{F}_{\alpha}(\varrho_{1})-\mathcal{F}_{\alpha}(\varrho_{0})&= \int_{\Rn}\rho_{0}^{\frac{1}{1-\alpha}}(x)\,dx-\int _{\Rn} \rho_{1}^{\frac{1}{1-\alpha}}(x)\,dx \nonumber
\\&= \int_{S_{1}}\rho_{0}^{\frac{1}{1-\alpha}}(x)\,dx-\int_{S_{1}}\big( \rho_{0}(x)-\varepsilon\big)^{\frac{1}{1-\alpha}}\,dx-\int_{S_{2}}  \varepsilon ^{\frac{1}{1-\alpha}}\,dx \nonumber
\\&\leq\frac{1}{1-\alpha}\int_{S_{1}}(\rho_{0}(x)-\varepsilon)^{\frac{\alpha}{1-\alpha}}\varepsilon \,dx-\mathcal{L}^{n}(S_{2})\varepsilon^{\frac{1}{1-\alpha}}\nonumber
\\&\leq \frac{1}{1-\alpha}(c-\varepsilon)^{\frac{\alpha}{1-\alpha}}\mathcal{L}^{n}(S_{1})\varepsilon-\mathcal{L}^{n}(S_{1})\varepsilon^{\frac{1}{1-\alpha}}\nonumber
\\&\leq \bigg( \frac{(c-\varepsilon_0)^{\frac{\alpha}{1-\alpha}}}{1-\alpha}\varepsilon_0^{\frac{-\alpha}{1-\alpha}}-1\bigg)\mathcal{L}^{n}(S_{1})\varepsilon^{\frac{1}{1-\alpha}}, \label{BoundOfF(Rho1)}
\end{align} for any $0<\varepsilon<\varepsilon_0$, where   $0<\varepsilon_{0}<c$ is small enough so that  
 \begin{align*}  \frac{(c-\varepsilon_0)^{\frac{\alpha}{1-\alpha}}}{1-\alpha}\varepsilon_0^{\frac{-\alpha}{1-\alpha}}-1 <0. 
\end{align*} 
Together with \eqref{BoundOfIntOfPhiRho1} and \eqref{BoundOfF(Rho1)}, we have
\begin{align*}
    G(\varrho_{1})-G(\varrho_{0}) &< (1-\alpha) \bigg( \frac{(c-\varepsilon _0)^{\frac{\alpha}{1-\alpha}}}{1-\alpha}\varepsilon_0 ^{\frac{-\alpha}{1-\alpha}}-1\bigg)\mathcal{L}^{n}(S_{1})\varepsilon ^{\frac{1}{1-\alpha}} -\alpha\mathcal{L}^{n}(S_{1})(\varphi_{1}(x_{0})+\delta)\varepsilon , 
\end{align*}  which is negative if $0<\varepsilon<\varepsilon_0$ satisfies
$$
\varepsilon^{\frac{-\alpha}{1-\alpha}}<\frac{(1-\alpha)\big(1-\frac{(c-\varepsilon_0)^{\frac{\alpha}{1-\alpha}}}{1-\alpha}\varepsilon_0 ^{\frac{-\alpha}{1-\alpha}}\big)  }{-\alpha \left(\varphi_{1}(x_0\right)+\delta)}.
$$ Hence, we obtain a contradiction to the fact that $\varrho_{0}$ minimizes $G$. Therefore, for $\mathcal{L}^{n}$-almost every $x$, if $\varphi_1(x) < \infty$, then $\rho _{0}(x)>0$.

We now claim that $\varrho_{0}$ is also a minimizer of a linearized functional of $G$ on a subset of $\mathcal{P}_{1}(\mathbb{R}^{n})$. To this end, using the fact that $\varrho_{0}$ is a minimizer of $G$, we have that for any $0<t<1$, there exists $0<t_{\rho}<t$ such that
\begin{align}\label{GateauxDerivative}  0&\leq \frac{G(\varrho_{0}+t(\varrho-\varrho_{0}))-G(\varrho_{0})}{t}\nonumber \\
    &=(\alpha-1)  \int_{\Rn}\frac{\left(\rho_{0}+t(\rho-\rho_{0})\right)^{\frac{1}{1-\alpha}}- \rho_{0} ^{\frac{1}{1-\alpha}}}{t}\,dx -\alpha\int_{\Rn}\varphi_{1} d(\varrho-\varrho_{0})\nonumber  \\
    &=-\int_{\Rn}(\rho_{0}+t_{\rho} (\rho-\rho_{0}))^{\frac{\alpha}{1-\alpha}}(\rho-\rho_{0})\,dx-\alpha\int_{\Rn}\varphi_{1} d(\varrho-\varrho_{0}),
    \end{align}    
where the last equality follows from the mean value theorem. Notice that 
    $$
(\rho_{0}+t_{\rho}(\rho-\rho_{0}))^{\frac{\alpha}{1-\alpha}}|\rho-\rho_{0}|\leq \big(\min\{\rho,\rho_{0}\}\big)^{\frac{\alpha}{1-\alpha}}|\rho-\rho_{0}|.
    $$
Assume that $\varrho\in \mathcal{S}$, where $\mathcal{S}$ is given by 
$$\mathcal{S}:= \left\{\varrho\in\mathcal{P}_{1}(\mathbb{R}^{n}):\big(\min\{\rho,\rho_{0}\}\big)^{\frac{\alpha}{1-\alpha}}\left |\rho-\rho_{0}\right| \in L^{1}(\mathbb{R}^{n})\right\}.$$
  The set $\mathcal{S}$ is nonempty as $\varrho_{0}\in\mathcal{S}$. Let $\phi:=\rho-\rho_{0}$. By the dominated convergence theorem,
\begin{equation}\label{DCT}
    \lim_{t\to 0^{+}}\int_{\Rn}(\rho_{0}+t\phi)^{\frac{\alpha}{1-\alpha}}\phi\,dx = \int_{\Rn}\lim_{t\to 0^{+}}(\rho_{0}+t\phi)^{\frac{\alpha}{1-\alpha}}\phi\,dx = \int_{\Rn} \rho_{0} ^{\frac{\alpha}{1-\alpha}}\phi\,dx.
    \end{equation}
    Combining (\ref{GateauxDerivative}) and (\ref{DCT}), we find that $\varrho_{0}$ is a minimizer on $\mathcal{S}$ of the functional
  \begin{align*}
     L:\varrho & \mapsto -\int_{\Rn}\rho\rho_{0}^{\frac{\alpha}{1-\alpha}}\,dx-\alpha\int_{\Rn}\varphi_{1} \,d\varrho\\ 
    &=\int_{\mathbb{R}^{n}}\rho\left( -\rho_{0}^{\frac{\alpha}{1-\alpha}}-\alpha\varphi_{1}\right)dx-\alpha\int_{\mathbb{R}^{n}}\varphi_{1} \,d\varrho^{s}.
  \end{align*}

Next we prove that $\rho_{0}\,dx$ must be concentrated on the set $\{-\rho_{0}^{\frac{\alpha}{1-\alpha}}-\alpha\varphi_1 =c_{0}\}$, where $c_{0} \leq 0$ is a constant. Recall that $\mathrm{ess\,sup}f = \inf\{c:f(x)\leq c\text{ for $\mathcal{L}^{n}$-a.e. }x\}$ and $\mathrm{ess\,inf}f = -\mathrm{ess\,sup}(-f)$. Suppose, for the sake of contradiction, that 
    $$
   c_{1}:=\mathrm{ess\,inf}_{\{\rho_{0}>0\}}\Big\{-\rho_{0}^{\frac{\alpha}{1-\alpha}}-\alpha\varphi_{1}\Big\}<\mathrm{ess\,sup}_{\{\rho_{0}>0\}}\Big\{-\rho_{0}^{\frac{\alpha}{1-\alpha}}-\alpha\varphi_{1}\Big\}:= c_{2},$$
where $c_1$ and $c_2$ could be infinite. We can choose a constant $c_3\in (c_1, c_2)$ and two sets 
    $$
    S_{3}\subset \left\{x:c_{1}\leq -\rho_{0}^{\frac{\alpha}{1-\alpha}}(x)-\alpha\varphi_{1}(x)\leq c_3 \right\} \ \ \mathrm{and} \ \
    S_{4}\subset \left\{x:c_3< -\rho_{0}^{\frac{\alpha}{1-\alpha}}(x)-\alpha\varphi_{1}(x)\leq c_{2}\right\}
    $$
    such that $\inf_{S_{4}}\rho_{0} = c'>0$, $\inf_{S_{3}} \rho_{0}>0$, and 
    $$
    0< \mathcal{L}^{n}(S_{3})=\mathcal{L}^{n}(S_{4})<+\infty.
    $$

   Consider $\varrho = \rho\,dx + \varrho^{s}\in\mathcal{P}_{1}(\mathbb{R}^{n})$ such that $\varrho^{s} = \varrho_{0}^{s}$ and
    $$
    \rho=\begin{cases}\rho_{0}+\frac{c'}{2}&\text{on }S_{3},\\\rho_{0}-\frac{c'}{2} &\text{on }S_{4},\\\rho_{0} &\text{otherwise}.\end{cases}
    $$
 It follows that 
    \begin{align*}\int_{\mathbb{R}^{n}} \min\{\rho,\rho_{0}\}^{\frac{\alpha}{1-\alpha}}\left |\rho-\rho_{0}\right|dx&=\frac{c'}{2}\int_{S_{3}}\rho_{0}^{\frac{\alpha}{1-\alpha}}dx+\frac{c'}{2}\int_{S_{4}}\left( \rho_{0}-\frac{c'}{2}\right)^{\frac{\alpha}{1-\alpha}}dx
    \\&\leq \frac{c'}{2}\Big( \inf_{S_{3}}\rho_{0}\Big)^{\frac{\alpha}{1-\alpha}}\mathcal{L}^{n}(S_{3})+\frac{c'}{2}\left( \frac{c'}{2}\right)^{\frac{\alpha}{1-\alpha}}\mathcal{L}^{n}(S_{4})
    \\&<+\infty.\end{align*}
That is $\varrho\in \mathcal{S}$. On the other hand, 
    \begin{align*}L(\varrho)&= L(\varrho_{0})+\frac{c'}{2}\int_{S_{3}}(-\rho_{0}^{\frac{\alpha}{1-\alpha}}-\alpha\varphi_{1})\,dx-\frac{c'}{2}\int_{S_{4}}(-\rho_{0}^{\frac{\alpha}{1-\alpha}}-\alpha\varphi_{1})\,dx<L(\varrho_{0}),\end{align*}
     which is a contradiction to the fact that $\varrho_{0}$ is a minimizer of $L$ on $\mathcal{S}$. Hence, $\rho_{0}$ must be concentrated on the set $$\{-\rho_{0}^{\frac{\alpha}{1-\alpha}}-\alpha\varphi_1 =c_{0}\}$$ for some constant $c_{0}$. This means for $\mathcal{L}^{n}$-almost every $x\in \{\rho_{0}>0\}$, we have $-\rho_{0}(x)^{\frac{\alpha}{1-\alpha}}(x)-\alpha\varphi_{1}(x) = c_{0}$. As claimed above, $\mathcal{L}^{n}(\{\varphi_{1}<\infty\}\setminus \{\rho_{0}>0\}) = 0$, so by redefining $\rho_{0}$ on a Lebesgue null set, we may assume that, if $\varphi_{1}(x)<\infty$, then $\rho_{0}(x)>0$ and
     $$-\rho_{0}^{\frac{\alpha}{1-\alpha}}(x)-\alpha\varphi_{1}(x) = c_{0}.$$
     
     We now claim that  $c_{0}\leq 0$. To this end, take  the sequence $x_{n}$ such that $\varphi_{1}(x_{n})\to 0=\inf \varphi_{1}$, and then 
    $$
    c_{0}=\lim_{n\to\infty}(-\rho_{0}(x_{n})^{\frac{\alpha}{1-\alpha}}-\alpha\varphi_{1}(x_{n}) ) = - \lim _{n\to\infty}\rho_0\left(x_n\right)^{\frac{\alpha}{1-\alpha}}\leq 0.
    $$     
 From the fact that $\rho_{0}\,dx$ is concentrated on the set $\{-\rho_{0}^{\frac{\alpha}{1-\alpha}}-\alpha\varphi_{1} =c_{0}\}$, on the set $\{\varphi_{1}<\infty\}$, we necessarily have
  $$
    \rho_{0} = \Bigg( 1-\alpha\bigg( \varphi_{1}+\frac{c_{0}+1}{\alpha}\bigg)\Bigg)^{\frac{1-\alpha}{\alpha}}:=  \big( 1-\alpha \varphi_0 \big)^{\frac{1-\alpha}{\alpha}}.
    $$
As $c_0\leq 0$ and $
-\frac{1}{n}<\alpha<0$, we have $$\varphi_0 =\varphi_{1}+\frac{c_{0}+1}{\alpha}\geq \frac{1}{\alpha}.$$ In the set $\{\varphi_{1}=\infty\}$, we have $\rho_{0}=0$ almost everywhere with respect to $\rho_{0}\,dx$,  as otherwise $G(\varrho_{0}) = \infty$. Clearly, $\varphi_{0}$ satisfies \eqref{T(Rho0,Mu)}, due to \eqref{T(Rho0,Mu)2}.

Finally, we prove that $\operatorname{Supp}\left(\varrho_0^s\right) \subset\left\{\varphi_0=\frac{1}{\alpha}\right\}$. Assume $\varrho_{0}^{s}\neq 0$. If $c_{0}=0$ and $\mathrm{Supp}(\varrho_{0}^s)$ is not a subset of $\{\varphi_{1}=0\}=\{\varphi_{0}=\frac{1}{\alpha}\}$, then
$$\int \varphi_{1} \,d\varrho_0^{s} > 0 = \lim_{n\to\infty}\int \varphi_{1} \,d\delta_{x_{n}},$$ where $x_n$ is such that $\varphi_1(x_n)\to0$, a contradiction to the minimality of  $\varrho_{0}$ for $G$. If $c_{0}<0$, then $$L\bigg( \frac{\rho_{0}dx}{\int \rho_{0}dx}\bigg)< L(\varrho_{0}),$$ which is a contradiction to the minimality of $\varrho_{0}$ for $L$ on $\mathcal{S}$. The conclusion follows.
\end{proof}

The next result in this section concerns the existence of minimizers to problem (\ref{P}). To establish such a result, we need the following lemma.
\begin{lemma}\label{LowerBoundOfFa}
    Let $-\frac{1}{n}<\alpha<0$. Then, there exist constants $C$, $\alpha_{1}\in (0,-\alpha n)$, and $\alpha_{2}\in (-\alpha n,1)$, such that, for any $\varrho\in \mathcal{P}_{1}(\Rn)$, one has
    \begin{equation*}
        \mathcal{F}_{\alpha}(\varrho)\geq C-\left(\int_{\mathbb{R}^n}|x| \,d \varrho(x)\right)^{\alpha_1} -\left(\int_{\mathbb{R}^n}|x| \,d \varrho(x)\right)^{\alpha_2}.
    \end{equation*}
\end{lemma}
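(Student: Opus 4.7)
The plan is to upper-bound $\int_{\Rn}\rho^{1/(1-\alpha)}\,dx = -\mathcal{F}_\alpha(\varrho)$ in terms of $M := \int_{\Rn}|x|\,d\varrho(x)$. Write $p := 1/(1-\alpha) \in (n/(n+1),1)$; the H\"older conjugate satisfies $p/(1-p) = -1/\alpha > n$, which is exactly where the hypothesis $\alpha > -1/n$ will enter. Since $\varrho \in \mathcal{P}_1(\Rn)$, the absolutely continuous part obeys $\int_{\Rn}\rho\,dx \leq 1$.

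For any $R>0$, I would split the integral over $B_R$ and $B_R^c$. On the ball, H\"older with exponents $1/p$ and $1/(1-p)$ together with $\int \rho \leq 1$ yields
\[
\int_{B_R}\rho^p\,dx \leq \Big(\int_{B_R}\rho\Big)^p(\omega_n R^n)^{1-p} \leq (\omega_n R^n)^{1-p}.
\]
On the complement, write $\rho^p = (\rho|x|)^p |x|^{-p}$ and apply H\"older to obtain
\[
\int_{B_R^c}\rho^p\,dx \leq \Big(\int_{\Rn}\rho|x|\,dx\Big)^p \Big(\int_{B_R^c}|x|^{-p/(1-p)}\,dx\Big)^{1-p} \leq c\, M^p R^{n(1-p)-p},
\]
the last integral being of order $R^{n-p/(1-p)}$ precisely because $p/(1-p)>n$. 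Summing,
\[
\int_{\Rn}\rho^p\,dx \leq c_1 R^{n(1-p)} + c_2 M^p R^{n(1-p)-p}.
\]

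Next I would extract a clean exponent. Taking $R=\max(1,M)$ makes the two terms comparable and produces $\int\rho^p\,dx \leq K_0(1+M^{n(1-p)})$ for every $M\geq 0$, with $n(1-p)=-n\alpha/(1-\alpha) \in (0,-n\alpha)$. To absorb $K_0$ into an enlarged exponent I would use the elementary estimate $K_0 t^\beta \leq K_0^{(\beta+\delta)/\delta} + t^{\beta+\delta}$, valid for any $\delta>0$ (split the cases $t \leq K_0^{1/\delta}$ and $t > K_0^{1/\delta}$). Choosing $\delta>0$ small enough that $\alpha_1 := n(1-p)+\delta$ remains strictly below $-n\alpha$, which is possible since $n(1-p)<-n\alpha$, one arrives at $\int_{\Rn}\rho^p\,dx \leq K + M^{\alpha_1}$ with $\alpha_1 \in (0,-n\alpha)$.

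Finally, fix any $\alpha_2 \in (-n\alpha,1)$. Because $M\geq 0$ and $\alpha_2>0$ force $M^{\alpha_2}\geq 0$, subtracting this nonpositive quantity preserves the inequality and gives
\[
\mathcal{F}_\alpha(\varrho) = -\int_{\Rn}\rho^p\,dx \geq -K - M^{\alpha_1} \geq -K - M^{\alpha_1} - M^{\alpha_2},
\]
so setting $C := -K$ yields the lemma. The main obstacle is simply the convergence of $\int_{B_R^c}|x|^{-p/(1-p)}\,dx$, which rests on $-1/\alpha > n$ and thus on the standing hypothesis $\alpha>-1/n$; once this is in place, the rest is H\"older bookkeeping plus absorbing the multiplicative constants into slightly enlarged exponents.
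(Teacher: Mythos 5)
Your argument is correct, and it reaches the lemma by a route different from the paper's. The paper's proof applies a pointwise Young's inequality built from the Legendre transform of $g(\tau)=-\tau^{\frac{1}{1-\alpha}}$, namely $-\rho(x)\left|x\right|^{\alpha_i}\leq -\rho(x)^{\frac{1}{1-\alpha}}+\beta\left|x\right|^{\frac{\alpha_i}{\alpha}}$, with a fixed splitting at $|x|=1$: the exponent $\alpha_1\in(0,-\alpha n)$ makes $|x|^{\alpha_1/\alpha}$ integrable near the origin, $\alpha_2\in(-\alpha n,1)$ makes $|x|^{\alpha_2/\alpha}$ integrable at infinity, and Jensen's inequality converts $\int|x|^{\alpha_i}\,d\varrho$ into $\big(\int|x|\,d\varrho\big)^{\alpha_i}$. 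You instead bound $\int\rho^{p}$ directly ($p=\frac{1}{1-\alpha}$) by H\"older on $B_R$ using $\int\rho\,dx\leq 1$, and on $B_R^c$ using the first moment together with the integrability of $|x|^{-p/(1-p)}$ (which, like the paper's tail estimate, is exactly where $-1/\alpha>n$ enters), then optimize $R=\max(1,M)$ and absorb the multiplicative constant by slightly enlarging the exponent. Both are conjugate-exponent arguments at heart, but your version is self-contained H\"older bookkeeping, yields the single exponent $\alpha_1$ anywhere in $\big(\tfrac{-n\alpha}{1-\alpha},-n\alpha\big)$, and shows the $M^{\alpha_2}$ term in the statement is actually redundant for you (you add it for free since $M^{\alpha_2}\geq 0$), whereas in the paper's proof both exponents play a genuine role (one per region). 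The paper's route has the small advantage of avoiding the $\varrho$-dependent radius and the constant-absorption step, since Jensen delivers the clean powers of $M$ immediately; yours gives a marginally sharper conclusion. All the estimates you use are valid for a general $\varrho=\rho\,dx+\varrho^s\in\mathcal{P}_1(\Rn)$, since $\int\rho\,dx\leq1$ and $\int\rho|x|\,dx\leq M$ hold regardless of the singular part, so there is no gap.
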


\begin{proof}
    Let $\varrho=\rho\,dx+\varrho^s\in \mathcal{P}_{1}(\Rn)$, and $g(\tau)=-\tau^{\frac{1}{1-\alpha}}$ for $\tau\in(0,+\infty)$.  The Legendre transform of $g$ is a function $g^*: (-\infty,0)\to \R$ given by 
    $$g^{*}(t) =\sup_{\tau\in (0, \infty) }
    \big(t \cdot \tau-g(\tau)\big)=-\alpha(1-\alpha)^{\frac{1}{\alpha}-1}(-t)^{\frac{1}{\alpha}},$$ for $t\in (-\infty, 0).$ Clearly, we have Young's inequality $$g^{*}(t)+g(\tau)\geq t\tau,\ \ \ \mathrm{for} \ \tau>0,t<0.$$
    Let $\beta:=-\alpha(1-\alpha)^{\frac{1}{\alpha}-1}>0$, $\alpha_{1}\in (0,-\alpha n)$, and $\alpha_{2}\in (-\alpha n,1)$. Then, \begin{align}\label{alpha1alpha2}\int_{\{|x|\leq 1\}} |x|^{\frac{\alpha_{1}}{\alpha}}\,dx<+\infty \ \ \mathrm{and}\ \ \int_{\{|x|\geq 1\}} |x|^{\frac{\alpha_{2}}{\alpha}}\,dx<+\infty.\end{align} We can apply the above Young's inequality to $\tau=\rho(x)$, $t=-|x|^{\alpha_{i}}$, $g(\tau)$ and $g^*(t)$, to obtain the following inequality:  
    \begin{equation}-\rho(x)\left |x\right|^{\alpha_{i}}\leq -\rho(x)^{\frac{1}{1-\alpha}}+\beta \left |x\right|^{\frac{\alpha_{i}}{\alpha}}.\label{Young'sIneq}\end{equation}  As $\alpha_1\in (0, -\alpha n)\subset (0,1)$, we can  apply Jensen’s inequality to get 
    \begin{align*}-\int_{\{|x|\leq 1\}}\rho^{\frac{1}{1-\alpha}}(x)\,dx &\geq -\beta \int_{\{|x|\leq 1\}} \left |x\right|^{\frac{\alpha_{1}}{\alpha}}dx-\int_{\{|x|\leq 1\}} \rho(x)\left |x\right|^{\alpha_{1}}dx
    \\
    &\geq C_{1}- \int_{\Rn}  \left |x\right|^{\alpha_{1}}d\varrho(x)\\
    &\geq C_{1}-\left( \int_{\Rn} |x|\,d\varrho(x)\right)^{\alpha_{1}}.\end{align*}
    Similarly, as $\alpha_2\in (0, 1)$, Jensen’s inequality implies 
    \begin{align*}-\int_{\{|x|\geq 1\}}\rho^{\frac{1}{1-\alpha}}(x)\,dx&\geq -\beta \int_{\{|x|\geq 1\}} \left |x\right|^{\frac{\alpha_{2}}{\alpha}}dx-\int_{\{|x|\geq 1\}} \rho(x)\left |x\right|^{\alpha_{2}}dx
    \\
    &\geq C_{2}-\int_{\Rn}  \left |x\right|^{\alpha_2}d\varrho(x)\\
    &\geq C_{2}-\left( \int_{\Rn}  \left |x\right|d\varrho(x)\right)^{\alpha_{2}}.\end{align*}
    It then follows that
    $$\mathcal{F}_{\alpha}(\varrho)\geq C_{1}+C_{2} -\left(\int_{\mathbb{R}^n}|x| \,d \varrho(x)\right)^{\alpha_1} -\left(\int_{\mathbb{R}^n}|x| \,d \varrho(x)\right)^{\alpha_2}.$$ This completes the proof. 
\end{proof}

 We need the following proposition as well. 
\begin{proposition}\label{Existence}
     Let $-\frac{1}{n}<\alpha<0$ and $\mu\in\mathcal{P}_1(\Rn)$. Suppose that the barycenter of $\mu$ is $o$, and $\mu$ is not supported in any hyperplane. Then there exists a solution to the problem (\ref{P}). 
\end{proposition}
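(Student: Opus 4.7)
The plan is to apply the direct method of the calculus of variations, combining coercivity in the first moment of $\varrho$ with weak lower semicontinuity of both $\mathcal{F}_{\alpha}$ and $\mathcal{T}(\cdot,\mu)$. I would first pick a minimizing sequence $\{\varrho_n\}\subset\mathcal{P}_1(\mathbb{R}^n)$ and, using translation invariance of $\mathcal{T}(\cdot,\mu)$ from Proposition \ref{[San16,Prop3.1]}(ii) (valid since $\mu$ has barycenter at the origin) together with the obvious translation invariance of $\mathcal{F}_\alpha$, normalize so that each $\varrho_n$ has barycenter at $o$. Setting $M_n := \int|x|\,d\varrho_n$, Proposition \ref{[San16,Prop3.2]} yields $\mathcal{T}(\varrho_n,\mu)\geq c_\mu M_n$, while Lemma \ref{LowerBoundOfFa} gives $\mathcal{F}_\alpha(\varrho_n)\geq C - M_n^{\alpha_1}-M_n^{\alpha_2}$ with $\alpha_1,\alpha_2\in(0,1)$. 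Since $1-\alpha>0$ and $-\alpha>0$,
\[
(1-\alpha)\mathcal{F}_\alpha(\varrho_n) - \alpha\mathcal{T}(\varrho_n,\mu)\ \geq\ (1-\alpha)\bigl(C - M_n^{\alpha_1} - M_n^{\alpha_2}\bigr) + (-\alpha c_\mu) M_n,
\]
whose right-hand side tends to $+\infty$ as $M_n\to\infty$. Hence the infimum in (\ref{P}) is finite and $\{M_n\}$ is bounded.

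Next I would invoke Markov's inequality to get tightness of $\{\varrho_n\}$, then apply Prokhorov's theorem to extract a subsequence (not relabeled) with $\varrho_n\rightharpoonup\bar\varrho$ weakly for some $\bar\varrho\in\mathcal{P}(\mathbb{R}^n)$; Fatou's lemma ensures $\bar\varrho\in\mathcal{P}_1(\mathbb{R}^n)$. Applying Proposition \ref{[San16,Prop3.1]}(iii) to $\{\varrho_n\}$ (all with barycenter at $o$) yields $\mathcal{T}(\bar\varrho,\mu)\leq\liminf_n\mathcal{T}(\varrho_n,\mu)$. For the $\mathcal{F}_\alpha$ piece, I would use that $\tau\mapsto -\tau^{1/(1-\alpha)}$ is convex on $[0,\infty)$ (since $-\tfrac{1}{n}<\alpha<0$ forces $1/(1-\alpha)\in(0,1)$) and has zero recession at infinity; the standard semicontinuity theorem for convex integrands with sublinear growth then gives $\mathcal{F}_\alpha(\bar\varrho)\leq\liminf_n\mathcal{F}_\alpha(\varrho_n)$. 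Multiplying these two semicontinuity estimates by the positive constants $1-\alpha$ and $-\alpha$ and adding, I conclude
\[
(1-\alpha)\mathcal{F}_\alpha(\bar\varrho) - \alpha\mathcal{T}(\bar\varrho,\mu)\ \leq\ \liminf_n\bigl[(1-\alpha)\mathcal{F}_\alpha(\varrho_n) - \alpha\mathcal{T}(\varrho_n,\mu)\bigr] = \inf\,(\ref{P}),
\]
so $\bar\varrho$ is a minimizer.

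The hard part will be the weak lower semicontinuity of $\mathcal{F}_\alpha$, because weak convergence of probability measures can generate a singular part in the limit, whereas $\mathcal{F}_\alpha$ only reads the absolutely continuous density. This is rescued by the fact that $-\tau^{1/(1-\alpha)}$ has zero recession at infinity precisely in the range $-\tfrac{1}{n}<\alpha<0$ assumed here, so mass concentrating or escaping cannot artificially lower the functional. A secondary bookkeeping concern is keeping the signs $1-\alpha>0$ and $-\alpha>0$ aligned, so that each semicontinuity inequality contributes in the correct direction to the combined functional; this is straightforward but needs explicit attention.
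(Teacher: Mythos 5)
Your overall skeleton coincides with the paper's: normalize a minimizing sequence to have barycenter at $o$ by translation invariance, combine Lemma \ref{LowerBoundOfFa} with Proposition \ref{[San16,Prop3.2]} to bound the first moments, get tightness and a narrowly convergent subsequence via Prokhorov, and use Proposition \ref{[San16,Prop3.1]}(iii) for the lower semicontinuity of $\mathcal{T}(\cdot,\mu)$ (your Fatou/portmanteau argument for the finite first moment of the limit is in fact a little cleaner than the paper's detour through the Knott--Smith potential). The gap is in the step you yourself flag as the hard one: the lower semicontinuity of $\mathcal{F}_\alpha$. The ``standard semicontinuity theorem for convex integrands'' you invoke is stated for nonnegative (or suitably minorized) integrands, or for local weak$^*$ convergence on bounded sets; it does not apply as stated to the nonpositive integrand $-\rho^{1/(1-\alpha)}$ under narrow convergence on all of $\Rn$. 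Indeed, without the uniform first-moment bound the claimed inequality $\liminf_k\mathcal{F}_\alpha(\varrho_k)\ge\mathcal{F}_\alpha(\varrho_0)$ is simply false: take $\varrho_k=(1-\epsilon_k)\,\mathbf{1}_{[0,1]^n}\,dx$ plus mass $\epsilon_k\to 0$ spread with tiny density over a region of volume $V_k$; then $\varrho_k$ converges narrowly to $\mathbf{1}_{[0,1]^n}\,dx$, while $\int\rho_k^{1/(1-\alpha)}\,dx\ge \epsilon_k^{p}V_k^{1-p}\to+\infty$ (with $p=\tfrac{1}{1-\alpha}<1$) for $V_k$ chosen large, so $\mathcal{F}_\alpha(\varrho_k)\to-\infty$. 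The danger is thus not concentration (which your zero-recession remark does handle) but \emph{spreading of vanishing mass over huge volumes}, and your attribution of the restriction $-\tfrac1n<\alpha<0$ to the recession function is off: the recession of $-\tau^{1/(1-\alpha)}$ vanishes for every $\alpha<0$, whereas the true role of $\alpha>-\tfrac1n$ is to force $p>\tfrac{n}{n+1}$, which, \emph{together with the uniform bound on $\int|x|\,d\varrho_k$ you established earlier}, is exactly what rules out the spreading scenario.

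The paper closes this gap by the decomposition \eqref{SplitF}: with the weight $g(x)=\mathbf{1}_{\{|x|\le1\}}|x|^{\alpha_1}+\mathbf{1}_{\{|x|>1\}}|x|^{\alpha_2}$, where $\alpha_1\in(0,-\alpha n)$ and $\alpha_2\in(-\alpha n,1)$ guarantee \eqref{alpha1alpha2}, Young's inequality \eqref{Young'sIneq} shows that $-\rho^{1/(1-\alpha)}+\beta g^{1/\alpha}+g\rho\ge 0$, so $\mathcal{F}_\alpha$ equals a functional $\mathcal{G}_\alpha$ with nonnegative integrand (lower semicontinuous by the Bouchitt\'e--Valadier representation, taking a supremum over compact sets) minus the constant $\beta\int g^{1/\alpha}\,dx$ minus $\int g\,d\varrho$; the last term is shown to pass to the limit along the minimizing sequence precisely because the first moments are uniformly bounded and $\alpha_2<1$. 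Your proposal needs either this argument or an equivalent quantitative estimate showing that, under the first-moment bound and $p>\tfrac{n}{n+1}$, the contribution of thin far-away mass to $\int\rho_k^{p}\,dx$ is uniformly negligible; as written, the appeal to a generic semicontinuity theorem does not deliver the key inequality.
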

\begin{proof} Recall that, if  $\mu$ has its barycenter at the origin $o$, we can see that $\mathcal{T}(\varrho, \mu)$, and  hence $$
(1-\alpha)\mathcal{F}_{\alpha}(\varrho)-\alpha\mathcal{T}(\varrho,\mu)$$ are invariant regarding the translation of the measure $\varrho$.  Without loss of generality, we can take  a  minimizing sequence $\varrho_{k}$ of (\ref{P}), such that, their  barycenters are at $o$. By Lemma \ref{LowerBoundOfFa}, each $\varrho_{k}$  satisfies that, for some fixed $\alpha_{1} \in (0, -\alpha n)$ and $\alpha_{2}\in (-\alpha n,1)$,  
    \begin{equation}\label{LowerBoundOfFAlongRk}
        \mathcal{F}_{\alpha}(\varrho_k)\geq C-\left(\int_{\mathbb{R}^n}|x| \,d \varrho_k(x)\right)^{\alpha_1} -\left(\int_{\mathbb{R}^n}|x| \,d \varrho_k(x)\right)^{\alpha_2}.
    \end{equation}
Due to Proposition \ref{[San16,Prop3.2]}, there exists a constant $\tau\geq 0$ such that $\mathcal{T}(\varrho,\mu)\geq \tau\int_{\Rn}\left |x\right|d\varrho(x)$ for any $\varrho\in \mathcal{P}_{1}(\mathbb{R}^{n})$ whose barycenter is at $o$.  Hence 
    \begin{align*}(1-\alpha )\left(C-\left( \int_{\Rn}\left |x\right|d\varrho_{k}(x)\right)^{\alpha_{1}}-\left( \int_{\Rn}\left |x\right|d\varrho_{k}(x)\right)^{\alpha_{2}} \right)-\alpha\tau\int_{\Rn}\left |x\right|d\varrho_{k}(x) \\ \leq (1-\alpha)\mathcal{F}_{\alpha}(\varrho_{k})-\alpha\mathcal{T}(\varrho_{k},\mu).\end{align*} Note that the sequence $(1-\alpha)\mathcal{F}_{\alpha}(\varrho_{k})-\alpha\mathcal{T}(\varrho_{k},\mu)$ is bounded from above (as $\varrho_k$ is a minimizing sequence to problem (\ref{P})). Hence we see that the sequence $$\int_{\Rn} |x|\,d\varrho_k(x)$$ must be bounded from above as well.  This gives the tightness of the sequence $\varrho_{k}$. Indeed, let $M := \sup_{k}\int_{\Rn} |x|\,d\varrho_{k}(x)<+\infty$. Then, for any  $\lambda >0$, we have
    $$\int_{\{|x|\geq \lambda\}}\,d\varrho_{k}(x)\leq \int_{\{|x|\geq \lambda\}} \frac{|x|}{\lambda}\,d\varrho_{k}(x) \leq  \frac{M}{\lambda}.$$
The tightness of the sequence $\varrho_{k}$ follows by letting $\lambda\to +\infty$. By Prokhorov's theorem (see \cite[Theorem 2.1.11]{FG21}), the sequence $\varrho_{k}$ admits a subsequence,   still denoted by $\varrho_k$ for convenience, that converges weakly to some $\varrho_{0}$, i.e.,
    $$\int_{\Rn}\phi \,d\varrho_{k}\to\int_{\Rn}\phi \,d\varrho_{0},$$  for any bounded continuous function $\phi$. 
    
We prove that $\varrho_{0}$ has finite first moment. By \eqref{LowerBoundOfFAlongRk} and the boundedness of $\int_{\mathbb{R}^n}|x| d \varrho_k(x)$, the sequence $\mathcal{F}_\alpha(\varrho_k)$ is bounded from below. Hence the sequence $\mathcal{T}(\varrho_k, \mu)$ is bounded from above. It follows from  Proposition \ref{[San16,Prop3.1]} that
\begin{equation}\mathcal{T}(\varrho_{0},\mu)\leq \liminf_{k\to\infty}\mathcal{T}(\varrho_{k},\mu)<\infty.\label{LSC-Of-T}\end{equation}
By Theorem \ref{[Vil08, Theorem 5.10]}, there exists a function $\varphi_{1}$ such that
$$\mathcal{T}(\varrho_{0},\mu)=\int_{\mathbb{R}^{n}}\varphi_{1}(x)\,d\varrho_{0}(x)+\int_{\mathbb{R}^{n}}\varphi_{1}^{*}(x)\,d\mu(x)<\infty.$$
Consequently, $\int_{\mathbb{R}^{n}}\varphi_{1}\,d\varrho_{0}<\infty$  and $\varphi_{1}$ is coercive, as otherwise $\varphi_{1}^{*}=\infty$ in a half space. Therefore, there exist positive numbers $R$ and $c_{1}$ such that $\varphi_{1}(x)\geq c_{1}|x|$ whenever $|x|>R$. Thus
\begin{align*}\int_{\mathbb{R}^{n}}|x|\,d\varrho_{0}(x) &= \int_{\{|x|\leq R\}}|x|\,d\varrho_{0}(x)+\int_{\{|x|>R\}}|x|\,d\varrho_{0}(x)\\&\leq \int_{\{|x|\leq R\}}|x|\,d\varrho_{0}(x)+\frac{1}{c_{1}}\int_{\{|x|> R\}}\varphi_{1}(x)\,d\varrho_{0}(x)<\infty.\end{align*}

By \eqref{LSC-Of-T}, to conclude the existence of a minimizer for problem (\ref{P}), it suffices to prove that
    \begin{equation}\label{LimInfOfF(RhoK)}
        \liminf _{k\to\infty}\mathcal{F}_{\alpha}(\varrho_{k})\geq \mathcal{F}_{\alpha}(\varrho_{0}).
    \end{equation}
Let $g(x):=\mathbf{1}_{\{|x|\leq 1\}} |x|^{ \alpha_1 } +\mathbf{1}_{\{ |x|> 1\}} |x|^{ \alpha_2}  $, and  $\beta:=-\alpha(1-\alpha)^{\frac{1}{\alpha}-1}>0$. Note that
    \begin{align}\label{SplitF}
        \mathcal{F}_\alpha(\varrho)&=\int_{\mathbb{R}^n}\left(-\rho(x)^{\frac{1}{1-\alpha}}+\beta g^{\frac{1}{\alpha}}(x) +g(x) \rho(x)\right) d x +\int_{\mathbb{R}^{n}}g(x)\,d\varrho^{s}(x)\nonumber
        \\ & \quad -\beta \int_{\mathbb{R}^n} g^{\frac{1}{\alpha}}(x)\,d x-\int_{\mathbb{R}^n} g(x)\,d\varrho(x).
    \end{align}  By \eqref{alpha1alpha2}, the term $\int_{\mathbb{R}^{\mathrm{n}}} g^{\frac{1}{\alpha}}(x)\,d x$ is finite and independent of $\varrho$. We now prove that 
\begin{align}
   \lim_{k\to \infty}\int_{\mathbb{R}^{n}}g(x)\,d\varrho_{k}(x) = \int_{\mathbb{R}^{n}}g(x)\,d\varrho_{0}(x). \label{limit-g-rho-k} 
\end{align} 
For any $M_1>1$, 
    \begin{align}\label{Continuity-Of-Int-g}&\left |\int_{\mathbb{R}^{n}}g(x) \,d\varrho_{k}(x) - \int_{\mathbb{R}^{n}}g(x)\,d\varrho_{0}(x)\right| \nonumber 
\\ & \leq \  \left |\int_{\mathbb{R}^{n}}\min\{g,M_1 \}\,d(\varrho_{k}-\varrho_0)\right|+\int_{\{|x|^{\alpha_{2}}>M_1 \}}|x|^{\alpha_{2}}\,d(\varrho_{k}+\varrho_0)\nonumber
\\ & \leq\  \left |\int_{\mathbb{R}^{n}}\min\{g,M_1 \}\,d(\varrho_{k}-\varrho_0)\right|+\int_{\{|x|^{\alpha_{2}}>M_1 \}}\frac{|x|}{M_1 ^{(1-\alpha_{2})/\alpha_{2}}}\,d(\varrho_{k}+\varrho_0)\nonumber
\\& \leq\  \left |\int_{\mathbb{R}^{n}}\min\{g,M_1\}\,d(\varrho_{k}-\varrho_0)\right|+\frac{1}{M_1^{(1-\alpha_{2})/\alpha_{2}}}\left( \int_{\mathbb{R}^{n}}|x| \,d\varrho_{k}(x)+\int_{\mathbb{R}^{n}}|x| \,d\varrho_0(x) \right)\end{align}
    Note that the sequence $\int_{\mathbb{R}^n}|x| \,d \varrho_k$ is bounded above by $M$. For each fixed $M_1>1$, after taking the upper limit as $k\rightarrow \infty$ in \eqref{Continuity-Of-Int-g},  we get $$
\limsup_{k\to\infty}\left|\int_{\mathbb{R}^n} g(x) \,d \varrho_k(x)-\int_{\mathbb{R}^n} g(x) \,d \varrho_0(x)\right| \leq \frac{1}{M_1^{(1-\alpha_{2})/\alpha_{2}}}\left( M+\int_{\mathbb{R}^{n}}|x|\,d\varrho_0(x) \right), 
$$  where  we have used the weak convergence of $\varrho_k$ to $\varrho_0.$ Now let $M_1 \to \infty$, we arrive at $$
\lim_{k\to\infty}\left|\int_{\mathbb{R}^n} g(x) \,d \varrho_k(x)-\int_{\mathbb{R}^n} g(x) \,d \varrho_0(x)\right| =0.
$$ That is, \eqref{limit-g-rho-k} holds.

    Finally, we prove that 
$$
\mathcal{G}_{\alpha}(\varrho):=\int_{\mathbb{R}^n}\left(-\rho(x)^{\frac{1}{1-\alpha}}+\beta g^{\frac{1}{\alpha}}(x)+g(x) \rho(x)\right) d x +\int_{\mathbb{R}^{n}}g(x)\,d\varrho^{s}(x)
$$
is lower semi-continuous. For any compact set $K$, let
$$
\mathcal{G}_{\alpha,K}(\varrho):= \int_{K}\left(-\rho(x)^{\frac{1}{1-\alpha}}+\beta g^{\frac{1}{\alpha}}(x)+g(x) \rho(x)\right) d x +\int_{K}g(x)\,d\varrho^{s}(x).
$$
By \eqref{Young'sIneq}, the function $-\rho(x)^{\frac{1}{1-\alpha}}+\beta g^{\frac{1}{\alpha}}(x)+g(x) \rho(x)$ is nonnegative. By the monotone convergence theorem, we have
    $$
\mathcal{G}_{\alpha}(\varrho) = \sup_{K\subset \mathbb{R}^{n},K \text{ is compact}} \mathcal{G}_{\alpha,K}(\varrho).
$$
It follows from \cite[Remark 3 on Page 413]{BV88} that   $\mathcal{G}_{\alpha,K}$ is lower semi-continuous in $\mathcal{P}_{1}(\mathbb{R}^{n})$. Therefore, $\mathcal{G}_{\alpha}$ is also lower semi-continuous in $\mathcal{P}_{1}(\mathbb{R}^{n})$ and thus, 
\begin{align*}
        \liminf _{k\to\infty}\mathcal{G}_{\alpha}(\varrho_{k})\geq \mathcal{G}_{\alpha}(\varrho_{0}).
    \end{align*} Together with \eqref{SplitF} and \eqref{limit-g-rho-k},   \eqref{LimInfOfF(RhoK)} is proved. As the sequence $\varrho_k$ is the limiting sequence of problem (\ref{P}) and $\varrho_0\in \mathcal{P}_1(\R^n)$, $\varrho_0$ is a solution to   problem (\ref{P}).   \end{proof}

    Now we prove that the solutions to our Minkowski problems, if exist, must have essentially continuous base functions. The idea of the proof originates from \cite[Theorem 4.3]{San16}.

\begin{proposition}\label{EssentialContinuity}
    Let $-\frac{1}{n}<\alpha<0$ and $\mu\in\mathcal{P}_1(\Rn)$ be such that  the barycenter of $\mu$ is $o$ and $\mu$ is not supported in any hyperplane. Suppose that $$\varrho_0 =\rho_{0}\,dx+\varrho_{0}^{s}= (1-\alpha\varphi_0)^{\frac{1}{\alpha}-1}\,dx + \varrho_{0}^{s},$$ with $\varphi_{0}\geq \frac{1}{\alpha}$, is the solution to (\ref{P}). Then, $\varphi_{0}$ is essentially continuous. In particular, the spherical surface area measure of $\varrho_{0}$ is identically $0$.
\end{proposition}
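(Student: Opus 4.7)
The plan is to argue by contradiction, following the approach in [San16, Theorem 4.3] adapted to the $\alpha$-concave setting. Suppose that $\varphi_{0}$ is not essentially continuous, so that the Borel set
\[
A \; := \; \{x \in \partial K : \varphi_{0}(x) < +\infty\}, \qquad K := \overline{\mathrm{dom}\,\varphi_{0}},
\]
satisfies $\mathcal{H}^{n-1}(A) > 0$. Since $\varphi_{0}$ is LSC and convex, this means that $\varphi_{0}$ admits a finite radial limit from $\mathrm{int}(K)$ at $\mathcal{H}^{n-1}$-a.e.\ point of $A$, and hence the density $\rho_{0} = (1-\alpha\varphi_{0})^{1/\alpha-1}$ has a strictly positive inner limit along the normal direction at $\mathcal{H}^{n-1}$-a.e.\ point of $A$. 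The goal is then to construct a competitor $\bar\varrho_{\varepsilon} \in \mathcal{P}_{1}(\mathbb{R}^{n})$ such that
\[
G(\bar\varrho_{\varepsilon}) < G(\varrho_{0}), \qquad G(\varrho) := (1-\alpha)\mathcal{F}_{\alpha}(\varrho) - \alpha\mathcal{T}(\varrho,\mu),
\]
contradicting the minimality of $\varrho_{0}$ established in Proposition \ref{Existence}.

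The competitor is built by shrinking the support of the absolutely continuous part. For $\varepsilon > 0$, set $K_{\varepsilon} := \{x \in K : \mathrm{dist}(x,\mathbb{R}^{n}\setminus K) \geq \varepsilon\}$, and let $\tilde\varphi_{\varepsilon}$ be the LSC convex function equal to $\varphi_{0}$ on $K_{\varepsilon}$ and $+\infty$ outside $K_{\varepsilon}$. By Proposition \ref{PropertyOfMinimizer}, $\mathrm{Supp}(\varrho_{0}^{s}) \subset \{\varphi_{0}=1/\alpha\}$, which lies inside $\mathrm{int}(K_{\varepsilon})$ for all sufficiently small $\varepsilon$; we may therefore define
\[
\tilde\varrho_{\varepsilon} \; := \; (1-\alpha\tilde\varphi_{\varepsilon})^{1/\alpha-1}\,dx + \varrho_{0}^{s}, \qquad \bar\varrho_{\varepsilon} \; := \; \tilde\varrho_{\varepsilon}/|\tilde\varrho_{\varepsilon}| \in \mathcal{P}_{1}(\mathbb{R}^{n}).
\]
A coarea/slicing argument, carried out in tubular coordinates around $\mathcal{H}^{n-1}$-a.e.\ point of $A$, yields a strictly positive lower bound of order $\varepsilon$ on the discarded mass
$m_{\varepsilon} := \int_{K\setminus K_{\varepsilon}} (1-\alpha\varphi_{0})^{1/\alpha-1}\,dx$,
with an explicit constant proportional to $\int_{A}\bar\rho_{0}\,d\mathcal{H}^{n-1}$, where $\bar\rho_{0}:=(1-\alpha\varphi_{0})^{1/\alpha}$.

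The remaining step is to expand $G(\bar\varrho_{\varepsilon}) - G(\varrho_{0})$ in $\varepsilon$. Writing
\[
\mathcal{F}_{\alpha}(\bar\varrho_{\varepsilon}) \; = \; -\,|\tilde\varrho_{\varepsilon}|^{-1/(1-\alpha)}\int_{K_{\varepsilon}}\bar\rho_{0}\,dx
\]
and tracking the renormalization $|\tilde\varrho_{\varepsilon}|^{-1/(1-\alpha)} = 1 + m_{\varepsilon}/(1-\alpha) + o(m_{\varepsilon})$ produces a first-order boundary contribution to $(1-\alpha)\mathcal{F}_{\alpha}$ of definite sign. For the transport term, an upper bound $\mathcal{T}(\bar\varrho_{\varepsilon},\mu) - \mathcal{T}(\varrho_{0},\mu) = O(\varepsilon)$ follows by constructing an explicit admissible plan: compose the $\varrho_{0}$-optimal plan with an inward-normal displacement that absorbs the boundary mass into $K_{\varepsilon}$, and apply the dual formulation \eqref{def-T-2}. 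The main obstacle is the delicate quantitative comparison at first order: one must verify that the strictly positive boundary gain in $(1-\alpha)\mathcal{F}_{\alpha}$, driven by $\mathcal{H}^{n-1}(A) > 0$ and the strict positivity of the inner limit of $\bar\rho_{0}$ on $A$, dominates the $O(\varepsilon)$ cost of $-\alpha\mathcal{T}(\cdot,\mu)$ after accounting for the renormalization correction; it is precisely here that the hypothesis of failed essential continuity enters irreducibly, producing the desired contradiction and concluding $\varphi_{0}$ is essentially continuous, hence the spherical surface area measure of $\varrho_{0}$ vanishes.
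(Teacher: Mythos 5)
Your overall strategy (contradiction plus an explicit competitor in the minimization problem \eqref{P}) is the right one, but the competitor you choose does not work, and the step you yourself flag as ``the main obstacle'' is precisely where the argument breaks down. Your perturbation deletes the mass of $\rho_0\,dx$ in the boundary strip $K\setminus K_\varepsilon$ and renormalizes. Since $\mathcal{F}_\alpha(\varrho)=-\int\rho^{1/(1-\alpha)}$ with exponent $p:=\frac{1}{1-\alpha}\in(0,1)$, this functional \emph{rewards} spreading mass and \emph{penalizes} removing it from a region of positive density: at first order,
\begin{align*}
(1-\alpha)\bigl[\mathcal{F}_\alpha(\bar\varrho_\varepsilon)-\mathcal{F}_\alpha(\varrho_0)\bigr]
\;\approx\;(1-\alpha)\int_{K\setminus K_\varepsilon}\rho_0^{\,p}\,dx\;-\;m_\varepsilon\int_{K}\rho_0^{\,p}\,dx ,
\end{align*}
where the first (unfavourable) term comes from the lost strip and the second from the renormalization. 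Both are of order $\varepsilon$, and there is no reason the second dominates; in fact, if the boundary trace of $\rho_0$ equals a small constant $a$ on the bad set of $\mathcal{H}^{n-1}$-measure $\sigma$, the loss scales like $\varepsilon\sigma a^{p}$ while the gain scales like $\varepsilon\sigma a\cdot\int_K\rho_0^{p}$, so for small $a$ the competitor is strictly \emph{worse} even though essential continuity fails. The transport term cannot rescue this: you only claim an $O(\varepsilon)$ bound on its change, with no favourable sign, and $-\alpha\mathcal{T}$ enters $G$ with a positive coefficient. So the ``strictly positive boundary gain in $(1-\alpha)\mathcal{F}_\alpha$'' that your contradiction needs is unsubstantiated and generically false; the proposal is also incomplete on its own terms, since that comparison is left as something ``one must verify.''

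The paper's proof perturbs in the opposite way: it keeps the total mass fixed and \emph{splits} the mass of a thin layer $S_\varepsilon$ just inside the bad boundary piece into two halves, translating one half by $-\varepsilon e_n$ across the boundary into the region where $\rho_0=0$. Because $t\mapsto t^{p}$ is strictly concave, this produces an exact decrease $\mathcal{F}_\alpha(\varrho_\varepsilon)=\mathcal{F}_\alpha(\varrho_0)-\bigl(2^{\frac{-\alpha}{1-\alpha}}-1\bigr)\int_{S_\varepsilon}\rho_0^{\,p}\,dx\le\mathcal{F}_\alpha(\varrho_0)-c'c_1\varepsilon$, where the uniform lower bound $\rho_0\ge c_0$ on $S_\varepsilon$ is obtained from a local graph representation of the boundary and convexity of $\varphi_0$ (after first showing, via Baire category, that $\mathrm{dom}\,\varphi_0$ has nonempty interior). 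The transport cost of the outward displacement is then controlled not by composing plans but by the dual formulation with the perturbed potential $\varphi_\delta=(\varphi_0^*+\delta\psi)^*$ and the choice $\delta=\beta\varepsilon$, which yields an increase of at most $-\alpha\beta\varepsilon\int\psi\,d\mu-\tfrac{\alpha\beta}{2}\varepsilon\,\psi^*(-e_n/\beta)\int_{S_\varepsilon}\rho_0\,dx$; since $\int_{S_\varepsilon}\rho_0\,dx\to0$ as $\varepsilon\to0$, the order-$\varepsilon$ concavity gain wins and gives the contradiction. If you want to salvage your write-up, you should replace the truncation-and-renormalization competitor by this mass-splitting (or an equivalent spreading) construction; the deletion idea cannot detect the failure of essential continuity for this functional.
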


\begin{proof}
    Let $\Omega=\dom\varphi_{0}$. To prove that $\varphi_{0}$ is essentially continuous, i.e., continuous $\mathcal{H}^{n-1}$-a.e. on $\partial \Omega$, it is sufficient to show that $\rho_{0}=0$ $\mathcal{H}^{n-1}$-a.e. on $\partial \Omega$. Indeed, let $x_{0}\in \partial \Omega$. By virtue of lower semi-continuity of $\varphi_{0}$, we have
    $$\limsup_{x\to x_{0}}\rho_{0}(x) \leq \rho_{0}(x_{0}).$$
If $\rho_{0}(x_{0}) = 0$, then obviously $$\liminf_{x\to x_{0}}\rho_{0}(x)\geq 0 = \rho_{0}(x_{0}).$$ Hence $\rho_{0}$ and $\varphi$ are continuous at $x_{0}$.

For any $t\in \R$, define the sublevel set of $\varphi_0$ by $$[\varphi_0 ]_t=\{x\in \Rn: \varphi_0 (x)\leq   t\}.$$ 
Suppose that, for any $t\in \R$, the interior of $[\varphi_0]_t$ is empty. As $\varphi_0$ is lower semi-continuous, $[\varphi_0]_t$ is closed and  $\Omega=\cup_{t\in \mathbb{Z}}  [\varphi_0]_t$. Thus, $\Omega$ must have an empty interior by Baire category theorem.  Because $\Omega$ is convex, it should be contained in some hyperplane. Hence $\{\rho > 0\}$ is contained in some hyperplane, i.e. $\rho(x)=0$ for $\mathcal{L}^{n}$-a.e. $x$, which is a contradiction to the fact that $\varrho_{0}\neq \varrho_{0}^{s}$.

Without loss of generality, we can then assume that there exists $t_0$ such that $[\varphi_0]_{t_0}$ has a nonempty interior. Hence, for any $t>t_0$,  $\{\varphi<t\} $  must have non-empty interiors as well. Moreover, $\{\varphi<t\}$ must be  a convex open  set. 

Assume the opposite to our claim, namely,  there exist a constant $a>0$ and a set $S\subset \partial \Omega$ of positive $\mathcal{H}^{n-1}$- measure, such that, $\rho_{0}\geq a>0$   on $S$. Shrinking  $S$, if necessary, and  using the convexity of $\Omega,$ we see that, locally around some point $x_0\in S,$ the set $S$ can be  represented as the graph of a convex function. Without loss of generality, we can assume $x_0=o$ and  
 $$S = \{(x',h(x')):x'\in U\}, $$ for a convex function $h:\R^{n-1}\rightarrow [0,\infty)$, where $U$ is a neighborhood of $o\in \R^{n-1}$ and   $h(o)=0$.  
 Let $S_{\varepsilon}:=\{(x',h(x')+\delta):x'\in U,0< \delta\leq\varepsilon\}$. Shrinking $U$ if necessary, we can assume $S_{\varepsilon_{0}}\subset \text{int }\Omega$ for some small $\varepsilon_{0}$. There is   $S'\subset S+\varepsilon_{0}e_{n}$, which takes  the form $$S'=\{(x',h(x')+\varepsilon_{0}):x'\in V\}$$ with $V$  a relative open subset of $U$ such that $\overline{V}\subset U$. Hence,  $\varphi_0$ is bounded from above on $S'$.  Because $\{\varphi_{0}<t\}$ is open for $t$ large and $\varphi_{0}$ is finite on $S$, there exists $t'$ such that $\{\varphi_0 <t'\}\cap (S+\varepsilon_{0}e_{n})$ is a relative open subset of $S+\varepsilon_{0}e_{n}$. 
 
 By shrinking $S$ again, if necessary, we may assume that $U=V$ and $$S'=S+\varepsilon_{0}e_{n}\subset \{\varphi_{0}<t'\}.$$ Then, $\varphi_0$ is bounded from above on $S+\varepsilon_{0}e_{n}$. Due to the convexity of $\varphi_{0}$, the boundedness of $\varphi_{0}$ on $S$ and $S+\varepsilon_{0}e_{n}$ yield the boundedness of $\varphi_{0}$ on $S_{\varepsilon_{0}}$. Thus, there exists a constant $c_{0}>0$ such that  $\rho_{0} \geq c_{0}$ on $S_{\varepsilon}$ for any $0<\varepsilon<\varepsilon_{0}$.
  Therefore,
    \begin{equation} \int_{S_{\varepsilon}}\rho_{0}^{\frac{1}{1-\alpha}}\,dx\geq c_{0}^{\frac{1}{1-\alpha}} \int_{S_{\varepsilon}}\,dx = c_{0}^{\frac{1}{1-\alpha}} \int_{U}\int_{h(x')}^{h(x')+\varepsilon}\,dx_{n}\,dx' = c_{0}^{\frac{1}{1-\alpha}} \mathcal{L}^{n-1}(U)\varepsilon:=c_{1}\varepsilon.\label{LocalBoundOfRho}\end{equation} To find a contradiction, let $T^{\varepsilon}(x):=(x',x_{n}-\varepsilon)$ for $x=(x', x_n)$ and 
    $$
    \varrho_{\varepsilon}:= \varrho_{0}\mres{S_{\varepsilon}^{c}} + \frac{1}{2}\Big(\rho_{0}\,dx\mres{S_{\varepsilon}}\Big)+\frac{1}{2}\Big(T^{\varepsilon}_{\sharp}(\rho_{0}\,dx\mres{S_{\varepsilon}})\Big) +\varrho^s_{0}\mres{S_{\varepsilon}},
    $$ where $ S_{\varepsilon}^{c} =\Rn\setminus  S_{\varepsilon}$. Note that the measure $T^{\varepsilon}_{\sharp}(\rho_{0}\,dx\mres{S_{\varepsilon}})$ has its density function again equal to $\rho_0\big((T^{\varepsilon})^{-1}(\cdot)\big)$ on the set $T^{\varepsilon}(S_{\varepsilon})$, where $(T^{\varepsilon})^{-1}$ denotes the preimage of the map $T^{\varepsilon}$. Therefore, put $c'= \big( 2^{\frac{-\alpha}{1-\alpha}}-1\big),$ we have\begin{align*}\mathcal{F_{\alpha}}(\varrho_{\varepsilon})&= -\int_{S_{\varepsilon}^{c}}\rho_{0}^{\frac{1}{1-\alpha}}(x)\,dx-\! \int_{S_{\varepsilon}}\! 2^{-\frac{1}{1-\alpha}}\rho_{0}^{\frac{1}{1-\alpha}}(x)\,dx-\! \int_{T^{\varepsilon}(S_{\varepsilon})} \!\!  2^{-\frac{1}{1-\alpha}}\rho_0\big((T^{\varepsilon})^{-1}(x)\big)^{\frac{1}{1-\alpha}}\,dx\\
    &=-\int_{S_{\varepsilon}^{c}}\rho_{0}^{\frac{1}{1-\alpha}}(x)\,dx-\int_{S_{\varepsilon}}2^{\frac{-\alpha}{1-\alpha}}\rho_{0}^{\frac{1}{1-\alpha}}(x)\,dx\\
    &=-\int_{\Omega}\rho_{0}^{\frac{1}{1-\alpha}}(x)\,dx-\big( 2^{\frac{-\alpha}{1-\alpha}}-1\big) \int_{S_{\varepsilon}} \rho_{0}^{\frac{1}{1-\alpha}}(x)\,dx \\ 
    &=\mathcal{F_{\alpha}}(\varrho_{0})-c'\int_{S_{\varepsilon}} \rho_{0}^{\frac{1}{1-\alpha}}(x)\,dx \numberthis\label{EstimateF}.\end{align*}
    
     Next, we find an upper bound for $\mathcal{T}\left(\varrho_{\varepsilon}, \mu\right)$. Let $\psi$ be a fixed convex, positive function, such that $\int_{\Rn}\psi\,d\mu<\infty$ and $\psi^{*}$ is finite on $\Rn$. Such a function $\psi$ exists due to the fact that $\mu\in \mathcal{P}_{1}(\mathbb{R}^{n})$. Let $\delta>0$ be a positive number, and  $\varphi_{ \delta}:=(\varphi_{ 0}^{*}+\delta\psi)^{*}$. For $x\in S_{\varepsilon}$, it follows from \eqref{legendre-tran} that 
    \begin{align*}\varphi_{ \delta}\big(T^{\varepsilon}(x)\big) &=\sup_{z\in\Rn} \Big\{\langle z, T^{\varepsilon}(x)\rangle -\varphi_{ \delta}^*(z)\Big\} \\&=\sup_{z\in\Rn} \Big\{\langle z, x-\varepsilon e_n \rangle -\varphi_{ 0}^*(z)-\delta \psi(z)\Big\}\\&\leq \sup_{z\in\Rn} \Big\{\langle z, x \rangle -\varphi_{ 0}^*(z) \Big\} +\delta  \sup_{z\in\Rn} \Big\{\Big\langle z,  \frac{-\varepsilon e_n}{\delta} \Big\rangle  -\psi(z)\Big\}  \\
    &= \varphi_{ 0}(x)+\delta\psi^{*}\Big( \frac{-\varepsilon e_{n}}{\delta}\Big),\end{align*}
    where $\{e_{i}\}_{1\leq i\leq n}$ is the standard basis of $\mathbb{R}^{n}$. As the Legendre transform is order reversing, we have
     $\varphi_{ \delta}\leq \varphi_{ 0}$ in  $\Rn.$ This further implies  
    \begin{align*}\mathcal{T}(\varrho_{\varepsilon},\mu)&\leq \int_{\Rn}\varphi_{ \delta}\,d\varrho_{\varepsilon}+\int_{\Rn}(\varphi_{ 0} ^{*}+\delta\psi)\,d\mu\\
    &\leq\int_{S_{\varepsilon}^{c}}\!\varphi_{ 0} \,d\varrho_{0}+\frac{1}{2}\! \int_{S_{\varepsilon}}\!\varphi_{ 0} \rho_0\,dx +\frac{1}{2}\!\int_{T^{\varepsilon}(S_{\varepsilon})}\varphi_{ \delta} \rho_0\big((T^{\varepsilon})^{-1}(x)\big) \,dx \\ &\ \  +\int_{S_{\varepsilon}}\!\,d\varrho^s_{0}+  \int_{\Rn}\!\varphi^{*}_{0} \,d\mu+\delta\int_{\Rn}\! \psi \,d\mu\\
    &=\!\int_{\Rn}\! \varphi_{ 0} \,d\varrho_{0}\!+\!\int_{\Rn}\!\varphi_{ 0} ^{*}\,d\mu \!-\! \frac{1}{2}\!\int_{S^{\varepsilon}}\! \varphi_0 \,d\rho_{0}\!+\!\frac{1}{2}\!\int_{S_{\varepsilon}}\!\Big(\!\varphi_{\delta}\circ T^{\varepsilon}\!\Big)(x)\rho_{0}(x)\,dx \!+\!\delta\!\int_{\Rn}\! \psi \,d\mu\\
    &= \mathcal{T}(\rho_{0},\mu) +\delta\!\int_{\Rn}\! \psi \,d\mu - \frac{1}{2}\!\int_{S^{\varepsilon}}\! \varphi_0 \,d\rho_{0}+\frac{1}{2}\!\int_{S_{\varepsilon}}\!\Big(\!\varphi_{\delta}\circ T^{\varepsilon}\!\Big)(x)\rho_{0}(x)\,dx \\
    &\leq \mathcal{T}(\rho_{0},\mu)+\delta\int_{\Rn}\psi \,d\mu+\frac{\delta}{2} \psi^{*}\left( \frac{-\varepsilon e_{n}}{\delta}\right)\int_{S_{\varepsilon}}\rho_{0}(x)\,dx .\numberthis\label{EstimateT} 
    \end{align*} Combining (\ref{LocalBoundOfRho}), (\ref{EstimateF}), and (\ref{EstimateT})   with  the fact that $\varrho_{0}$ solves \eqref{P},   we get
    \begin{align*}
         (1\!-\!\alpha)\mathcal{F}_{\alpha}(\varrho_{0})\!-\! \alpha\mathcal{T}(\varrho_{0},\mu) 
        &\leq  (1\!-\!\alpha)\mathcal{F}_{\alpha}(\varrho_{\varepsilon})\!-\! \alpha\mathcal{T}(\varrho_{\varepsilon},\mu)\\
        & \leq\  (1\!-\!\alpha)\mathcal{F}_{\alpha}(\varrho_{0})-c' (1-\alpha)\int_{S_{\varepsilon}} \rho_0^{\frac{1}{1-\alpha}}(x) d x\\&\ \ -\alpha\bigg(\!\mathcal{T}(\rho_{0},\mu)\!+\!\delta\!\int_{\Rn}\! \psi \,d\mu \!+\! \frac{\delta}{2} \psi^{*}\Big(\! \frac{-\varepsilon e_{n}}{\delta}\!\Big)\!\int_{S_{\varepsilon}}\! \rho_{0}(x)\,dx\bigg)\\
        &  \leq\  (1-\alpha)\mathcal{F}_{\alpha}(\varrho_{0})-c'c_{1} (1-\alpha)\varepsilon\\&\ \ -\alpha\bigg(\!\mathcal{T}(\rho_{0},\mu)\!+\!\delta\!\int_{\Rn}\! \psi \,d\mu \!+\! \frac{\delta}{2} \psi^{*}\Big(\! \frac{-\varepsilon e_{n}}{\delta}\!\Big)\!\int_{S_{\varepsilon}}\! \rho_{0}(x)\,dx\bigg). 
    \end{align*}
    This estimate entails
    \begin{align}c'c_{1}  (1-\alpha)&\leq -\frac{\alpha\delta}{\varepsilon}\int_{\Rn}\psi \,d\mu-\frac{\alpha \delta}{2\varepsilon} \psi^{*}\Big( \frac{-\varepsilon e_{n}}{\delta}\Big)\int_{S_{\varepsilon}}\rho_{0}(x)\,dx \nonumber \\ &=- \alpha\beta \int_{\Rn}\psi \,d\mu-\frac{\alpha \beta}{2 } \psi^{*}\Big( \frac{- e_{n}}{\beta}\Big)\int_{S_{\varepsilon}}\rho_{0}(x)\,dx  \label{LastEstimate},\end{align} where $\delta$ is chosen to be $\delta=\beta\varepsilon$ for some positive constant $\beta$ so that  $$-\alpha \beta \int_{\Rn}\psi \,d\mu< c'c_1 (1-\alpha).$$  Let $\varepsilon\to 0$ in \eqref{LastEstimate}, then by monotone convergence theorem, 
    $$\int_{S_\varepsilon} \rho_0 (x) d x\to 0,$$
and we arrives at a contradiction. Thus, $\varphi_{0}$ is essentially continuous.
\end{proof}
 Our main result in this section is summarized in the following theorem. 
\begin{theorem}\label{sol-measure-measure}
Let $-\frac{1}{n}<\alpha<0$ and $\mu\in \mathcal{P}_{1}(\mathbb{R}^{n})$. Assume that  the barycenter of $\mu$ is $o$ and $\mu$ is not supported in any hyperplane. Then, there exists an $\alpha$-concave measure $$\bar\varrho = (1-\alpha\varphi_{0})^{\frac{1}{\alpha}}\,dx + \varrho^{s}_{0},$$ such that $\mu$ is the Euclidean surface area measure of $\bar{\varrho}$ and the spherical surface area measure of $\bar\varrho$ is the zero measure.
\end{theorem}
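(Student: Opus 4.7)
The plan is to combine the three technical propositions proved earlier in this section with the Knott-Smith optimality criterion. The strategy proceeds as follows.

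First, I would invoke Proposition \ref{Existence} to produce a minimizer $\varrho_0 \in \mathcal{P}_1(\mathbb{R}^n)$ of the variational problem \eqref{P}; the assumptions on $\mu$ (first moment finite, barycenter at $o$, not concentrated on a hyperplane) are exactly the hypotheses needed there. Proposition \ref{PropertyOfMinimizer} then extracts the structural information that $\varrho_0 = (1-\alpha\varphi_0)^{\frac{1}{\alpha}-1}\,dx + \varrho_0^s$ for some lower semi-continuous convex $\varphi_0 \geq \frac{1}{\alpha}$, with $\varrho_0^s \perp \mathcal{L}^n$ concentrated on $\{\varphi_0 = \frac{1}{\alpha}\}$, together with the crucial duality identity
$$\mathcal{T}(\varrho_0,\mu) = \int_{\mathbb{R}^n}\varphi_0\,d\varrho_0 + \int_{\mathbb{R}^n}\varphi_0^*\,d\mu.$$

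Next, I would define $\bar\varrho := (1-\alpha\varphi_0)^{\frac{1}{\alpha}}\,dx + \varrho_0^s$, the candidate $\alpha$-concave measure in the statement. The key algebraic observation is that its absolutely continuous density $\bar\rho = (1-\alpha\varphi_0)^{\frac{1}{\alpha}}$ satisfies $\bar\rho^{1-\alpha} = (1-\alpha\varphi_0)^{\frac{1-\alpha}{\alpha}} = (1-\alpha\varphi_0)^{\frac{1}{\alpha}-1}$, so the reference measure $\bar\rho^{1-\alpha}\,dx + \bar\varrho^s$ appearing in Definition \ref{def-measure-measure} is identical to the minimizer $\varrho_0$ itself. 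With this identification in hand, the only remaining task is to exhibit a coupling $\pi \in \Pi(\varrho_0,\mu)$ supported on $\mathrm{Graph}(\partial\varphi_0)$. This is delivered by the Knott-Smith criterion (Theorem \ref{[Vil08, Theorem 5.10]}): finiteness of $\mathcal{T}(\varrho_0,\mu)$ is built into the minimality of $\varrho_0$, and $\varphi_0$ is precisely the convex lower semi-continuous potential realizing the dual problem, per the displayed identity above. Feeding such $\pi$ into Definition \ref{def-measure-measure} verifies that $\mu$ is a Euclidean surface area measure of $\bar\varrho$.

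Finally, Proposition \ref{EssentialContinuity} guarantees that $\varphi_0$ is essentially continuous, which forces $\bar\rho \equiv 0$ $\mathcal{H}^{n-1}$-a.e.\ on $\partial K_{\bar\rho}$, so that the spherical surface area measure $(\nu_{K_{\bar\rho}})_\sharp\bigl(\bar\rho\,d\mathcal{H}^{n-1}|_{\partial K_{\bar\rho}}\bigr)$ is the zero measure, as required.

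The principal conceptual point—and the step where one must be careful rather than computationally laborious—is the choice of the functional $(1-\alpha)\mathcal{F}_\alpha(\varrho) - \alpha\mathcal{T}(\varrho,\mu)$ and the bookkeeping matching the exponent: the Euler-Lagrange analysis of Proposition \ref{PropertyOfMinimizer} produces a density of the form $(1-\alpha\varphi_0)^{\frac{1}{\alpha}-1}$, which must be reinterpreted as $\bar\rho^{1-\alpha}$ for the $\alpha$-concave candidate $\bar\rho = (1-\alpha\varphi_0)^{\frac{1}{\alpha}}$. Once this identification is made and the essential continuity is invoked, the theorem follows by a one-line application of Knott-Smith.
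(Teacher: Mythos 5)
Your proposal is correct and follows essentially the same route as the paper: existence of a minimizer of \eqref{P} via Proposition \ref{Existence}, its structure and the duality identity via Proposition \ref{PropertyOfMinimizer}, the coupling on $\mathrm{Graph}(\partial\varphi_0)$ via the Knott--Smith criterion (Theorem \ref{[Vil08, Theorem 5.10]}), and vanishing of the spherical measure via Proposition \ref{EssentialContinuity}. Your explicit remark that $\bar\rho^{1-\alpha}\,dx+\varrho_0^{s}$ coincides with the minimizer $\varrho_0$ is exactly the (implicit) exponent bookkeeping in the paper's argument.
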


    \begin{proof} By Propositions \ref{PropertyOfMinimizer} and   \ref{Existence}, there exists a solution $$\varrho_{0} = \big(1-\alpha\varphi_{0}\big)^{\frac{1}{\alpha}-1}+\varrho_{0}^{s}$$ to the minimization problem \eqref{P} such that, $\varphi_{0}\geq \frac{1}{\alpha}$,   $\mathrm{Supp}(\varrho_{0}^{s})\subset \left\{\varphi_{0}=\frac{1}{\alpha}\right\}$, and 
        $$\mathcal{T}(\varrho_{0},\mu)=\int_{\mathbb{R}^{n}}\varphi_{0}\, \,d\varrho_{0}+\int_{\mathbb{R}^{n}}\varphi_{0}^*\, \,d\mu.$$
Due to Theorem \ref{[Vil08, Theorem 5.10]}, there exists a measure $\pi$ on $\mathbb{R}^{n}\times \mathbb{R}^{n}$, whose marginals are $\varrho_{0}$ and $\mu$, respectively, such that 
$$\mathrm{Supp}(\pi)\subset \mathrm{Graph}(\partial \varphi). $$
Therefore, $\mu$ is the Euclidean surface area measure of $\bar{\varrho}$. On the other hand, Proposition \ref{EssentialContinuity} asserts that the spherical surface area measure of $\bar\varrho$ vanishes. This completes the proof.
\end{proof}

Note that, if $\inf\varphi>\frac{1}{\alpha},$ Theorem \ref{sol-measure-measure} provides a solution to Problem \ref{problem-E-FMP}. However, it is not clear when the solution $\varphi$ given in Theorem \ref{sol-measure-measure} does  satisfy $\inf \varphi >\frac{1}{\alpha}$, and we leave this investigation for future studies.

\vskip 2mm \noindent  {\bf Acknowledgement.}  
The research of XL has been supported by the Science and Technology Research Program of Chongqing Municipal Education Commission (No. KJQN202300557) and the Research Foundation of Chongqing Normal University (No. 20XLB012). The
research of DY has been supported by a NSERC grant, Canada.

\end{document}